
\documentclass[a4paper,11pt]{article}
\usepackage{amsmath,amssymb}
\usepackage{amsfonts,graphicx,latexsym}
\usepackage{tikz}
\usetikzlibrary{calc,positioning,scopes}
\usetikzlibrary{arrows}
\usepackage{float}
 
\usepackage{changes}



\addtolength{\topmargin}{-2.5cm}
\addtolength{\textheight}{4cm}
\addtolength{\oddsidemargin}{-2cm}
\addtolength{\evensidemargin}{-2cm}
\addtolength{\textwidth}{4cm}


\def\qed{$\Box$}                              

\newenvironment{proof}{\noindent{\sc
   Proof:}\enspace}{\hfill\hfill\qed\medskip}
 
 
 



 
 
\newcommand{\R}     {\mathbb{R}} 

\usepackage{mathtools, xparse}

\def\1{{\mathchoice {1\mskip-4mu\mathrm l}      
{1\mskip-4mu\mathrm l} 
{1\mskip-4.5mu\mathrm l} {1\mskip-5mu\mathrm l}}} 
 
\def\comment#1{} 
 
 
 
\newtheorem{theorem}{Theorem}[section] 
\newtheorem{Lemma}[theorem]{Lemma} 
\newtheorem{proposition}[theorem] {Proposition} 
\newtheorem{cor}[theorem]  {Corollary} 
\newtheorem{remark}[theorem]  {Remark}


\newcommand{\ERRW}{edge-reinforced random walk}
\newcommand{\VRRW}{vertex-reinforced random walk}
\newcommand{\FF}{\mbox{${\cal F}$}}
 

 
 
 
 

\newcommand{\GG}{\mbox{${\cal G}$}}
\newcommand{\nn}{{\bar{n}}}
\newcommand{\NN}{\mbox{${\cal N}$}}
\newcommand{\NNs}{{\cal N}}
\newcommand{\bN}{\mathbb{N}}

\DeclarePairedDelimiter{\norm}{\lVert}{\rVert}


%
 \sloppy 
 \parskip 0.8ex plus0.3ex minus0.2ex 
 \parindent1.0em

\begin{document}

\title{Edge- and vertex-reinforced random walks with super-linear reinforcement on infinite graphs
}

\date{}
\author{
Codina Cotar
\footnote{
University College London, Statistical Science Department,
London,
United Kingdom,
\texttt{c.cotar@ucl.ac.uk}}
\, and
Debleena Thacker
\footnote{
Mathematical Statistics, Centre for Mathematical Sciences, Lund University, Lund, Sweden
\texttt{thackerdebleena@gmail.com}}}

\maketitle

\begin{abstract}
In this paper we introduce  a new simple but powerful general technique for the study of edge- and vertex-reinforced processes with super-linear reinforcement, based on the use of order statistics for the number of edge, respectively of vertex, traversals. The technique relies on upper bound estimates for the number of edge traversals, proved in a different context by Cotar and Limic [\textit{Ann. Appl. Probab.} (2009)] for finite graphs with edge reinforcement.  We apply our new method both to edge- and to vertex-reinforced random walks with super-linear reinforcement on arbitrary infinite connected graphs of bounded degree. We stress that, unlike all previous results for processes with super-linear reinforcement, we make no other assumption on the graphs.

For edge-reinforced random walks, we complete the results of Limic and Tarr\`{e}s [\textit{Ann. Probab.} (2007)] and we settle a conjecture of Sellke [\textit{Technical Report 94-26, Purdue University} (1994)] by showing that for any reciprocally summable reinforcement weight function $w$, the walk traverses a random attracting edge at all large times.

For vertex-reinforced random walks, we extend results previously obtained on $\mathbb{Z}$ by Volkov [\textit{Ann. Probab.} (2001)] and by Basdevant, Schapira and Singh [\textit{Ann. Probab.} (2014)], and on complete graphs by Benaim, Raimond and Schapira [\textit{ALEA} (2013)]. We show that on any infinite connected graph of bounded degree, with reinforcement weight function $w$
taken from a general class of reciprocally summable reinforcement
weight functions, the walk traverses two random neighbouring attracting vertices at all large
times.
\end{abstract}

\smallskip
\noindent {\bf AMS 2000 subject classification:} 60G50, 60J10, 60K35 
\bigskip

\textit{Key words and phrases:}         
edge-reinforced random walk, vertex-reinforced random walk, super-linear (strong) reinforcement, attraction set, order statistics, Rubin's construction, bipartite graphs

\section{Introduction}
Let $\GG$ be a locally finite connected graph with the 
edge set $E(\GG)$ and the vertex set $V(\GG)$. We call any two vertices $u,v$ connected by an edge \textit{adjacent}
(or \textit{neighbouring}) vertices; in this case we
write $u\sim v$ and denote by $\{u,v\}=\{v,u\}$ the
edge connecting them.
We will denote by  $|E(\GG)|$
the number of edges of $\GG$, and by $|V(\GG)|$
the number of vertices of $\GG$.
We will denote by $D(\GG)=\sup_{v\in V({\scriptsize \GG})}\mbox{degree}(v)<\infty$ the
degree of $\GG$, where for any $v\in V(\GG)$, the degree$(v)$ equals the 
number of edges incident to $v$.

Finally, we will denote by ${\mathbb{P}}^{\GG}$ 
the law of the edge/vertex-reinforced random walk on $\GG$, and by ${\mathbb{E}}^{\GG}$ the corresponding expectation. 

We will next introduce the edge/vertex reinforced random walks.

\subsection{Edge-reinforced random walk}

Let $(\ell_0^e,\, e\in E(\GG))$ be given arbitrary real numbers with
$\ell_0^e\geq 0 $, $ e\in E(\GG)$; these are the initial edge weights. 
Given a \textit{reinforcement weight function} 
$w: [0,\infty) \mapsto (0,\infty)$,
the edge-reinforced random walk (ERRW)
on $\GG$ records nearest-neighbour step transitions of a particle in $V(\GG)$. That is:
\begin{itemize}
\item [(i)] if currently at vertex $v\in V(\GG)$, in the next step the particle jumps to a vertex $u\in V(\GG)$ adjacent to $v$.
\item [(ii)] the probability of a jump to $u$ is $w$-\textit{proportional} to the number of previous traversals of the edge $\{v,u\}$. 
\end{itemize}
The more formal definition is as follows. Denote by
$I_n$ the (\textit{random}) position of the {\ERRW} at time $n$. At the \textit{initial time} $t_0:=0$, the {\ERRW} is at the initial position $I_{t_0}\in V(\GG)$, and 
$\{I_n, I_{n+1}\} \in E({\GG})$ for all $n\geq t_0$.
Let $\FF_n$ be the filtration
\begin{equation}
\label{Efilt}
\FF_n=\sigma\{I_k,k=0,\ldots ,n, (\ell_0^e,e\in E({\GG}))\}.
\end{equation}
Moreover, the {\ERRW} follows the following rule for all $n\ge t_0$: 
\begin{equation}
\label{ruleerrw}
  {\mathbb{P}}^{\GG}(I_{n+1}=v|\FF_n)  1_{\{I_n=u \}}= \frac{w(X_n^{\{u,v\}})}{
  \sum_{y\sim u}w(X_n^{\{u,y\}})} 1_{\{I_n=u, u\sim v \}},
\end{equation}
where for any $e\in E(\GG)$ and for all $n\ge t_0+1$
\begin{equation}
\label{Esteps}
X_n^e = \ell_0^e + \sum_{i=t_0+1}^{n} 
1_{\{ \mbox{\small{$e$ was traversed at $i$th step}} \}} =
\ell_0^e + \sum_{i=t_0+1}^{n}
1_{\{ \{I_{i-1}, I_{i}\}=e \}} 
\end{equation}
equals the \textit{initial edge weight} $\ell_0^e$ incremented
by the total number of 
(undirected) traversals of edge $e$ up to time $n$.
Note that, whenever $|V(\GG)|< \infty$,
$\sum_{e\in E({\scriptsize \GG})}X_k^e = k+\sum_{e\in E(\GG)}\ell_0^e$ for all $k\geq 0$, almost surely.
The starting weights $X_{t_0}^e:=\ell_0^e$ are specified as 
deterministic above but one could use random variables instead
in applications, and the definition (\ref{Efilt}) accounts for this possibility.

\subsection{Vertex-reinforced random walk}

Let $(\ell_0^v,\, v\in V(\GG))$ be given arbitrary real numbers with
$\ell_0^v\geq 0 $, $ v\in V(\GG)$; these are the initial vertex weights. 
Given a \textit{reinforcement weight function} 
$w:[0,\infty) \mapsto (0,\infty)$,
the vertex-reinforced random walk (VRRW)
on $\GG$ records nearest-neighbour step transitions of a particle in $V(\GG)$. That is:
\begin{itemize}
\item [(i)] if currently at vertex $v\in V(\GG)$, in the next step the particle jumps to a vertex $u\in V(\GG)$ adjacent to $v$.
\item [(ii)] the probability of a jump to $u$ is $w$-\textit{proportional} to the number of previous traversals of the vertex $u$. 
\end{itemize}
The more formal definition is as follows.
Denote by
$I_n$ the (\textit{random}) position of the {\VRRW} at time $n$. At the \textit{initial time} $t_0:=0$, the {\VRRW} is at the initial position $I_{t_0}\in V(\GG)$,
and 
$\{I_n, I_{n+1}\} \in E({\GG})$ for all $n\geq t_0$.
Let $\FF_n$ be the filtration
\begin{equation}
\label{Efiltv}
\FF_n=\sigma\{I_k,k=0,\ldots ,n, (\ell_0^v,v\in V({\GG}))\}.
\end{equation}
Moreover, the {\VRRW} follows the following rule for $n\ge t_0$: 
\[
  {\mathbb{P}}^{\GG}(I_{n+1}=v|\FF_n)  1_{\{I_n=u \}}= \frac{w(X_n^{v})}{
  \sum_{y\sim u}w(X_n^{y})} 1_{\{I_n=u, u\sim v \}},
\]
where for any $v\in V(\GG)$ and $n\ge t_0+1$ 
\begin{equation}
\label{Esteps}
X_n^v = \ell_0^v + \sum_{i=t_0+1}^{n} 
1_{\{ \mbox{\small{the walk jumped to $v$ at the $i$th step}} \}} =
\ell_0^v + \sum_{i=t_0+1}^{n}
1_{\{I_{i}=v \}} 
\end{equation}
equals the \textit{initial vertex weight} $\ell_0^v$ incremented
by the total number of jumps to the vertex $v$ up to time $n$.
Whenever $|V(\GG)|< \infty$,
$\sum_{v\in V({\scriptsize \GG})}X_k^v = k+\sum_{v\in V(\GG)}\ell_0^v$ for all $k\geq 0$, almost surely.

\subsection{Main results}

We will state below the main results for each of our models of interest introduced above.

\subsubsection{Edge-reinforced random walk}

We assume for arbitrary initial edge weights $\ell_0^e\ge 0,e\in E(\GG)$, the condition on $w$
\begin{equation}
 \label{Azero}
\sup_{e\in E(\GG)}\sum_{i=1}^\infty\frac{1}{w(i+\ell_0^e)}<\infty.
\end{equation}
Any weight $w$ satisfying condition (\ref{Azero}) is called \textit{super-linear (or strong)}, and the corresponding ERRW is called \textit{strongly reinforced walk}.

Let $\mathbb{N}$ be the set of non-negative integers. If $\ell_0^e\in\mathbb{N}, e\in E(\GG),$ it is sufficient to assume instead of (\ref{Azero}) the assumption that
\begin{equation}
\label{Azeronat}
 \sum_{i =1 }^\infty \frac{1}{w(i)}<\infty,
\end{equation}
since for all $\ell_0^e\in\mathbb{N}$, we have
$$\sum_{\ell=1}^\infty \frac{1}{w(\ell+\ell_0^e )} \leq \sum_{i =1}^\infty \frac{1}{w(i)}<\infty.$$
We also make the additional assumption that 
\begin{equation}
\label{initialweight}
\sup_{e\in E(\GG)} w(\ell_0^e)<\infty.
\end{equation}
Recall that  $X_k^e$ equals the initial edge weight
$\ell_0^e$ incremented by the number of times edge $e$ has been visited by time $k$. Let
\begin{equation}
\label{attr}
\GG_\infty=\{e\in E:\sup_n X_n^e=\infty\}
\end{equation}
be the (random) graph spanned by all edges in $\GG$ that are traversed by the
walk infinitely often. As we will show below, we have
\begin{equation*}
\{\GG_\infty~\mbox{has only one edge}\}=\{\exists N<\infty~\mbox{such that}~\{I_n,I_{n+1}\}=\{I_n,I_{n-1}\}~\forall n\ge N\}.
\end{equation*}
Our main result for edge-reinforced random walks is 
\begin{theorem}
\label{maininfedge}
Let $\GG$ be an infinite connected graph of bounded degree. If $w$ satisfies (\ref{Azero}) and (\ref{initialweight}), the edge-reinforced
random walk on $\GG$ traverses a random attracting edge at all large times a.s.,
that is
\begin{equation}
\label{edgetrav}
{\mathbb{P}}^{\GG}(\GG_\infty~\mbox{has only one edge}) = 1.
\end{equation}
\end{theorem}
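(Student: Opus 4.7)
The plan is to adapt Rubin's construction to the infinite-graph setting and combine it with the order statistics upper bounds of Cotar--Limic (2009) alluded to in the abstract. For each edge $e \in E(\GG)$, introduce an independent sequence of i.i.d.\ unit-mean exponential random variables $(Y_i^e)_{i\ge 1}$ (independent across edges) and set
\[
T_k^e = \sum_{i=1}^k \frac{Y_i^e}{w(\ell_0^e+i-1)}, \qquad T_\infty^e = \lim_{k\to\infty} T_k^e.
\]
Condition (\ref{Azero}) gives $\mathbb{E}[T_\infty^e] < \infty$, hence $T_\infty^e < \infty$ a.s.\ for every $e$. Under Rubin's coupling, one can realise the \ERRW\ so that at each step the walk traverses the edge incident to its current position whose next clock ring comes earliest. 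Observe that, because $D(\GG)<\infty$, any edge $e \in \GG_\infty$ forces both its endpoints to be visited infinitely often, so $\GG_\infty$ having exactly one edge is equivalent to the walk eventually oscillating on a fixed edge, which is the event in (\ref{edgetrav}).

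Next I would invoke the Cotar--Limic (2009) order statistics bound: for each vertex $v$, the number of traversals of the $k$-th most traversed edge incident to $v$ is stochastically dominated by a deterministic function of the $k$-th smallest value among $\{T_\infty^e : e\sim v\}$. Assumption (\ref{initialweight}), combined with $D(\GG) < \infty$, makes this domination uniform across $V(\GG)$: only the edge minimising $e \mapsto T_\infty^e$ among those incident to $v$ can contribute infinitely many traversals at $v$, while every other incident edge contributes only an a.s.\ finite random number of traversals.

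The heart of the argument is to show that $\GG_\infty$ is almost surely finite, i.e.\ only finitely many distinct edges are ever traversed. I would enumerate the edges in the order in which they are first traversed and use a Borel--Cantelli argument to bound the probability of ever using an $(n+1)$-st distinct edge by a summable sequence in $n$. Concretely, each time the walk first reaches a new vertex $v$, whether it ever leaves $v$ through a further new edge is controlled by the order statistics bound above, since the walk must ``lose'' a clock race at $v$; thanks to (\ref{Azero}) and (\ref{initialweight}) these losing probabilities decay fast enough to sum. This is the main obstacle, as the standard finite-graph arguments implicitly rule out escape to infinity and hence cannot be invoked until this step is established.

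Once $\GG_\infty$ is known to be almost surely finite, the walk is eventually confined to a (random) finite subgraph, and the finite-graph attraction result of Limic--Tarr\`es (2007) then applies to yield that a single random edge is traversed at all large times, giving (\ref{edgetrav}).
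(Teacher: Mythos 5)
Your high-level plan (show $\GG_\infty$ is a.s.\ finite, then reduce to the finite-graph case) matches the paper's strategy, but both of the two concrete steps you invoke do not actually work, and you would end up re-proving most of the paper's argument to repair them.

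First, you cannot finish by invoking the Limic--Tarr\`es (2007) finite-graph result. Their theorem requires a monotonicity-type (almost-increasing) hypothesis on $w$ in addition to reciprocal summability; the paper explicitly points out (end of Section 1.3.1) that for weights such as $w(k)=k^{1+\rho}/(2+(-1)^k)$, which satisfy (\ref{Azero}), the Limic--Tarr\`es method fails on graphs with an odd cycle. The whole point of Theorem \ref{mainedgefin} in the paper is to replace that missing ingredient with the order-statistics argument (Propositions \ref{Pfingraph1}--\ref{2orderstat} plus the choice of the function $g$ via Lemma \ref{gexistence}). So the last sentence of your proposal is not a citation one can make.

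Second, even granting an a.s.\ finite $\GG_\infty$ and a finite-graph theorem, the reduction is not immediate: conditionally on the walk staying forever in a finite subgraph $\GG_n$, the law of the process is \emph{not} the ERRW law on $\GG_n$ (one is conditioning on a positive-probability but non-trivial event). The paper must re-derive the path-independent bound (\ref{EPfingraph1}) for the conditional measure $\mathbb{P}_{|\GG_n}$, which is precisely Step 3 of the proof of Theorem \ref{maininfedge} (a new induction on $k$, producing inequality (\ref{constrgn})). Your proposal treats this reduction as automatic, which is the main place where a careful referee would stop you. Two further, smaller issues: what you call the ``Cotar--Limic order statistics bound'' (a stochastic domination by functions of the $T_\infty^e$) is not what Cotar--Limic prove --- Proposition 19 there is a combinatorial upper bound on $\mathbb{P}^{\GG}(X_k^e-\ell_0^e=\ell_k^e,\,\forall e, A_{v,k})$, not a Rubin-clock domination; and the statement that ``only the edge minimising $T_\infty^e$ at $v$ can contribute infinitely many traversals'' is a clean fact on star or bipartite graphs but is exactly what is \emph{not} straightforward on graphs with odd cycles. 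Your Borel--Cantelli sketch is also much vaguer than what is needed: the paper's Lemma \ref{Lem:Stuck_n_distance} works because each time the walk reaches graph-distance $n$ it has a \emph{uniform} probability $p>0$ of immediately getting stuck on a newly encountered edge (Lemma \ref{Lem:Stuck_new_edge}), giving the geometric decay $(1-p)^{[n/2]}$; ``losing probabilities decay fast enough to sum'' does not describe a probability that decays in $n$ at all --- it is $p$ that is bounded \emph{below}, and the decay comes from repeated independent chances.
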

We emphasize that, unlike all other existing results in the literature on super-linear ERRW, we make \textit{no other assumptions} on the graphs except that they are connected and of bounded degree. Theorem \ref{maininfedge} proves a long-standing conjecture of Sellke \cite{sel94} regarding the generality of the graphs and of the reinforcement weight $w$ under which (\ref{edgetrav}) holds. Moreover, our proof can be extended to the case where each edge follows its own reinforcement function $w_e$ satisfying (\ref{Azero}) and (\ref{initialweight}), as explained in detail in Remarks \ref{difw} (a) and \ref{eachedgeinf} (a) below.

We next briefly discuss the link of our work to the recent literature.
For a detailed review of a number of interesting results on {\ERRW}, we refer the reader to a survey of Pemantle \cite{pem_survey06} on stochastic reinforcement processes.

A result of Sellke \cite{sel94} 
implies that (\ref{Azeronat}) is sufficient and necessary for 
\begin{equation}
\label{Eatrac}
{\mathbb{P}}^{\GG}(\mbox{the walk ultimately traverses a single edge})=1,
\end{equation}
whenever the underlying graph is bipartite and of bounded degree.
More recently, Limic and Tarr\`es \cite{LT05} showed that for a large class of weight functions $w$, which includes the class of increasing functions satisfying (\ref{Azeronat}), 
(\ref{Eatrac}) holds on any graph of bounded degree. Two examples of almost increasing weights $w$ for which the method in \cite{LT05} does not work on finite/infinite graphs with {at least} an odd cycle are: $w(k)=k^{1+\rho}/(2+(-1)^k), \rho>0,$ and $w(k)=\exp\{k(2+(-1)^k\}$.

We consider next the optimality of our assumptions. It is easy to find examples of locally bounded trees with $D(\GG)=\infty$
such that (\ref{Azero}) holds but that the range of the walk is infinite with positive probability.
Morever, Sellke \cite{sel94} provides (slightly peculiar) examples  of {\ERRW s} on $\mathbb{Z}$
where $\sum_k 1/w(k)$ is finite over even $k$ and infinite over odd $k$, but where 
$\GG_\infty$ is still a finite graph, almost surely. On the other hand, for the related generalized Polya urn model with infinitely many urns, Collevecchio, Cotar and LiCalzi \cite{CCL} provide an example of super-linear weights $w_e,e\in E(\GG),$ which satisfy instead of (\ref{Azero}) only the weaker assumption $\sum_{i=1}^\infty\frac{1}{w_{e}(i+\ell_0)}<\infty,e\in E(\GG), \ell_0\in\mathbb{N}$, and do not satisfy (\ref{initialweight}), and in which case the walk visits all urns finitely many times.

\subsubsection{Vertex-reinforced random walk}

We assume for arbitrary initial vertex weights $\ell_0^v\ge 0,v\in V(\GG)$, the condition on $w$
\begin{equation}
 \label{Azerov}
\sup_{v\in V(\GG)}\sum_{i=1}^\infty\frac{1}{w(i+\ell_0^v)}<\infty.
\end{equation}
As in the edge-reinforced case, if $\ell_0^v\in\mathbb{N}, v\in V(\GG),$ it is sufficient to assume instead of (\ref{Azerov}) that
\begin{equation}
\label{Azerovnat}
 \sum_{i =1 }^\infty \frac{1}{w(i)}<\infty.
\end{equation}
We also make the additional assumption
\begin{equation}
\label{initialweightv}
\sup_{v\in V(\GG)} w(\ell_0^v)<\infty.
\end{equation}
Recall that  $X_k^v$ equals the initial vertex weight
$\ell_0^v$ incremented by the number of times vertex $v$ has been visited by time $k$. Let
\begin{equation}
\label{attr}
\GG_\infty=\{v\in E:\sup_n X_n^v=\infty\}
\end{equation}
be the (random) graph spanned by all vertices in G that are traversed by the
walk infinitely often. 

Our main result for vertex-reinforced random walks is 
\begin{theorem}
\label{maininfvert}
Let $\GG$ be an infinite connected graph of bounded degree. If $w$ satisfies (\ref{initialweightv}) and
\begin{equation}
\label{supinfv}
\sup_{v\in V(\GG)}\sum _{i\geq 1}^{\infty}\frac{i}{w(i+\ell_0^v)} < \infty,
\end{equation}
the walk traverses exactly two random neighbouring attracting vertices at all large times a.s.,
that is
$${\mathbb{P}}^{\GG}(\GG_\infty~\mbox{has exactly two vertices}) = 1.$$
\end{theorem}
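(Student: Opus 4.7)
The overall strategy is to combine a Rubin-type embedding with an order statistics analysis analogous to the Cotar--Limic approach, now adapted from edges to vertices. First, I would show that $\GG_\infty$ is a.s. finite. Using an embedding via iid exponentials $\{\xi_i^v\}_{i\geq 1,v\in V(\GG)}$, the total clock time associated with visits to $v$ is $\sum_{i\geq 1}\xi_i^v/w(i+\ell_0^v)$, which is a.s. finite by the weaker hypothesis (\ref{Azerov}) (a consequence of (\ref{supinfv})) together with (\ref{initialweightv}). A Borel--Cantelli argument on the probability of reaching ever-new vertices, in the spirit of Sellke's argument for ERRW, then yields $|V(\GG_\infty)|<\infty$ a.s.

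Next, the lower bound $|V(\GG_\infty)|\geq 2$ is immediate: at each step the walk must move to an adjacent vertex, so every $v$ visited infinitely often has at least one neighbour also visited infinitely often, and $\GG_\infty$ is connected since the walk moves along edges.

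The principal step is proving $|V(\GG_\infty)|\leq 2$, which I would establish by contradiction. Assume $|V(\GG_\infty)|\geq 3$; connectedness then yields a vertex $v^*\in V(\GG_\infty)$ with two distinct neighbours $u_1,u_2\in V(\GG_\infty)$. I would apply the order statistics technique of Cotar--Limic to the weights $(X_n^y)_{y\sim v^*}$: by super-linear reinforcement, after some random time a dominant neighbour $u^*$ emerges whose weight strictly exceeds the others. For any $u_i\neq u^*$, each visit of the walk to $v^*$ followed by a jump to $u_i$ contributes a probability at most
\[
\frac{w(X_n^{u_i})}{w(X_n^{u^*})}.
\]
Summing these bounds over all visits to $v^*$ and invoking (\ref{supinfv}) — where the factor $i$ compensates for the up to $i$ visits to $v^*$ required before $u_i$ has itself been visited $i$ times — yields a finite total, so by Borel--Cantelli only finitely many such transitions occur, contradicting $u_i\in V(\GG_\infty)$. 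The main obstacle is that, unlike the edge-reinforced case, the weight $X_n^{u_i}$ grows not only from moves $v^*\to u_i$ but also from visits to $u_i$ via other neighbours in $\GG_\infty$; decoupling these contributions and verifying that the strengthened hypothesis (\ref{supinfv}) (rather than merely $\sum 1/w(i)<\infty$) is precisely what closes the Borel--Cantelli bound is the delicate technical point, and also explains why the assumption here is strictly stronger than in Theorem \ref{maininfedge}.
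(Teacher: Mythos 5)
Your step showing $|V(\GG_\infty)|<\infty$ matches the paper in spirit: the paper invokes Theorem 5.1 of Benaim--Raimond--Schapira (a Rubin-construction argument) for exactly this, so this part is fine. Your step showing $|V(\GG_\infty)|\geq 2$ is trivially correct. The problem is the main step.

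Your argument for $|V(\GG_\infty)|\leq 2$ takes a genuinely different route from the paper, and it has a gap that you yourself flag but do not close. You argue by contradiction: take a vertex $v^*$ with two neighbours $u_1,u_2$ in $\GG_\infty$, declare that a ``dominant neighbour'' $u^*$ eventually emerges, and try to sum $w(X_n^{u_i})/w(X_n^{u^*})$ over visits to $v^*$ to run Borel--Cantelli. But for VRRW the numerator $X_n^{u_i}$ grows from visits to $u_i$ arriving through \emph{other} vertices of $\GG_\infty$ (not just $v^*\to u_i$ jumps), and the denominator $X_n^{u^*}$ offers no a priori lower bound along the visits to $v^*$; the emergence of a dominant neighbour at $v^*$ is itself part of what needs to be proved, not a consequence of super-linearity that can be quoted. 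You write ``decoupling these contributions \ldots is the delicate technical point,'' and indeed this is precisely the point at which the local Borel--Cantelli scheme breaks down; without a concrete way to control the ratio, the proposal is incomplete. Your heuristic for why the factor $i$ in (\ref{supinfv}) appears (``the up to $i$ visits to $v^*$ required before $u_i$ has itself been visited $i$ times'') does not correspond to anything that actually happens, since there is no such pairing between visits to $v^*$ and to $u_i$.

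The paper avoids the coupling problem entirely by not localising at a vertex. It establishes a \emph{path-independent} upper bound (Proposition \ref{Pfingraph1v}) on the joint law of all vertex-visit counts, passes to the order statistics $(R_k^1,\ldots,R_k^{\nn})$ (Proposition \ref{vertgen}), and bounds the distribution of the third-largest count $R_k^3$ (Proposition \ref{2orderstatv}); the extra factor of $i$ in (\ref{supinfv}) arises because for each fixed value of $R_k^1+R_k^2$ there are on the order of $R_k^3$ admissible pairs $(R_k^1,R_k^2)$, so the resulting bound carries a factor $\ell_k^3/w(\ell_k^3+\ell_0)$ that must be summable against the auxiliary function $g$. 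One then deduces $\mathbb{E}^{\GG}\bigl(g(R_\infty^3)\bigr)<\infty$, hence $R_\infty^3<\infty$ a.s., i.e.\ at most two vertices are visited infinitely often on the finite graph; the infinite graph case follows by conditioning on the walk staying in a finite ball $\GG_n$ and applying Lemma \ref{Lem:Stuck_n_distance_vertex_a}. If you want to pursue your local route, you would need a mechanism playing the role of the Cotar--Limic path-independent bound to replace the unjustified ``dominant neighbour'' step.
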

Just as for edge-reinforced random walks, for our theorem we make \textit{no other assumptions} on the graphs except that they are connected and of bounded degree. Moreover, we can again generalize the proof to the case where each vertex follows its own reinforcement function $w_v$ satisfying (\ref{initialweightv}) and (\ref{supinfv}). To the best of our knowledge, our theorem is the first result of almost sure attraction to two vertices for vertex-reinforced walks with super-linear reinforcement on general finite/infinite graphs of bounded degree. It has been previously shown by Benaim, Raimond and Schapira \cite{BRS} that VRRW with super-linear reinforcement on infinite graphs of bounded degree gets stuck almost surely on a finite graph.  

VRRW with super-linear reinforcement turns out to have a more complicated structure than ERRW and there are few results available. It has been studied on $\mathbb{Z}$ for non-decreasing weights $w$ satisfying (\ref{Azerov}) by Volkov \cite{Vol2}  and by Basdevant, Schapira and Singh \cite {BSS}; therein, the authors showed that the walk gets stuck on two vertices. This is in contrast to the recent result of Benaim, Raimond and Schapira \cite{BRS} for complete graphs with weight function $w(\ell)=\ell^\rho, \rho>1$, in which particular case the walk is shown to get stuck with positive probability on more than 2 vertices if $1<\rho\le 2$. 
\begin{remark}
\label{conjecture}
\begin{itemize} 
\item [(a)] The assumptions in Theorem \ref{maininfvert} are satisfied by a large class of weight functions, among which are all the weight functions of order equal to or higher than $w(\ell)=\ell^2\log^2\ell$. Indeed, we have in this case for $\ell_0^v\in\mathbb{N},v\in V(\GG)$, that
$$\sup_{v\in V(\GG)}\sum _{i\geq 1}^{\infty}\frac{i}{w(i+\ell_0^v)} \le\sum _{i=1}^{\infty}i/w(i)\le O\big(\sum_{i=1}^\infty 1/ i\log^2 i\big)<\infty.$$

\item [(b)] We do not expect Theorem \ref{maininfvert} to hold for vertex-reinforced walks on infinite graphs of bounded degree in full generality of graph and of super-linear reinforcement functions $w$. In fact, we believe that (\ref{supinfv}) is almost optimal for general graphs for the statement in Theorem \ref{maininfvert}. Instead, for $w $ satisfying (\ref{Azerov}) but not (\ref{supinfv}), we conjecture that the size of the attraction set will depend on the geometry of the graph, in particular on whether the graph has any triangles, and on the weight $w$, as is the case in \cite{BRS} and \cite{Vol2}. 

\item [(c)] We also conjecture, based on preliminary estimates, that the following holds on arbitrary infinite graphs of bounded degree: For any fixed $k\ge 2$, if
$$\sup_{v\in V(\GG)}\sum _{i\geq 1}^{\infty}\frac{i^{1/k}}{w(i+\ell_0^v)} < \infty,$$
then the walk traverses at most $k+1$ random attracting vertices at all large times a.s.,
that is
$${\mathbb{P}}^{\GG}(\GG_\infty~\mbox{has at most}~k+1~\mbox{vertices}) = 1.$$
On complete graphs with $|V(\GG)|=\nn$, this has been shown in \cite{BRS} for $k\le\nn-1$ and $w(\ell)=\ell^\rho, 1<\rho\le 2$.
\item [(d)] By means of Proposition \ref{Pfingraph1v} below, we can also compute for super-linear VRRW in finite graphs satisfying (\ref{supinfv}) the asymptotic behaviour of the tail distribution of the (random) time of attraction to two vertices. Formally, the {time of attraction} $T$ is defined as follows
\begin{eqnarray}
\label{ETsym}
T
&=&\inf\{k\geq 0: \{I_n,I_{n+1}\}=\{I_{n+1},I_{n+2}\}, \forall n\geq k\},
\end{eqnarray}
that is, the first time after which only the {attracting}
 edge is traversed.

The arguments are similar to the ones for the super-linear ERRW case studied in \cite{CotarLimic}.
\end{itemize}
\end{remark}
In the special case of bipartite graphs, in particular of ${\mathbb{Z}}^d, d\ge 1$, our method yields a stronger result than in Theorem \ref{maininfvert} above, which includes also the results obtained for $\mathbb{Z}$ both in \cite{Vol2}  and in \cite {BSS}. 
\begin{cor}
\label{zd}
Let $\GG$  be an infinite bipartite graph of bounded degree. If $w$ satisfies (\ref{initialweightv}) and either: 
\begin{itemize}
\item [(a)] \begin{equation}
\label{supinfv1}
\sup_{v\in V(\GG)}\sum _{i\geq 1}^{\infty}\frac{i^{1/2}}{w(i+\ell_0^v)} < \infty,~~\mbox{or}
\end{equation}
\item [(b)] $w$ satisfies (\ref{Azerov}) and 
$$\sup_{v\in V(\GG)} \sup_{i\ge 1}\frac{i}{w(i+\ell_0^v)}<\infty,$$
\end{itemize}
then the walk traverses exactly two random neighbouring attracting vertices at all large times a.s..
\end{cor}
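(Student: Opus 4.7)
The plan is to adapt the proof of Theorem~\ref{maininfvert} to the bipartite setting, exploiting the alternating structure of the walk to weaken the hypothesis from $\sum_i i/w(i+\ell_0^v)<\infty$ to $\sum_i i^{1/2}/w(i+\ell_0^v)<\infty$. Write $V(\GG)=A\sqcup B$ for the bipartition, so that $I_{n+1}$ always lies in the class opposite to $I_n$. Both (a) and (b) imply (\ref{Azerov}): in (a) this is immediate since $i^{1/2}\geq 1$ for $i\geq 1$, and (b) assumes it. Hence the reduction portion of the proof of Theorem~\ref{maininfvert} applies, and almost surely the walk is eventually confined to a random finite connected bipartite subgraph $\GG'\subseteq\GG$; the remainder of the argument can then be carried out on this finite $\GG'$.

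Next, I would upgrade the finite-graph estimate underlying Theorem~\ref{maininfvert} by exploiting bipartiteness. In a general graph, the competitors of a vertex $v$ may be mutually adjacent (in particular, $v$ may sit in a triangle), and this is what forces the full exponent $1$ on $i$ in (\ref{supinfv}). In a bipartite graph, all neighbours of $v$ lie in the single class opposite to $v$, so they share no edges and participate in no odd cycle with $v$. The absence of triangles permits a Cauchy--Schwarz-type bound on the joint growth of visits to competing vertices, producing a $\sqrt{n}$ rather than $n$ growth rate in the relevant cumulative reinforcement quantity, and correspondingly allows one to weaken the hypothesis in the order-statistics estimate to (\ref{supinfv1}). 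Part (a) then follows directly.

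For part (b), the condition $\sup_i i/w(i+\ell_0^v)<\infty$ provides a uniform pointwise lower bound $w(i+\ell_0^v)\gtrsim i$; this borderline case need not imply (a) by Cauchy--Schwarz in general (consider $w(i)\sim i\log^{1+\varepsilon}i$ for small $\varepsilon>0$), but it can be handled by the same order-statistics machinery: one feeds the explicit linear lower bound on $w$ together with $\sum_i 1/w(i+\ell_0^v)<\infty$ directly into the tail estimate, bypassing the need for summability of $i^{1/2}/w$.

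The main obstacle I anticipate is implementing the bipartite improvement itself: the order-statistics estimates of \cite{CotarLimic} were developed for edge-reinforced walks on general finite graphs, so transferring them to vertex-reinforced walks on finite bipartite graphs with the improved $\sqrt{i}$ exponent will require carefully decomposing the walk into its even-indexed and odd-indexed sub-sequences, tracking the random time each side spends at a given vertex, and matching the associated Rubin-type clocks across the two sides in a way that produces the desired square-root. Once this is in place, both (a) and (b) should follow with only routine book-keeping.
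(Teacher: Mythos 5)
Your high-level plan is right—reduce to finite bipartite graphs via the confinement lemma (Lemma~\ref{Lem:Stuck_n_distance_vertex_a}), then improve the order-statistics bound of Theorem~\ref{mainvertfin} by exploiting bipartiteness—and your observation that (b) does not reduce to (a) (e.g.\ $w(i)\sim i\log^{1+\eps}i$) is correct. But the critical ingredient, the mechanism by which bipartiteness buys the improvement from $\sum i/w(i)$ to $\sum i^{1/2}/w(i)$, is missing: ``a Cauchy--Schwarz-type bound on the joint growth of visits'' and ``matching Rubin-type clocks across the two sides'' are both placeholders rather than arguments, and you say so yourself in your final paragraph.

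The actual mechanism (Lemma~\ref{specialcase} in the paper) is combinatorial, not analytic. In a bipartite graph with parts $U_1,U_2$, each part receives visit-total $[k/2]\pm 1$, so the order-statistics vector $R_k$ decouples into a $U_1$-piece and a $U_2$-piece each summing to $\approx k/2$. The payoff is in bounding the first term of the expansion of ${\mathbb{P}}^{\GG}(R_k^3=\ell_k^3)$ (the one missing both $w(R_k^1)$ and $w(R_k^2)$): in a general graph, for each fixed $R_k^1+R_k^2$ there can be up to $\nn\ell_k^3$ admissible pairs $(R_k^1,R_k^2)$, leading to a factor $\ell_k^3$ that forces the hypothesis $\sum i/w(i)<\infty$; but in a bipartite graph, if both $R_k^1,R_k^2$ lie in (say) $U_1$, then the largest $U_2$-value $y^1$ satisfies $y^1\gtrsim k/|U_2|\gtrsim \ell_k^3\geq h^1$, so the pair-count is at most $\lesssim y^1$ and, since $w(y^1+\ell_0)$ \emph{does} appear in the product, the excess factor becomes $y^1/w(y^1+\ell_0)$. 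The $\sqrt{\cdot}$ then comes from the elementary split $a=a^{1/2}\cdot a^{1/2}\leq (\ell_k^3)^{1/2}a^{1/2}$ and summability of $a^{1/2}/w(a+\ell_0)$, i.e.\ assumption (\ref{supinfv1}); no Cauchy--Schwarz is invoked. For (b), the same pair-counting is used but the factor $y^1/w(y^1+\ell_0)$ is absorbed into a constant via (\ref{bipbip}) and one falls back on (\ref{Azerov}). Without this pair-counting step your proof does not go through; the Rubin-clock route you gesture at would be a fundamentally different (and, in this paper's framework, unused) argument.

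One further point: the triangle-free extension (Corollary~\ref{tfg}) shows that the essential property is ``for each fixed $R_k^1+R_k^2$ the number of admissible pairs is $O(R_k^4)$,'' which in the bipartite case has the alternative derivation sketched above and in the triangle-free case is derived by a case analysis on whether $v_3$ is adjacent to $v_1$ and/or $v_2$. That is the lemma your proof would have to supply.
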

\begin{remark}
The assumptions in (a) above are satisfied by a large class of weight functions, among which are all the weight functions of order equal to or higher than $w(\ell)=\ell^{3/2}\log^2\ell$. The assumptions in (b) are satisfied in particular by all non-decreasing weights $w$ which obey (\ref{Azerov}).
\end{remark}
We can actually extend the result in Corollary (\ref{zd}) (a) to the more general case of triangle-free graphs. More precisely, we can show
\begin{cor}
\label{tfg}
Let $\GG$ be an infinite triangle-free graph, connected and of bounded degree. If $w$ satisfies (\ref{initialweightv}) and
\begin{equation}
\label{supinfv1}
\sup_{v\in V(\GG)}\sum _{i\geq 1}^{\infty}\frac{i^{1/2}}{w(i+\ell_0^v)} < \infty,
\end{equation}
then the walk traverses exactly two random neighbouring attracting vertices at all large times a.s..
\end{cor}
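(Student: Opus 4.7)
The plan is to revisit the proof of Corollary \ref{zd}(a) and identify the precise combinatorial input supplied by bipartiteness, showing that this input is in fact exactly the triangle-free property. First, I would reduce to a finite-graph statement: by Theorem \ref{maininfvert} (or directly by the a.s.\ finite-range result of Benaim, Raimond and Schapira), the walk eventually lives in a random finite connected subgraph $H\subset\GG$, which inherits the triangle-free property from $\GG$. It therefore suffices to show that for every finite connected triangle-free graph $H$ and every initial weight profile satisfying (\ref{initialweightv}) and (\ref{supinfv1}), the VRRW on $H$ cannot be attracted to a set of three or more vertices.

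Next, I would run the order-statistics and Rubin-type coupling used in the proof of Corollary \ref{zd}(a). In that argument the bipartite hypothesis is invoked \emph{only} to guarantee the local structural fact that any two adjacent vertices $u\sim v$ in $V(H)$ have no common neighbour. This fact is precisely what permits the $i$-factor in the summability condition of Theorem \ref{maininfvert} to be relaxed to $i^{1/2}$: the competing events that rule out attraction to a candidate edge $\{u,v\}$ reduce to pairwise comparisons of Rubin clocks at $u$ and $v$, rather than to three-way comparisons that would arise if some third vertex $w$ were adjacent to both $u$ and $v$ and could drain visit mass from each simultaneously. In a triangle-free graph such a $w$ cannot exist, so the same Cauchy--Schwarz and second-moment estimates used in the bipartite case remain valid.

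The main obstacle is a careful bookkeeping audit of the proof of Corollary \ref{zd}(a) to verify that bipartiteness really is used only through the no-common-neighbour consequence, and not through genuine two-colouring features such as time-parity of the walk's position. I expect this audit to succeed because the Rubin-clock comparisons underlying the $i^{1/2}$ gain are inherently local, involving an attracting edge $\{u,v\}$ together with its one-step neighbourhood, which is exactly the scope where the triangle-free condition suffices. Once that check is made, Corollary \ref{tfg} follows by the same argument as Corollary \ref{zd}(a), with ``triangle-free'' substituted for ``bipartite'' at every appeal to the local structural property.
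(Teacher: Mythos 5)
Your high-level plan is sound: reduce to finite graphs via the Benaim--Raimond--Schapira finite-range result, and argue that triangle-freeness is the genuine local input permitting the $i^{1/2}$ relaxation. That matches the paper's reduction. However, the way you propose to get there has a real gap.

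You claim that in the bipartite proof of Corollary~\ref{zd}(a) ``the bipartite hypothesis is invoked only to guarantee the local structural fact that any two adjacent vertices $u\sim v$ have no common neighbour,'' and you explicitly dismiss the concern about ``genuine two-colouring features such as time-parity.'' This is precisely where the audit-and-substitute approach fails. The unified proof of Lemma~\ref{specialcase} uses bipartiteness globally, through the decoupling in (\ref{bipartite}): the total visit count to $U_1$ and to $U_2$ each stay within 2 of $[k/2]$, because the walk alternates between the two colour classes. That alternation \emph{is} a time-parity feature, and it has no analogue in a general triangle-free graph (the $5$-cycle, say, is triangle-free but not bipartite, and admits no such global balance). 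So one cannot simply substitute ``triangle-free'' for ``bipartite'' throughout that argument. You also misidentify the mechanism as ``Rubin-clock comparisons''; the finite-graph half of the paper's argument is driven by the path-independent order-statistics bounds of Proposition~\ref{Pfingraph1v}/\ref{vertgen} and the counting in Proposition~\ref{2orderstatv}, not by Rubin-type competing-exponential couplings (Rubin's construction enters only through Lemma~\ref{Lem:Stuck_n_distance_vertex_a}, the finite-range step, which is not where the $i^{1/2}$ gain lives).

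What is actually needed, and what the paper supplies in Lemma~\ref{triangle-free case}, is a new combinatorial estimate derived directly from triangle-freeness: letting $v_1,v_2,v_3$ denote the three most-visited vertices, a case analysis on whether $v_3$ is adjacent to one or both of $v_1,v_2$ (and, in the latter case, the forced non-adjacency of $v_1$ and $v_2$) shows that for each fixed value of $R_k^1+R_k^2$, the number of admissible $(R_k^1,R_k^2)$ pairs is at most $1+(\nn-3)R_k^4$. Plugging this counting bound into the first term of (\ref{eqmainverta}) and using (\ref{supinfv1''}) yields the crucial $(\ell_k^3)^{1/2}/w(\ell_k^3+\ell_0)$ control, after which the finite-graph attraction statement follows as in Theorem~\ref{mainvertfin}, and the infinite-graph corollary follows as before. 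Your proposal omits this counting lemma entirely, and without it the route you describe does not close.
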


\subsubsection{Strategy of the proof}
We develop for the proof of our main statements  a new simple but powerful general technique for the study of edge- and vertex-reinforced processes, based on the use of the order statistics for the number of edge traversals (for ERRW), respectively of vertex traversals (for VRRW), in the graph. 

The key observation for our proofs on arbitrary finite graphs is that showing that the walk gets attracted to \textit{at most $i$ vertices} is equivalent to showing that the expectation $\mathbb{E}\left(g(R^{i+1})\right)$ is finite, where $g$ is a given measurable function and $R^{i+1}$ is the $(i+1)$th largest number of edge, respectively of vertex, traversals. By monotonicity arguments, this reduces to the question of finding a tight upper bound for the distribution of $R_k^{i+1}$, i.e. the $(i+1)$th largest number of edge, respectively of vertex, traversals by time $k$. The main tool for obtaining this bound is an upper bound inequality for the number of edge traversals at time $k$, proved in a different context by Cotar and Limic in \cite{CotarLimic} for all finite graphs with edge reinforcement.  

To prove our results on general infinite graphs of bounded degree, we couple our statements on finite graphs with an inequality giving upper bounds on the probability that the walk ever visits more than $\nn$ edges, respectively vertices. Of crucial importance now is that our finite graph theorems hold on any finite connected graphs, with no restriction on their particular geometric properties.

\vspace{2mm}

The rest of the paper is organized as follows. In Section 2 we focus our attention on the proof of Theorem \ref{maininfedge}, our main edge-reinforced random walks result; in Subsection 2.1 we prove in Theorem \ref{mainedgefin} the result on arbitrary finite graphs and in Subsection 2.2 we extend our arguments by means of Lemma \ref{Lem:Stuck_n_distance}  to ERRW on infinite graphs of bounded degree.  In Section 3 we show Theorem \ref{maininfvert}, our main vertex-reinforced random walk result; in Subsection 3.1 we prove the result on arbitrary finite graphs in Theorem \ref{mainvertfin} and in Subsection 3.2 we extend the reasoning by using Lemma \ref{Lem:Stuck_n_distance_vertex_a} to VRRW on infinite graphs of bounded degree.

\section{Strongly edge-reinforced random walks on general graphs}
\label{SGenGraph}

In this section we will prove Theorem \ref{maininfedge}. We will first work out the strategy of the proof for finite graphs in Subsection \ref{SFGraph} and then we will use the finite graph computations to extend our arguments in Subsection \ref{InfGraphs} to the case of infinite connected graphs with bounded degree.

\subsection{Analysis on finite graphs}
\label{SFGraph}
Let $\GG$ be a finite graph, and abbreviate $\nn=|E(\GG)|$. Denote the set of edges of $\GG$ by $E(\GG)=\{e_1,e_2,\ldots e_\nn\}.$ 
If $v$ is an arbitrary vertex of the graph, let $d_v:=\mbox{degree(v)}$, and let
$\NN_v:=\{e_1^v, e_2^v,\ldots e_{d_v}^v\}$ be the set of  edges incident to $v$.

\subsubsection{Bounds for the probabilities of the edge weights order statistics}

Recall that $t_0:=0$.
Fix the initial position $I_{t_0}$ at some arbitrary vertex $v_0$. 
We re-label $(X_k^e-\ell_0^e)$ (the number of edge traversals at time $k\ge 0$) in increasing order. More precisely, we define the \textit{order statistics at time $k$} as a vector $R_k = (R_k^1,...,R_k^{\nn})$.  The components of this vector are defined as the values of 
$$e\mapsto  X_k^e - \ell_0^e$$
put in non-increasing order; this defines the vector $R_k$ uniquely. Therefore, for all $k\ge 0$ we have by definition
$$0\le R_k^\nn\le R_k^{\nn-1}\le\ldots\le R_k^{1}\le k,~~~R_k^i\le R_{k+1}^i~\mbox{for all}~i=1,2,\ldots,\nn,~~\mbox{and}~~\sum_{j=1}^{\nn}R_k^j=k.$$ 

We recall next Proposition 19, proved by Cotar and Limic in \cite{CotarLimic}, which \textit{crucially} gives \textit{path-independent} upper bounds on the distribution of the number of edge traversals at time $k$.
\begin{proposition}
\label{Pfingraph1}
Let $k\geq 1$ and
$v\in V(\GG)$ and denote by $A_{v,k}$ the event $\{I_k=v\}$.
Then for any $\ell_k^e\in\mathbb{N}$, $e\in E(\GG)$, such that 
$\sum_{e \in E({\scriptsize \GG})}\ell_k^e=k$, we have 
\begin{equation}
\label{EPfingraph1}
{\mathbb{P}}^{\GG}(X_k^e-\ell_0^e=\ell_k^e, e \in E(\GG), A_{v,k})
\leq 
\frac{\prod_{e\in E({\scriptsize \GG})} w(\ell_0^e)}
{\min_{e \in {\NN}_{v_0}}\, w(\ell_0^e) }
\cdot 
\frac{\sum_{e\in \NNs_v}w(\ell_k^e+\ell_0^e)}
{\prod_{e\in E({\scriptsize \GG})} w(\ell_k^e+\ell_0^e)}.
\end{equation}
\end{proposition}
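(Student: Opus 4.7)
The plan is to prove \eqref{EPfingraph1} by induction on $k\ge 0$. Although the proposition is stated for $k\ge 1$, the case $k=0$ is a clean base case from which the stated range follows immediately. The key structural observation is that the right-hand side of \eqref{EPfingraph1} has been engineered precisely so that the factor $\sum_{e\in\NN_v}w(\ell_k^e+\ell_0^e)$ in its numerator is identical to the denominator $\sum_{y\sim u}w(X_k^{\{u,y\}})$ of the one-step transition probability when one decomposes the walk at time $k+1$ by its penultimate vertex $u=I_k$. The prefactor $\prod_e w(\ell_0^e)\big/\min_{e\in\NN_{v_0}}w(\ell_0^e)$ plays no role in the recursion and is present solely to validate the base case.

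For the base case $k=0$, the constraint $\sum_e \ell_k^e=0$ forces all traversal counts to vanish and $v=v_0$, so the LHS equals $\mathbf{1}\{v=v_0\}\le 1$, while the RHS reduces to $\sum_{e\in\NN_{v_0}}w(\ell_0^e)\big/\min_{e\in\NN_{v_0}}w(\ell_0^e)\ge 1$ since one term of the sum realises the minimum.

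For the inductive step, assuming \eqref{EPfingraph1} at time $k$, the Markov property gives
\begin{equation*}
\mathbb{P}^{\GG}\!\big(X_{k+1}^e-\ell_0^e=\ell_{k+1}^e,\,e\in E(\GG),\,A_{v,k+1}\big)=\sum_{u\sim v:\,\ell_{k+1}^{\{u,v\}}\ge 1}\mathbb{P}^{\GG}\!\big(X_k^e-\ell_0^e=\tilde\ell_k^e,\,e\in E(\GG),\,A_{u,k}\big)\cdot\frac{w(\tilde\ell_k^{\{u,v\}}+\ell_0^{\{u,v\}})}{\sum_{y\sim u}w(\tilde\ell_k^{\{u,y\}}+\ell_0^{\{u,y\}})},
\end{equation*}
where $\tilde\ell_k^{\{u,v\}}=\ell_{k+1}^{\{u,v\}}-1$ and $\tilde\ell_k^e=\ell_{k+1}^e$ for $e\ne\{u,v\}$. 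Applying the inductive bound to each summand, the denominator $\sum_{y\sim u}w(\cdot)$ of the transition cancels \emph{exactly} with the matching factor $\sum_{e\in\NN_u}w(\tilde\ell_k^e+\ell_0^e)$ in the numerator of the bound at time $k$. A one-line algebraic check shows
$w(\tilde\ell_k^{\{u,v\}}+\ell_0^{\{u,v\}})\big/\prod_e w(\tilde\ell_k^e+\ell_0^e)=w(\ell_{k+1}^{\{u,v\}}+\ell_0^{\{u,v\}})\big/\prod_e w(\ell_{k+1}^e+\ell_0^e)$, since both sides reduce to $1/\prod_{e\ne\{u,v\}}w(\ell_{k+1}^e+\ell_0^e)$. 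Summing over $u$ and then enlarging the restricted sum to $\sum_{e\in\NN_v}w(\ell_{k+1}^e+\ell_0^e)$ (adding only nonnegative terms) yields the bound at time $k+1$.

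The main obstacle, if it deserves the name, is not technical but conceptual: recognising that the particular ratio on the RHS of \eqref{EPfingraph1} is the unique combinatorial quantity that telescopes under the one-step ERRW dynamics, with the $\sum$ over $\NN_v$ in the numerator tailor-made to absorb the partition function $Z_k^v$ that appears when $v$ later becomes the previous vertex. Once the right factored form is written down, the remainder is careful bookkeeping of how a single edge-count increment propagates through the product $\prod_e w(\ell_{k+1}^e+\ell_0^e)$.
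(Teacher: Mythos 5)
Your proof is correct and follows essentially the same induction-on-$k$ argument that Cotar and Limic use for Proposition~19 in \cite{CotarLimic}, which the present paper cites rather than reproves; the same scheme (base case at $k=0$, decomposition over the penultimate vertex $u=I_k$, exact cancellation of the one-step normalising sum $\sum_{y\sim u}w(\cdot)$ against the $\sum_{e\in\NN_u}$ factor from the inductive hypothesis, and finally enlarging the restricted sum over $u$ with $\ell_{k+1}^{\{u,v\}}\ge 1$ to the full $\sum_{e\in\NN_v}$) is spelled out in the paper both in the proof of Proposition~\ref{Pfingraph1v} for VRRW and in Step~3 of the proof of Theorem~\ref{maininfedge} for the constrained ERRW.
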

Before we state a similar proposition to the above for $R_k$, we will first introduce some more notation. We re-label the initial edge weights $(\ell_0^e)_{e\in E(\GG)}$ by  $(\ell_0^{e_i})_{ e_i\in E(\GG), i=1,\ldots,\nn}$. Denote by $S(\nn)$ the symmetric group, i.e. the set of all permutations of $\{1,2,\ldots,\nn\}$, and by $\sigma$ an arbitrary element in $S(\nn)$. From Proposition \ref{Pfingraph1} we have that
\begin{proposition}
\label{edgegen}
Let $k\geq 1$ and
$v\in V(\GG)$ and denote by $A_{v,k}$ 
the event $\{I_k=v\}$.
Then for any $\ell_k^i\in\mathbb{N}, i=1,\ldots,\nn,$ such that 
$0\le\ell_k^{\nn}\le\ldots \le\ell_k^i\le\ldots \ell_k^1\le k$ and
$\sum_{i=1}^{\nn}\ell_k^i=k$, we have 
\begin{equation}
\label{EPfingraph2}
{\mathbb{P}}^{\GG}(R_k^i=\ell_k^i, i=1,\ldots, \nn, A_{v,k})
\leq \frac{\prod_{e\in E(\GG)} w(\ell_0^e)}
{\min_{e \in E(\GG)}\, w(\ell_0^e)}
\cdot 
\sum_{\sigma\in S(\nn)}\frac{\sum_{i=1}^{\nn}w\left(\ell_k^{i}+\ell_0^{e_{\sigma(i)}}\right)}
{\prod_{i=1}^{\nn} w\left(\ell_k^i+\ell_0^{e_{\sigma(i)}}\right)}.
\end{equation}
and 
\begin{equation}
\label{EPfingraph3}
{\mathbb{P}}^{\GG}(R_k^i=\ell_k^i, i=1,\ldots, \nn)
\leq |V(\GG)| \cdot\frac{\prod_{e\in E(\GG)} w(\ell_0^e)}
{\min_{e \in E(\GG)}\, w(\ell_0^e)}
\cdot 
\sum_{\sigma\in S(\nn)}\frac{\sum_{i=1}^{\nn}w\left(\ell_k^{i}+\ell_0^{e_{\sigma(i)}}\right)}
{\prod_{i=1}^{\nn} w\left(\ell_k^i+\ell_0^{e_{\sigma(i)}}\right)}.
\end{equation}
\end{proposition}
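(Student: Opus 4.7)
The plan is to deduce both inequalities from Proposition \ref{Pfingraph1} via a union bound over permutations, together with two elementary monotonicity estimates. First, I would decompose the event governing the order statistics: by definition, $(R_k^1,\dots,R_k^{\nn})$ is obtained by sorting $(X_k^{e_i}-\ell_0^{e_i})_{i=1}^{\nn}$ in non-increasing order, so for any non-increasing tuple $(\ell_k^1,\dots,\ell_k^{\nn})$ with $\sum_i \ell_k^i = k$ one has
\[
\{R_k^i = \ell_k^i,\ i=1,\dots,\nn\}
= \bigcup_{\sigma \in S(\nn)} \bigl\{X_k^{e_{\sigma(i)}}-\ell_0^{e_{\sigma(i)}} = \ell_k^i,\ i=1,\dots,\nn\bigr\}.
\]
The union may overlap when some $\ell_k^i$ values coincide, but overcounting is harmless since we only need an upper bound.

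Intersecting with $A_{v,k}$ and applying the union bound, each summand is exactly of the form handled by Proposition \ref{Pfingraph1}: set $\ell_k^{e_{\sigma(i)}} := \ell_k^i$, so that $\sum_{e \in E(\GG)} \ell_k^e = k$ and the right-hand side of \eqref{EPfingraph1} becomes
\[
\frac{\prod_{e\in E(\GG)} w(\ell_0^e)}{\min_{e\in \NN_{v_0}} w(\ell_0^e)} \cdot \frac{\sum_{e\in \NN_v} w(\ell_k^e+\ell_0^e)}{\prod_{i=1}^{\nn} w(\ell_k^i + \ell_0^{e_{\sigma(i)}})}.
\]
I would then apply two crude monotonicity bounds to make the expression $\sigma$-friendly: since $\NN_{v_0}\subseteq E(\GG)$, we have $\min_{e\in \NN_{v_0}} w(\ell_0^e) \geq \min_{e\in E(\GG)} w(\ell_0^e)$, which weakens the prefactor into the $\sigma$-independent constant appearing in \eqref{EPfingraph2}. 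Similarly, since $\NN_v\subseteq E(\GG)$ and $w>0$,
\[
\sum_{e\in \NN_v} w(\ell_k^e+\ell_0^e) \leq \sum_{e\in E(\GG)} w(\ell_k^e+\ell_0^e) = \sum_{i=1}^{\nn} w\bigl(\ell_k^i+\ell_0^{e_{\sigma(i)}}\bigr),
\]
which is the numerator appearing in \eqref{EPfingraph2}. Pulling the $\sigma$-independent prefactor outside the sum over $S(\nn)$ yields \eqref{EPfingraph2} directly.

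For \eqref{EPfingraph3} I would simply decompose along the position of the walker at time $k$, writing
\[
{\mathbb{P}}^{\GG}(R_k^i = \ell_k^i,\ i=1,\dots,\nn) = \sum_{v\in V(\GG)} {\mathbb{P}}^{\GG}(R_k^i = \ell_k^i,\ i=1,\dots,\nn,\ A_{v,k}),
\]
and since the bound just established for the $v$-dependent probability is uniform in $v$, summing produces the factor $|V(\GG)|$. There is no serious obstacle here: the only delicate point is correctly bookkeeping the relabeling in the product $\prod_{i=1}^{\nn} w(\ell_k^i+\ell_0^{e_{\sigma(i)}})$, so that the permutation acts on the \emph{initial} weights while the ordered values $\ell_k^i$ are held fixed; the rest is a union bound plus two one-line comparisons.
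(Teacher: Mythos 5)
Your argument is correct and coincides with the paper's proof: both decompose $\{R_k^i=\ell_k^i\}$ as a union over $S(\nn)$, apply the union bound together with Proposition~\ref{Pfingraph1}, then use $\min_{e\in E(\GG)}w(\ell_0^e)\le\min_{e\in\NN_{v_0}}w(\ell_0^e)$ and $\sum_{e\in\NN_v}\le\sum_{e\in E(\GG)}$, and finally sum over $v$ for \eqref{EPfingraph3}. The only (cosmetic) difference is that you parametrize the union as $\{X_k^{e_{\sigma(i)}}-\ell_0^{e_{\sigma(i)}}=\ell_k^i\}$ rather than $\{X_k^{e_i}-\ell_0^{e_i}=\ell_k^{\sigma(i)}\}$, which lets you land directly on the stated form and skips the paper's final re-indexing identity.
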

\begin{proof}
We will only prove (\ref{EPfingraph2}) as (\ref{EPfingraph3}) follows immediately from (\ref{EPfingraph2}) by summing over all possible vertices $A_{v,k}, v\in V(\GG)$. 

We note first that the event $\{R_k^i=\ell_k^i, i=1,\ldots, \nn\}$ is a union over $\sigma\in S(\nn)$ of the events $\{X_k^{e_{i}}-\ell_0^{e_{i}}=\ell_k^{\sigma(i)}, i=1,\ldots,\nn\}$.
Assume $\NN_v=\{e_{j_1},\ldots, e_{j_{d_v}}\}, j_1,\ldots, j_{d_v}\in \{1,\ldots, \nn\}$,  with $j_1\neq\ldots\neq j_{d_v}$. Then
\begin{eqnarray*}
{\mathbb{P}}^{\GG}(R_k^i=\ell_k^i, i=1,\ldots, \nn, A_{v,k})&\le&\sum_{\sigma\in S(\nn)} {\mathbb{P}}^{\GG}(X_k^{e_i}-\ell_0^{e_i}=\ell_k^{\sigma(i)}, i=1,\ldots,\nn, A_{v,k})\\
&\le&\sum_{\sigma\in S(\nn)}\,\,\frac{\prod_{i=1}^{\nn} w(\ell_0^i)}
{\min_{e \in {\NN}_{v_0}}\, w(\ell_0^e)}
\cdot 
\frac{\sum_{s=1}^{d_v}w(\ell_k^{\sigma(j_s)}+\ell_0^{e_{j_s}})}
{\prod_{i=1}^{\nn} w\left(\ell_k^{\sigma(i)}+\ell_0^{e_i}\right)}\\
&\le&\frac{\prod_{e\in E(\GG)} w(\ell_0^e)}
{\min_{e \in E(\GG)}\, w(\ell_0^e)}\cdot\sum_{\sigma\in S(\nn)}\frac{\sum_{i=1}^{\nn}w\left(\ell_k^{\sigma(i)}+\ell_0^{e_i}\right)}
{\prod_{i=1}^{\nn} w\left(\ell_k^{\sigma(i)}+\ell_0^{e_i}\right)},
\end{eqnarray*}
where for the second inequality in the above we used (\ref{EPfingraph1}), for the third inequality we used that
$$\prod_{e\in E(\GG)} w(\ell_0^e)=\prod_{i=1}^{\nn} w(\ell_0^{e_i})~~~~\mbox{and}~~~~\sum_{s=1}^{d_v}w(\ell_k^{\sigma(j_s)}+\ell_0^{e_{j_s}})\le \sum_{i=1}^{\nn}w\left(\ell_k^{\sigma(i)}+\ell_0^{e_i}\right),$$ 
and that $\min_{e \in E(\GG)}\, w(\ell_0^e)\le \min_{e \in {\NN}_{v_0}}\, w(\ell_0^e)$. The statement in  (\ref{EPfingraph2}) follows now by noting that
$$\sum_{\sigma\in S(\nn)}\frac{\sum_{i=1}^{\nn}w\left(\ell_k^{\sigma(i)}+\ell_0^{e_i}\right)}
{\prod_{i=1}^{\nn} w\left(\ell_k^{\sigma(i)}+\ell_0^{e_i}\right)}=\sum_{\sigma\in S(\nn)}\frac{\sum_{i=1}^{\nn}w\left(\ell_k^{i}+\ell_0^{e_{\sigma(i)}}\right)}
{\prod_{i=1}^{\nn} w\left(\ell_k^{i}+\ell_0^{e_{\sigma(i)}}\right)}.$$
\end{proof}
\begin{remark}
\label{reducew}
\begin{itemize}
\item [(a)] Looking carefully at (\ref{EPfingraph2}) and (\ref{EPfingraph3}), we notice that the only non-initial weights which contribute to the bounds for the order statistics probabilities are those for edges which are traversed at least once up to time $k$. This is because the edges that are not traversed
have $\ell_k^i =0$, and therefore the associated weights cancel between the product in the numerator and the product in the denominator. However, in our computations below, we will need to take into account all the weights in the bound, including those for non-traversed edges, as we will sum over all possible values for the order statistics of the edge weights.

\item [(b)] Since in computing the various probabilities in Proposition \ref{Pfingraph1} by means of Proposition 19 from \cite{CotarLimic} we can re-scale $w(k)$ to $w(k)/w_{\max}(\ell_0),$ where $w_{\max}(\ell_0):=\max_{e\in E(\GG)} w(\ell_0^e)$, we will take in our computations below $w(\ell_0^e)\le 1$ for all $e\in E(\GG)$. This means that the term $\prod_{e\in E(\GG)} w(\ell_0^e)$ from the formulas in Proposition \ref{edgegen} will not appear in our computations below.

\item [(c)] For star graphs, (\ref{EPfingraph1}) is optimal for super-linear weights, in the sense that there is also a lower bound of the same order, as proved in Proposition 2 from \cite{CotarLimic}.
\end{itemize}
\end{remark}

\subsubsection{Attraction to one edge on finite graphs}

We will first give an alternative definition of the event $\{\GG_\infty~\mbox{has only one edge}\}$ by using the order statistics for the number of edge traversals. First, since by definition the sequence of random variables $R_k^{2}, k\ge 1,$ is non-decreasing  in $k$ there exists $R_\infty^{2}=\lim_{k\rightarrow\infty} R_k^{2}$, which may or may not be finite. We have immediately for any finite graph that
$$\{\GG_\infty ~\mbox{has only one edge}\}=\{R_\infty^2<\infty\}.$$
We will use our alternative definition above to prove in this section
\begin{theorem}
\label{mainedgefin}
Let $\GG$ be a finite graph with $\nn\ge 3$ edges. If $w$ satisfies (\ref{Azero}), the edge-reinforced
random walk on $\GG$ traverses a random attracting edge at all large times a.s.,
that is
$${\mathbb{P}}^{\GG}(\GG_\infty~\mbox{has only one edge}) = 1.$$
\end{theorem}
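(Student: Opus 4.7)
The plan is to use the equivalence
\begin{equation*}
\{\GG_\infty~\mbox{has only one edge}\} = \{R_\infty^2 < \infty\}
\end{equation*}
noted above, reducing the theorem to showing $\mathbb{P}^\GG(R_\infty^2 < \infty) = 1$. Since $R_k^2$ is non-decreasing in $k$, we have $\mathbb{P}^\GG(R_\infty^2 \geq m) = \sup_k \mathbb{P}^\GG(R_k^2 \geq m)$, so it is enough to produce an upper bound on $\mathbb{P}^\GG(R_k^2 \geq m)$ that is uniform in $k$ and tends to zero as $m \to \infty$.

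For this I would start from the union bound
\begin{equation*}
\mathbb{P}^\GG(R_k^2 \geq m) \leq \sum_{\{e,f\} \subset E(\GG), \, e \neq f} \mathbb{P}^\GG\bigl(X_k^e - \ell_0^e \geq m,\, X_k^f - \ell_0^f \geq m\bigr),
\end{equation*}
which holds because $R_k^2 \geq m$ forces at least two coordinates of $(X_k^{e'} - \ell_0^{e'})_{e' \in E(\GG)}$ to be $\geq m$. Each pair probability is then decomposed as a sum over configurations $(\ell^{e'})_{e' \in E(\GG)}$ with $\ell^e, \ell^f \geq m$, $\ell^{e'} \geq 0$ and $\sum_{e'} \ell^{e'} = k$, and over the terminal vertex $v$ via the event $A_{v,k}$. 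Applying Proposition \ref{Pfingraph1} to each such term and using that $\sum_v \sum_{e'' \in \NN_v} w(\ell^{e''} + \ell_0^{e''}) = 2 \sum_{e''} w(\ell^{e''} + \ell_0^{e''})$ (each edge has exactly two endpoints) yields a bound of the form $C \sum_{e'' \in E(\GG)} \prod_{e' \neq e''} 1/w(\ell^{e'} + \ell_0^{e'})$, where $C = C(\GG, \ell_0)$ is finite because $\GG$ is finite (in particular $\min_{e' \in E(\GG)} w(\ell_0^{e'}) > 0$).

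The remaining configuration sum I would handle by splitting into the three cases $e'' = e$, $e'' = f$ and $e'' \notin \{e, f\}$. In each case exactly one coordinate of $(\ell^{e'})$ is pinned down by $\sum_{e'} \ell^{e'} = k$; dropping this constraint as an upper bound decouples the remaining coordinates and factorises the sum into tail sums $\sum_{\ell \geq m} 1/w(\ell + \ell_0^{\cdot})$, coming from whichever of $e, f$ still appears in the product, and full sums $\sum_{\ell \geq 0} 1/w(\ell + \ell_0^{e'})$ for the rest. By (\ref{Azero}) together with the finiteness of $\GG$, all full sums are finite and all tail sums vanish as $m \to \infty$. Summing the resulting estimate over the finitely many pairs $\{e,f\}$ delivers the required uniform-in-$k$ bound.

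The main technical point is that the constraint $\sum_{e'} \ell^{e'} = k$ — which at first sight couples the estimate to $k$ and threatens uniformity — can be dropped without destroying the decay in $m$: the coordinate it pins down is precisely the one whose weight is absent from the product appearing in the Proposition \ref{Pfingraph1} bound, so no essential factor of $1/w$ is lost. A preliminary normalisation $w(\ell_0^{e}) \leq 1$ via Remark \ref{reducew}(b) further tidies the constants by removing $\prod_{e'} w(\ell_0^{e'})$ from the bound.
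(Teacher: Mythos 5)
Your proof is correct, and it takes a genuinely different and noticeably more elementary route than the paper's. Both proofs reduce to showing $R_\infty^2 < \infty$ a.s.\ and both rest on Proposition \ref{Pfingraph1}, but from there the routes diverge. The paper passes through Proposition \ref{edgegen} (which re-expresses the events in terms of order statistics, at the cost of a sum over all permutations in $S(\nn)$), then through Proposition \ref{2orderstat} (a multi-case computation with the auxiliary kernels $Q_m$), and finally invokes Lemma \ref{gexistence} to manufacture an increasing unbounded $g$ with $\sum_\ell g(\ell+\ell_0)/w(\ell+\ell_0)<\infty$, concluding via $\mathbb{E}^\GG[g(R_\infty^2)]<\infty$. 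You instead avoid order statistics and the $g$-function entirely: you use a union bound over the $\binom{\nn}{2}$ unordered pairs $\{e,f\}$ of candidate heavy edges, apply Proposition \ref{Pfingraph1} directly to each terminal configuration, sum the numerator over the terminal vertex to collapse $\sum_v\sum_{e'\in\NN_v}$ into $2\sum_{e''}$, and then decouple the configuration sum by dropping the constraint $\sum_{e'}\ell^{e'}=k$ --- correctly observing that the coordinate the constraint pins down is exactly the one whose $w$-factor is absent from the product, so dropping the constraint costs nothing. This produces a bound on $\mathbb{P}^\GG(R_k^2\ge m)$ that is uniform in $k$, made of tail sums $\sum_{\ell\ge m}1/w(\ell+\ell_0^\cdot)$ and full sums, and vanishes as $m\to\infty$ by (\ref{Azero}); monotonicity of $R_k^2$ in $k$ then gives $R_\infty^2<\infty$ a.s. What your shortcut buys is a cleaner argument with no permutation sums (so the combinatorial overhead is $\binom{\nn}{2}$ rather than $\nn!$) and no need for the $g$-function lemma. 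What the paper's heavier machinery buys is extra information: bounding $\mathbb{E}^\GG[g(R_\infty^2)]$ gives quantitative tail control that is reused in Remark \ref{conjecture}(d) (asymptotics of the attraction time), and the order-statistics framework is what carries over to the VRRW case where the analogous estimates (on $R_k^3$, via Proposition \ref{2orderstatv}) are substantially more delicate and the pair/triple union-bound approach would not factorize as cleanly. For the ERRW theorem in isolation your argument is a valid and leaner proof.
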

In preparation for the proof of attraction to one edge, we will first obtain upper bounds for the distribution of $R_k^{2},$ that is, the distribution of the 2nd largest number of edge traversals at time $k$. For simplicity and clarity of computations, we will consider in detail only the case where all edges have \textit{equal} initial weights $\ell_0^e\equiv\ell_0\in\R^+$, the case with general initial weights $\ell_0^e\in\R^+,e\in E(\GG),$ following by similar arguments by means of Proposition \ref{edgegen}.  

Throughout the paper, we will denote by $[x]$ the integer part of $x$. Moreover, for all $a,c, m\in\mathbb{N},$ $m\ge 1$, and for all $b\in\R^+$, we will denote by
\begin{equation}
\label{eqndefq}
Q_{m}(a; b;c):=\sum_{(h^1,\ldots, h^m)\in\mathbb{N}^m\,:\,0\le h^m\le\ldots\le h^1\le c\atop h^1+\ldots+h^m=a} \frac{1}{\prod_{j=1}^{m} w(b+h^j)},
\end{equation}
with the convention that $Q_{m}(a; b;c)=0$ if there exists no $(h^1,\ldots, h^m)\in \mathbb{N}^m$\ with $0\le h^m\le\ldots\le h^1\le c$ and $h^1+\ldots+h^m=a$.
\begin{proposition}
\label{2orderstat}
Assume that $w$ satisfies (\ref{Azero}). We will show
\begin{itemize}
\item [(a)] For all $m, a, c, j\in\mathbb{N}, m\ge 1$, and for all $b\in\R^+$, we have 
\begin{equation}
\label{rec2}
\sum_{s=0}^{j}Q_{m}(s+a; b;c)\le \sum_{s=0}^{j}Q_{m}(s+a; b;\infty)\le (c(b))^m,~~\mbox{with}~~c(b):= \sum_{\ell=0}^\infty \frac{1}{w(\ell+b)}.
\end{equation}
\end{itemize}
\item [(b)] Let $\nn\ge 3$. For $\ell_0\in\R^+$, let $\ell^e_0 = \ell_0$ for all edges $e\in E(\GG)$. Then for all $k\ge 1$ and all $0\le\ell_k^{2}\le [k/2], \ell_k^2\in\mathbb{N}$, we have for some $C(w,\nn, |V(\GG)|,\ell_0)>0$ which depends only on $w,\nn, |V(\GG)|$ and $\ell_0$ 
\begin{eqnarray}
\label{bigl2}
{\mathbb{P}}^{\GG}(R_k^{2}&=&\ell_k^{2})\le C(w,\nn, |V(\GG)|,\ell_0)\bigg[\frac{1}{w(\ell_k^2+\ell_0)}\nonumber\\
&&\,\,\,\,\,\,\,\,\,\,\,\,\,\,\,\,\,\,\,\,\,\,\,\,\,\,\,\,\,\,\,\,\,+\sum_{i=\max\left([k/\nn],\ell_k^2\right)}^{k-\ell_k^2}\frac{1}{w(i+\ell_0)}Q_{\nn-2}\bigg(k-i-\ell_k^2;\ell_0; \infty\bigg)\bigg].
\end{eqnarray}
\end{proposition}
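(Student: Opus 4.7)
Part (a) is purely combinatorial. The first inequality is immediate, since enlarging the constraint $h^1 \le c$ to $h^1 < \infty$ only introduces non-negative terms. For the second inequality, observe that $\sum_{s=0}^{j} Q_m(s+a; b; \infty)$ rewrites as a sum over ordered tuples $(h^1, \ldots, h^m) \in \mathbb{N}^m$ with $0 \le h^m \le \ldots \le h^1$ whose total lies in $[a, a+j]$ of $\prod_{r=1}^m 1/w(b+h^r)$. Dropping both the ordering and the total-sum constraints dominates this by the factorized product $\prod_{r=1}^m \sum_{h \ge 0} 1/w(b+h) = (c(b))^m$, which is finite by (\ref{Azero}).

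For part (b), the plan is to start from the bound (\ref{EPfingraph3}) of Proposition \ref{edgegen} and sum over all admissible configurations consistent with $\{R_k^2 = \ell_k^2\}$. With equal initial weights $\ell_0^e \equiv \ell_0$, the summand in (\ref{EPfingraph3}) is independent of $\sigma$, so the sum over $S(\nn)$ collapses to a factor $\nn!$; after absorbing $\prod_{e\in E(\GG)} w(\ell_0^e)/\min_{e\in E(\GG)} w(\ell_0^e)$ into the constant via Remark \ref{reducew}(b), one is left with
\begin{equation*}
\mathbb{P}^{\GG}(R_k^i = \ell_k^i, i=1,\ldots,\nn) \le C'\, \frac{\sum_{i=1}^\nn w(\ell_k^i + \ell_0)}{\prod_{i=1}^\nn w(\ell_k^i + \ell_0)} = C' \sum_{i=1}^\nn \frac{1}{\prod_{j \ne i} w(\ell_k^j + \ell_0)},
\end{equation*}
with $C' = |V(\GG)|\, \nn!$. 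To obtain $\mathbb{P}^{\GG}(R_k^2 = \ell_k^2)$ I then sum over the admissible tuples $(\ell_k^1, \ell_k^3, \ldots, \ell_k^\nn)$ satisfying $\ell_k^1 \ge \ell_k^2$, $\ell_k^2 \ge \ell_k^3 \ge \ldots \ge \ell_k^\nn \ge 0$, and $\sum_r \ell_k^r = k$ (which forces $\ell_k^1 \ge [k/\nn]$).

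I would then split the $\nn$ contributions according to the value of $i$. The $i=1$ piece factors out $1/w(\ell_k^2 + \ell_0)$ and leaves an inner sum $\sum_{\ell_k^1 \ge [k/\nn]}^{k - \ell_k^2} Q_{\nn-2}(k - \ell_k^1 - \ell_k^2; \ell_0; \ell_k^2)$, which is uniformly bounded by $(c(\ell_0))^{\nn-2}$ via part (a); this produces the first term of (\ref{bigl2}). The $i=2$ piece coincides by the definition (\ref{eqndefq}) of $Q_{\nn-2}$ with $\sum_{\ell_k^1 = \max([k/\nn], \ell_k^2)}^{k - \ell_k^2} \frac{1}{w(\ell_k^1 + \ell_0)} Q_{\nn-2}(k - \ell_k^1 - \ell_k^2; \ell_0; \ell_k^2)$, and is dominated by the second term of (\ref{bigl2}) after enlarging the last argument of $Q_{\nn-2}$ from $\ell_k^2$ to $\infty$. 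For $i \ge 3$ one uses the algebraic identity $\frac{1}{\prod_{j \ne i} w(\ell_k^j + \ell_0)} = \frac{w(\ell_k^i + \ell_0)}{w(\ell_k^2 + \ell_0)} \cdot \frac{1}{\prod_{j \ne 2} w(\ell_k^j + \ell_0)}$ together with the ordering $\ell_k^i \le \ell_k^2$ to control $\sum_{i \ge 3} w(\ell_k^i + \ell_0)/w(\ell_k^2 + \ell_0)$ by a constant multiple of $\nn - 2$, reducing the $i \ge 3$ contribution to at most $(\nn-2)$ times the $i = 2$ piece and absorbing it into the final constant.

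I expect the main technical obstacle to be precisely this last step: the clean bound $\sum_{i \ge 3} w(\ell_k^i + \ell_0) \le (\nn-2) w(\ell_k^2 + \ell_0)$ is transparent when $w$ is non-decreasing, but in the general setting allowed by (\ref{Azero}) one must exploit the reciprocal-summability hypothesis to control the local behavior of $w$ on $[0, \ell_k^2 + \ell_0]$, potentially by folding a factor $\max_{h \le \ell_k^2} w(h + \ell_0)/w(\ell_k^2 + \ell_0)$ into the constant $C(w, \nn, |V(\GG)|, \ell_0)$.
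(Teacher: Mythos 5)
Part (a) of your proposal is correct and follows the same route as the paper: drop the upper constraint $h^1\le c$, drop the ordering and lower constraint, and factorize. Your handling of the $i=1$ and $i=2$ pieces of the decomposition $\sum_{i=1}^\nn 1/\prod_{j\ne i}w(\ell_k^j+\ell_0)$ in part (b) also matches the paper.

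The gap is in the $i\ge 3$ piece. You rewrite $\frac{1}{\prod_{j\ne i}w(\ell_k^j+\ell_0)}=\frac{w(\ell_k^i+\ell_0)}{w(\ell_k^2+\ell_0)}\cdot\frac{1}{\prod_{j\ne 2}w(\ell_k^j+\ell_0)}$ and then try to bound $\sum_{i\ge 3}w(\ell_k^i+\ell_0)/w(\ell_k^2+\ell_0)$ using $\ell_k^i\le\ell_k^2$. This requires $w(h+\ell_0)\le\mathrm{const}\cdot w(\ell_k^2+\ell_0)$ uniformly over $0\le h\le\ell_k^2$, which is essentially a monotonicity condition. Assumption (\ref{Azero}) does not give this: it only forces $1/w$ to be summable, so $w$ can have arbitrarily large upward spikes on a sparse set (e.g.\ $w(i)=i^2$ except $w(i)=2^i$ when $i$ is a perfect square still satisfies (\ref{Azero}), but $\max_{h\le\ell_k^2}w(h+\ell_0)/w(\ell_k^2+\ell_0)$ is unbounded as $\ell_k^2\to\infty$). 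Your proposed repair of absorbing $\max_{h\le\ell_k^2}w(h+\ell_0)/w(\ell_k^2+\ell_0)$ into the constant therefore fails, because that quantity depends on $\ell_k^2$, whereas the constant $C(w,\nn,|V(\GG)|,\ell_0)$ in (\ref{bigl2}) must not.

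The fix, and what the paper does, is to avoid any ratio of $w$-values for $i\ge 3$. Instead, factor only $1/w(\ell_k^2+\ell_0)$ out of $\frac{1}{\prod_{j\ne i}w(\ell_k^j+\ell_0)}=\frac{1}{w(\ell_k^2+\ell_0)}\cdot\frac{1}{\prod_{j\ne 2,i}w(\ell_k^j+\ell_0)}$, and observe that after this the product has only $\nn-2$ factors while the configuration sum still ranges over $\nn-1$ variables. The missing variable $\ell_k^i$ (constrained by $0\le\ell_k^i\le\ell_k^2$) becomes a \emph{free} outer summation index: after dropping some order constraints the inner sum over the remaining $\nn-2$ variables is a $Q_{\nn-2}$, and then $\sum_{\ell_k^i=0}^{\ell_k^2}Q_{\nn-2}(k-\ell_k^2-\ell_k^i;\ell_0;\infty)\le(c(\ell_0))^{\nn-2}$ is exactly what part (a) delivers. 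This gives each $i\ge 3$ piece a bound of $(c(\ell_0))^{\nn-2}/w(\ell_k^2+\ell_0)$, identical to your $i=1$ piece, with no monotonicity assumption on $w$ at all; all these contributions go into the first term of (\ref{bigl2}).
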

\begin{proof}


\vspace{1mm}

(a) We trivially have for all $m, a, c, j\in\mathbb{N}, m\ge 1$, and for all $b\in\R^+$

\begin{eqnarray*}
\sum_{s=0}^{j}Q_{m}(s+a; b;c)
&\le& \sum_{s=0}^j\sum_{(h^1,\ldots, h^m)\in\mathbb{N}^m\,:\,0\le h^m\le\ldots\le h^1\le \infty\atop h^1+\ldots+h^m=s+a} \frac{1}{\prod_{i=1}^{m} w(b+h^i)}=\sum_{s=0}^{j}Q_{m}(s+a; b;\infty).
\end{eqnarray*}
To show the second part of (\ref{rec2}), we have
\begin{eqnarray*}
\sum_{s=0}^{j}Q_{m}(s+a; b;\infty)&\le& \sum_{s=0}^\infty\sum_{(h^1,\ldots, h^m)\in\mathbb{N}^m\atop h^1+\ldots+h^m=s+a} \frac{1}{\prod_{i=1}^{m} w(b+h^i)}\le \sum_{s=0}^\infty\sum_{(h^1,\ldots, h^m)\in\mathbb{N}^m\atop h^1+\ldots+h^m=s} \frac{1}{\prod_{i=1}^{m} w(b+h^i)}\\
&=&\prod_{i=1}^m\bigg[\sum_{h^i=0}^\infty \frac{1}{w(h^i+b)}\bigg]= (c(b))^m,
\end{eqnarray*}
where for the first inequality we removed the restriction on $Q_m$ that $h^m\le\ldots\le h^1$, and for the second inequality we removed the restriction that $h^1+\ldots+h^m\ge a$.

\vspace{1.5mm}

(b) Fix $k\ge 1$ and $\ell_k^{2}$ such that $0\le\ell_k^{2}\le [k/2]$. 
By means of (\ref{EPfingraph3}) from Proposition \ref{edgegen}, we have
\begin{eqnarray}
\label{eqmain}
{\mathbb{P}}^{\GG}(R_k^{2}=\ell_k^{2})&\le&\sum_{(\ell_k^1,\ell_k^3,\ldots, \ell_k^\nn)\in \mathbb{N}^{\nn-1}: \,0\le \ell_k^\nn\le\ldots\le\ell_k^2\le\ell_k^1\atop \sum_{i=1, i\neq 2}^{\nn}\ell_k^i={k}-\ell_k^2} {\mathbb{P}}^{\GG}(R_k^i=\ell_k^i, i=1,\ldots, \nn)\nonumber\\
& \leq&\frac{|V(\GG)|\cdot\nn!}{w(\ell_0)}
\cdot 
\sum_{(\ell_k^1,\ell_k^3,\ldots, \ell_k^\nn)\in {\mathbb{N}}^{\nn-1}: \,0\le \ell_k^\nn\le\ldots\le\ell_k^2\le\ell_k^1\atop \sum_{i=1, i\neq 2}^{\nn}\ell_k^i={k}-\ell_k^2} \frac{\sum_{i=1}^\nn w(\ell_k^i+\ell_0)}
{\prod_{i=1}^{\nn} w(\ell_k^i+\ell_0)}\nonumber\\
&\le&\frac{|V(\GG)|\cdot\nn!}{w(\ell_0)}
\cdot \bigg(\frac{1}{w(\ell_k^2+\ell_0)}\cdot
 \sum_{i=1\atop i\neq 2}^\nn \sum_{(\ell_k^1,\ell_k^3,\ldots, \ell_k^\nn)\in {\mathbb{N}}^{\nn-1}: \,0\le \ell_k^\nn\le\ldots\le\ell_k^2\le\ell_k^1\atop \sum_{j=1, j\neq 2}^{\nn}\ell_k^j={k}-\ell_k^2}\frac{1}
{\prod_{j=1,j\neq 2,i}^{\nn}\,\,\, w(\ell_k^j+\ell_0)}\nonumber\\
&&+\sum_{(\ell_k^1,\ell_k^3,\ldots, \ell_k^\nn)\in {\mathbb{N}}^{\nn-1}: \,0\le \ell_k^\nn\le\ldots\le\ell_k^2\le\ell_k^1\atop \sum_{j=1, j\neq 2}^{\nn}\ell_k^j={k}-\ell_k^2} \frac{1}{w(\ell_k^1+\ell_0)
\prod_{j=3}^{\nn} w(\ell_k^j+\ell_0)}\bigg).
\end{eqnarray}
For the first inequality in the above, we recall that by Remark (\ref{reducew}) (b) we have $w(\ell_0^e)\le 1$, $e\in E(\GG)$. In the second inequality in (\ref{eqmain}), we split the sum from the first inequality in $\nn-1$ sums which contain $w(\ell_k^2+\ell_0)$ but do not contain one of the other $w(\ell_k^i+\ell_0), 1\le i\le\nn, i\neq 2,$ and the last sum which contains all $w(\ell_k^i+\ell_0), 1\le i\le\nn, i\neq 2,$ but does not contain $w(\ell_k^2+\ell_0)$.

It remains to bound the two sums on the right-hand side of the last inequality in (\ref{eqmain}); in order to do so, we will re-write them in terms of $Q_{\nn-2}$. We focus first on the first double sum term. For $3\le i\le\nn$, all the terms in the sum below do not contain $w(\ell_k^i+\ell_0)$, with $\ell_k^i\le\ell_k^2$. Then 
\begin{eqnarray}
\label{not1}
\lefteqn{\sum_{(\ell_k^1,\ell_k^3,\ldots, \ell_k^\nn)\in {\mathbb{N}}^{\nn-1}: \,0\le \ell_k^\nn\le\ldots\le\ell_k^2\le\ell_k^1\atop \sum_{i=1, i\neq 2}^{\nn}\ell_k^i={k}-\ell_k^2}\frac{1}
{\prod_{j=1,j\neq 2,i}^{\nn}\,\,\, w(\ell_k^j+\ell_0)}}\nonumber\\
&\le&\sum_{\ell_k^i=0}^{\ell_k^2}\,\,\,\,\,\sum_{{(\ell_k^1,\ell_k^{3},\ldots, \ell_k^{i-1})\in {\mathbb{N}}^{i-2}: \,\ell_k^i\le \ell_k^{i-1}\le\ldots\le\ell_k^{3}\le\ell_k^2\le \ell_k^1\atop (\ell_k^{i+1},\ldots, \ell_k^{\nn})\in {\mathbb{N}}^{\nn-i}: \,0\le \ell_k^\nn\le\ldots\le\ell_k^{i+1}\le\ell_k^i}\atop \sum_{j=1,j\neq 2,i}^{\nn}\ell_k^j={k}-\ell_k^2-\ell_k^i} \frac{1}{\prod_{j=1, j\neq 2,i}^{\nn} w(\ell_k^j+\ell_0)}\nonumber\\
&\le& \sum_{\ell_k^i=0}^{\ell_k^2}\,\,\,\,\,\sum_{(h^1,\ldots, h^{\nn-2})\in {\mathbb{N}}^{\nn-2}: \,0\le h^{\nn-2}\le\ldots\le h^{1}\atop \sum_{j=1}^{\nn-2} h^j={k}-\ell_k^2-\ell_k^i} \frac{1}{\prod_{j=1}^{\nn-2} w(h^j+\ell_0)}\nonumber\\
&\le&\sum_{\ell_k^i=0}^{\ell_k^2} Q_{\nn-2}(k-\ell_k^2-\ell_k^i;\ell_0;\infty)\le (c(\ell_0))^{\nn-2}.
\end{eqnarray}
For the first inequality in (\ref{not1}), we split the initial sum in a double sum, first after all the values that $\ell_k^i$ can take, and then after all the values that the remaining $\ell_k^j$ terms, $j\neq 2, i$, can take, given $\ell_k^i$.  For the second inequality, we removed in the inner sum of the first inequality the restriction on the $(\ell_k^{i-1},\ell_k^{i+1})$ that $\ell_k^{i+1}\le\ell_k^i\le\ell_k^{i-1}$, for the third inequality we used the definition of $Q_{\nn-2}$, and for the last inequality we applied (\ref{rec2}).

By a similar argument, we obtain for $i=1$ (this means that all the terms in the sum below contain $w(\ell_k^i+\ell_0), 3\le i\le\nn$, but do not contain $w(\ell_k^1+\ell_0)$)
\begin{eqnarray}
\label{yes1}
\lefteqn{\sum_{(\ell_k^1,\ell_k^3,\ldots, \ell_k^\nn)\in {\mathbb{N}}^{\nn-1}: \,0\le \ell_k^\nn\le\ldots\le\ell_k^2\le\ell_k^1\atop \sum_{i=1, i\neq 2}^{\nn}\ell_k^i={k}-\ell_k^2}\frac{1}
{\prod_{j=3}^{\nn}\,\,\, w(\ell_k^j+\ell_0)}}\nonumber\\
&=&\sum_{s=\max\left([{k}/\nn],\ell_k^2\right)}^{{k}-\ell_k^2}\,\,\sum_{(\ell_k^{3},\ldots, \ell_k^{\nn})\in {\mathbb{N}}^{\nn-2}: \,0\le \ell_k^{\nn}\le\ldots\le\ell_k^{3}\le\ell_k^2\atop \sum_{j=3}^{\nn}\ell_k^j={k}-\ell_k^2-s} \frac{1}{\prod_{j=3}^{\nn}\,\,\, w(\ell_k^j+\ell_0)}\nonumber\\
&=&\sum_{s=\max\left([{k}/\nn],\ell_k^2\right)}^{{k}-\ell_k^2}\,\,\,Q_{\nn-2}({k}-\ell_k^2-s;\ell_0,\ell_k^2)\le (c(\ell_0))^{\nn-2}.
\end{eqnarray}
In the above, we used for the first equality that since $\sum_{i=1}^\nn\ell_k^i=k,$ with $0\le\ell_k^\nn\le\ldots\le\ell_k^2\le\ell_k^1,$ this gives in particular $\nn\ell_k^1\ge k$, from which $\ell_k^1\ge \max([k/\nn],\ell_k^2)$; for the second equality we used the definition of $Q_{\nn-2}$, and for the inequality we used (\ref{rec2}).

Turning now to the 2nd sum term on the right-hand side of (\ref{eqmain}), we have similarly
\begin{eqnarray}
\label{yes2}
\lefteqn{\sum_{(\ell_k^1,\ell_k^3,\ldots, \ell_k^\nn)\in {\mathbb{N}}^{\nn-1}: \,0\le \ell_k^\nn\le\ldots\le\ell_k^2\le\ell_k^1\atop \sum_{j=1, j\neq 2}^{\nn}\ell_k^j={k}-\ell_k^2} \frac{1}{w(\ell_k^1+\ell_0)
\prod_{j=3}^{\nn} w(\ell_k^j+\ell_0)}}\nonumber\\
&=&\sum_{\ell_k^1=\max\left([{k}/\nn],\ell_k^2\right)}^{{k}-\ell_k^2}\frac{1}{w(\ell_k^1+\ell_0)}\sum_{(\ell_k^3,\ldots, \ell_k^\nn)\in {\mathbb{N}}^{\nn-2}: \,0\le \ell_k^\nn\le\ldots\le\ell_k^3\le\ell_k^2\atop \sum_{j=3}^{\nn}\ell_k^j={k}-\ell_k^2-\ell_k^1}\frac{1}
{\prod_{j=3}^{\nn}\,\,\, w(\ell_k^j+\ell_0)}\nonumber\\
&=&\sum_{i=\max\left([{k}/\nn],\ell_k^2\right)}^{{k}-\ell_k^2}\frac{1}{w(i+\ell_0)}Q_{\nn-2}\bigg({k}-i-\ell_k^2;\ell_0;\ell_k^2\bigg).
\end{eqnarray}
By combining (\ref{eqmain}),  (\ref{not1}), (\ref{yes1}) and (\ref{yes2}), we now immediately get (\ref{bigl2}). 
\end{proof}

We are now ready to prove Theorem \ref{mainedgefin}, the main result of this section.

\noindent\textbf{Proof of Theorem \ref{mainedgefin}}
\\
We will show next that $R_\infty^2<\infty$ a.s., which will imply the statement of the theorem. For simplicity and clarity of computations, we will again restrict ourselves to the case with $\ell_0^e\equiv\ell_0\in\R^+,$ $e\in E(\GG)$, the case with general initial weights following by similar arguments by means of (\ref{EPfingraph3}) and of a generalization to Proposition \ref{2orderstat}.

We note first that by Lemma \ref{gexistence}, there exists under assumption (\ref{Azero}) a function $g\colon \bN \rightarrow \left[0, \infty\right)$ such that (i) $g\left(\cdot\right)$ is increasing and $\textstyle \lim_{\ell \uparrow \infty}g\left(\ell\right)=\infty$;  and (ii) $\textstyle M:=\sum _{\ell\geq 1}^{\infty}\frac{g\left(\ell+\ell_0\right)}{w(\ell+\ell_0)} < \infty$. Therefore, since g is an increasing function, we have by the monotone convergence theorem and in view of Proposition \ref{2orderstat} (b), that
\begin{eqnarray}
\label{ineq1}
{\mathbb{E}}^{\GG}(g(R_\infty^2))&=&\lim_{k\rightarrow\infty}{\mathbb{E}}^{\GG}(g(R_k^2))=\lim_{k\rightarrow\infty}\sum_{\ell_k^2=0}^{[{k}/2]} g(\ell_k^2){\mathbb{P}}^{\GG}(R_k^2=\ell_k^2)\nonumber\\
&\le&C(w,\nn, |V(\GG)|,\ell_0)\lim_{k\rightarrow\infty}\sum_{\ell_k^2=0}^{[{k}/2]} g(\ell_k^2+\ell_0)\bigg[\frac{1}{w(\ell_k^2+\ell_0)}\nonumber\\
&&+\sum_{i=\max\left(\left[{k}/\nn\right],\ell_k^2\right)}^{{k}-\ell_k^2}\frac{1}{w(i+\ell_0)}Q_{\nn-2}\bigg({k}-i-\ell_k^2;\ell_0;\infty\bigg)\bigg],
\end{eqnarray}
for some ${C}(w,\nn, |V(\GG)|,\ell_0)>0$. By property (ii) of $g$, we have for all ${k}\ge 1$ 
\begin{equation}
\label{ineq2}
\sum_{\ell_k^2=0}^{\infty} \frac{g(\ell_k^2+\ell_0)}{w(\ell_k^2+\ell_0)}<M<\infty.
\end{equation}
To bound the second sum in (\ref{ineq1}), we have
\begin{eqnarray}
\label{ineq3}
\lefteqn{\sum_{\ell_k^2=0}^{[{k}/2]} g(\ell_k^2+\ell_0)\sum_{i=\max\left([{k}/\nn],\ell_k^2\right)}^{{k}-\ell_k^2}\frac{1}{w(i+\ell_0)}Q_{\nn-2}\bigg({k}-i-\ell_k^2;\ell_0;\infty\bigg)}\nonumber\\
&\le&\sum_{\ell_k^2=0}^{[{k}/2]}\,\,\, \sum_{i=\ell_k^2}^{{k}-\ell_k^2}\frac{g(\ell_k^2+\ell_0)}{w(i+\ell_0)}Q_{\nn-2}\big({k}-i-\ell_k^2;\ell_0;\infty\big)\le \sum_{i=0}^k\sum_{\ell_k^2=0}^{\min(i,k-i)}\frac{g(\ell_k^2+\ell_0)}{w(i+\ell_0)}Q_{\nn-2}\big({k}-i-\ell_k^2;\ell_0;\infty\big)\nonumber\\
&\le&\sum_{i=0}^k \frac{g(i+\ell_0)}{w(i+\ell_0)}\sum_{\ell_k^2=0}^{\min(i,k-i)}Q_{\nn-2}\big({k}-i-\ell_k^2;\ell_0;\infty\big)\le (c(\ell_0))^{\nn-2}\sum_{i=0}^{{k}}\frac{g(i+\ell_0)}{w(i+\ell_0)}\nonumber\\
&\le&( c(\ell_0))^{\nn-2}M,
\end{eqnarray}
where $c(\ell_0):=\sum_{i=0}^\infty 1/{w(\ell_0+i)}$. For the first inequality in (\ref{ineq3}), we removed the restriction that $i\ge [k/\nn]$, for the second inequality we changed the summation order between $\ell_k^2$ and $i$. For the third inequality  we used that $g$ is an increasing function and that $\ell_k^2\le i$ in order to take the $g$ terms out of the inner sums,  for the fourth inequality we used (\ref{rec2}) to eliminate the sums over the $Q_{\nn-2}$ terms, and for the last inequality we used property (ii) of $g$.

It follows now from (\ref{ineq1}), (\ref{ineq2}) and (\ref{ineq3}) that ${\mathbb{E}}^{\GG}(g(R_\infty^2))<\infty$, which implies that a.s. $g(R_\infty^2)<\infty$. Combining this with the fact that $g$ is increasing and $\lim_{\ell\uparrow\infty} g(\ell)=\infty$, we get that a.s. $R_\infty^2<\infty$.
\qed
\begin{remark}
\label{difw}
For arbitrary finite graphs $\GG$, our arguments above can be extended to more general classes of reinforcement. We will give below just two such examples.
\begin{itemize}
\item [(a)] Firstly, even though computationally more intensive, we can easily extend the result in Theorem \ref{mainedgefin} by the above reasoning to the case where each edge $e\in E(\GG)$ has its own weight function $w_e$ satisfying
$$\sum_{\ell=1}^\infty\frac{1}{w_e(\ell+\ell_0^e)}<\infty,\forall e\in E(\GG).$$
More precisely, one first obtains similar bounds to the ones in Proposition \ref{2orderstat}, but depending now on each of the $w_e$. The key point is that, under the above condition, there exists by Lemma \ref{gexistencem} an increasing function $g$, common to all the $w_e$, satisfying the assumptions in the lemma. This allows us to extend the arguments from Theorem \ref{mainedgefin} to this more general setting.
\item [(b)] Assume now that, in the beginning, the walk has the reinforcement function $w_0:\mathbb{N}\rightarrow\R^+$, satisfying (\ref{Azero}) and (\ref{initialweight}). Assume also that the reinforcement function is replaced a countable number of times (at the possibly random times $j_1, j_2,\ldots$), by the new reinforcement functions $w_1, w_2,\ldots:\mathbb{N}\rightarrow\R^+$, all satisfying (\ref{Azero}) and (\ref{initialweight}). Then if there exists $C>0, C^\nn\max_v deg (v)\le 1$, such that for all $i=1,2,\ldots$ 
\begin{equation}
\label{randweight}
\sup_{\ell}\frac{w_i(\ell)}{w_{i-1}(\ell)}\le C,
\end{equation}
we have
\begin{equation}
\label{random1}
{\mathbb{P}}^{\GG}(\GG_\infty~\mbox{has only one edge}) = 1.
\end{equation}
Note that (\ref{randweight}) is fulfilled by a large number of sub- and super-exponential functions, in particular by $w_i(k)=k^{\rho_i}, \rho_0\ge\ldots\ge\rho_m\ldots>1, i=0,1\ldots.$

\begin{proof}
By means of (\ref{randweight}) and of an argument of induction after $k$, we can obtain for all $j_{i-1}\le k\le j_{i}, i=0,1,\ldots,$ a similar inequality to (\ref{EPfingraph1}), but in function of $w_{i-1}$ terms. We can then show (\ref{random1}) by the same arguments as in the proof of Theorem \ref{mainedgefin} above.
\end{proof}
\end{itemize}
\end{remark}

\subsection{Attraction to one edge on infinite connected graphs of bounded degree}
\label{InfGraphs}
In this section we will prove Theorem \ref{maininfedge}. 
In preparation for the proof of the theorem, we will first need to show Lemmas \ref{Lem:Stuck_new_edge} and \ref{Lem:Stuck_n_distance} below, which give us upper bounds on the probability that the walk ever visits more than $n$ vertices. Even though such results as in these two lemmas are known in the literature, we will provide here a proof for completeness purposes. We also note here that due to the permutations in Proposition \ref{edgegen}, which we are only able to remove in the special case of a star graph, we cannot apply the same arguments as in Theorem \ref{mainedgefin} directly to infinite graphs,  as the bounds would blow up with $n$. The lemmas below help us to bypass this problem.


Let us denote by $\norm{\cdot}$ the graph distance in $\GG$ when the graph is centred around $v_0$, where we recall that $v_0$ is the initial position of the walk. 
Let us define for each $n$ the stopping time 
\begin{equation*}
T_n:=\inf\{k \geq 1\colon \norm{I_k}=n\},
\end{equation*} where $T_n$ is taken to be infinite otherwise. We define for each $n \geq 1$
\[V_n:=\left\{v\in V(\GG)\colon \, \norm{v}=n, \text{ there exists } v'\in V(\GG) \text{ such that } \{v, v'\} \in \NN_v \text{ with } \norm{v'}=n+1\right\}.
\] 
Observe that since $\GG$ is an infinite connected graph, the set $V_n$ is non-empty for every $n \geq 1$. Consider the stopping time 
\[T_{V_n}:=\inf\{k \geq 1\colon I_k \in V_n\},
\] with the convention that $T_{V_n}=\infty$ if $(I_k)_{k\ge 1}$ never enters $V_n$. Observe that for each $n \geq 1$, $T_n \leq T_{V_n}< T_{n+1}$. Let us denote the event
\begin{multline*}
B_{V_n}:=\big\{\mbox{At time}~T_{V_n} \mbox{the random walk gets in contact with an edge it has not been in contact} \\ \mbox{with before and then it immediately gets stuck on this edge}\big\}.
\end{multline*}
Both above and in what follows, the walk is said to come for the first time in contact with an edge at time $k\ge 0$ if it visits for the first time at time $k$ either of the two vertices of the respective edge.
\begin{Lemma}\label{Lem:Stuck_new_edge}
Let $\GG$ be an infinite connected graph of bounded degree. If $w$ satisfies (\ref{Azero}) and (\ref{initialweight}), then there exists $p>0$, depending only on $w\left(\cdot\right)$, the maximal degree of the graph $D(\GG)$ and the initial configuration of weights $\ell_0^e,\, e \in E(\GG)$, such that for all $n\ge 1$ we have
\begin{equation}\label{Eq: Stuck_new_edge}
{\mathbb{P}}^{\GG} \left(B_{V_n}\big{\vert}T_{V_n} < \infty\right)\geq p.
\end{equation}
\end{Lemma}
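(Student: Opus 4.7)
The plan is as follows. Fix $n \ge 1$ and work on $\{T_{V_n} < \infty\}$; let $v := I_{T_{V_n}} \in V_n$. The crucial first observation is that $v$ is visited for the first time at $T_{V_n}$: any earlier visit to $v$ would have already been a visit to $V_n$, contradicting the minimality of $T_{V_n}$. Picking a neighbor $v'\sim v$ with $\norm{v'}=n+1$ (which exists by the definition of $V_n$), the inequality $T_{V_n}<T_{n+1}$ forces the vertex $v'$ to have never been visited. Consequently, every edge in $\NN_v \cup \NN_{v'}$ still carries its initial weight at time $T_{V_n}$. In particular the edge $e^\ast:=\{v,v'\}$ is brand new, and $B_{V_n}$ is realized on the sub-event that the walk next hops from $v$ to $v'$ and then never leaves $e^\ast$ again.

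I would lower-bound ${\mathbb{P}}^{\GG}(B_{V_n}\mid\Fcal_{T_{V_n}})$ by a product $q_1 q_2$, where $q_1$ is the probability of jumping to $v'$ at time $T_{V_n}+1$ and $q_2$ is the conditional probability of never leaving $e^\ast$ thereafter. Setting $w_{\max}:=\sup_e w(\ell_0^e)<\infty$ via (\ref{initialweight}), the transition rule (\ref{ruleerrw}) applied with the fresh weights from the previous paragraph gives immediately $q_1 \ge w(\ell_0^{e^\ast})/(D(\GG)\,w_{\max})$. For $q_2$, once the walk has completed $k$ traversals of $e^\ast$ and has not yet left, the competing edges at the current endpoint of $e^\ast$ still carry their initial weights and sum to at most $D(\GG)\, w_{\max}$, so the probability of leaving at the next step is at most $D(\GG)\,w_{\max}/w(\ell_0^{e^\ast}+k)$. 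A standard infinite-product estimate, using $\log(1-x) \ge -x/(1-x)$ together with (\ref{Azero}), then yields
\[
q_2 \;\ge\; \exp\!\left(-D(\GG)\,w_{\max}\sum_{k=1}^{\infty}\frac{1}{w(\ell_0^{e^\ast}+k)}\right) \;>\; 0,
\]
uniformly in $e^\ast$ thanks to the supremum in (\ref{Azero}).

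Combining, ${\mathbb{P}}^{\GG}(B_{V_n}\mid T_{V_n}<\infty)\ge q_1\,q_2 \ge p$ for a positive constant $p=p(w,D(\GG),(\ell_0^e)_e)$, as required. I expect the only delicate point to be the freshness argument in the first paragraph, namely carefully unwinding the definitions of $V_n$, $T_{V_n}$ and $T_{n+1}$ to certify that no edge of $\NN_v$ and no edge of $\NN_{v'}$ has been traversed prior to $T_{V_n}$, so that all the competing weights at both endpoints of $e^\ast$ are still at their initial values. Once that is in place, the bounds on $q_1$ and $q_2$ are routine Polya/Rubin-type computations directly controlled by (\ref{Azero}) and (\ref{initialweight}).
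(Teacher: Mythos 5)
Your proposal takes the same approach as the paper's proof: on $\{T_{V_n}<\infty\}$, identify the newly-visited vertex $v:=I_{T_{V_n}}\in V_n$, pick the unvisited neighbour $v'$ with $\norm{v'}=n+1$, observe that $\{v,v'\}$ is a fresh edge, and then lower-bound the probability of getting stuck on $\{v,v'\}$ by an infinite-product estimate controlled by (\ref{Azero}) and (\ref{initialweight}), exactly as in display (\ref{Eq: Stuck_new_edge_v}).

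One small imprecision to flag: your assertion that \emph{every} edge in $\NN_v\cup\NN_{v'}$ still carries its initial weight at time $T_{V_n}$ is slightly overstated. Since the walk arrives at $v$ for the first time at $T_{V_n}$, there is exactly one edge $e\in\NN_v$ (the one it just traversed) with $X^e_{T_{V_n}}=\ell_0^e+1$; all other edges incident to $v$, and all edges incident to $v'$, are indeed at their initial weights. The paper's own proof states this exception explicitly ("there exists exactly one $e\in\NN_v$ such that ... $X_k^e=\ell_0^e+1$") before writing the product lower bound, and the edge $e$ then sits in the competing sum $\sum_{f\in\NNs(v)\setminus\{v,v'\}}w(\cdot)$. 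This does not change the structure or conclusion of the argument, but it is the one place where your "freshness" claim needs the caveat; I'd rephrase accordingly. Apart from that, your $q_1q_2$ decomposition and the $\exp(-2\sum_i D(\GG)w_{\max}/w(i+\ell_0^{e^\ast}))$ estimate coincide with the paper's computation.
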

\begin{proof} Fix $n\ge 1$ arbitrarily. Before we proceed, we note that 
$${\mathbb{P}}^{\GG} \left(T_{V_n} < \infty \right)\ge {\mathbb{P}}^{\GG} \left(T_{n+1} < \infty \right)\ge {\mathbb{P}}^{\GG} \left(T_{n+1} =n+1 \right)>0.$$

On the event $\{T_{V_n}=k\}$, we have that $I_k=v$ for some $v \in V_n$. Firstly, observe that there exists exactly one $e \in \NN_v$ such that $X_t^{e}=\ell_0^e$, for $t \leq k-1$, and $X_k^{e}=\ell_0^e +1$, and for every $f\neq e \in \NN_v$, we must have $X_k^{f}=\ell_0^f$ (or else $v$ would have been visited before). Moreover, there exists a $v' \in  V(\GG)$, such that $\left\{v, v'\right\}\in \NN_v$ and $\norm{v'}=n+1$, and $v'$ has never been visited before. This implies that $\left\{v, v'\right\}$ is a new edge that the walk has never crossed up to time $k$, which the walk comes in contact with for the first time at time $k$. Due to the assumptions on the weight function $w(\cdot)$, there exists a $p$ uniformly bounded away from $0$, and depending only on $w\left(\cdot\right)$,  $D(\GG)$ and on the initial configuration of weights $\ell_0^e,\, e \in E(\GG)$ (in fact, only on weights $\ell_0^f$ and $\ell_0^{f'}$, where $f$ and $f'$ are the edges incident on $v$ and $v'$ respectively) such that the walk keeps traversing solely the edge $\{v,v'\}$ after time $k$.
Therefore, in symbols, on the event $\{T_{V_n}=k\}$ we have  
\begin{eqnarray}\label{Eq: Stuck_new_edge_v}
\lefteqn{{\mathbb{P}}^{\GG}\left(\{I_t, I_{t+1}\}=\{v, v'\}, \text{ for all } t \geq k\big{\vert} {\cal {F}}_k\right)}\nonumber\\
&\ge&\prod_{i=0}^\infty\frac{\left(w(i+\ell_0^{\{v,v'\}})\right)^2}{\big[w(i+\ell_0^{\{v,v'\}})+\sum_{f\in {\cal{N}}(v)\setminus{\{v,v'\}}}w(\ell_0^f)\big]\big[{w(i+\ell_0^{\{v,v'\}})+\sum_{f'\in {\cal{N}}(v')\setminus {\{v,v'\}}}w(\ell_0^{f'})}\big]}\nonumber\\
&\ge&\prod_{i=0}^\infty\frac{\left(w(i+\ell_0^{\{v,v'\}})\right)^2}{\big[w(i+\ell_0^{\{v,v'\}})+D(\GG) \sup_{e\in E(\GG)}w(\ell_0^e)\big]^2}=\prod_{i=0}^\infty\bigg(\frac{w(i+\ell_0^{\{v,v'\}})+D(\GG) \sup_{e\in E(\GG)}w(\ell_0^e)}{w(i+\ell_0^{\{v,v'\}})}\bigg)^{-2}\nonumber\\
&=&\prod_{i=0}^{\infty}\bigg(1+\frac{D(\GG) \sup_{e\in E(\GG)}w(\ell_0^e)}{w(i+\ell_0^{\{v,v'\}})}\bigg)^{-2}\ge\exp\bigg(-2\sum_{i=1}^\infty\frac{D(\GG) \sup_{e\in E(\GG)}w(\ell_0^e)}{w(i+\ell_0^{\{v,v'\}})}\bigg)\nonumber\\
 &=&:p>0.
\end{eqnarray}
In the above, we have used for the second inequality (\ref{initialweight}), and for the third inequality we used $1+x\le e^{x}, x\in\mathbb{R},$ and (\ref{Azero}). Equation (\ref{Eq: Stuck_new_edge_v}) implies that 
$${\mathbb{P}}^{\GG}\left(B_{V_n}|T_{V_n}=k\right)\geq p,$$
from which we get
\begin{eqnarray}\label{Eq: Stuck_new_edge_proof}
{\mathbb{P}}^{\GG} \left(B_{V_n}\big{\vert}T_{V_n}< \infty\right)&=&\frac{{\mathbb{P}}^{\GG} \left(B_{V_n}\cap\big (T_{V_n}< \infty)\right)}{{\mathbb{P}}^{\GG} \left(T_{V_n}< \infty\right)}=\sum_{k\ge n}\frac{{\mathbb{P}}^{\GG} \left(B_{V_n}\cap\big (T_{V_n}=k)\right)}{{\mathbb{P}}^{\GG} \left(T_{V_n}< \infty\right)}\nonumber\\
&=&\sum_{k\ge n}\frac{{\mathbb{P}}^{\GG} \left(B_{V_n}\big{\vert} T_{V_n}=k\right) {\mathbb{P}}^{\GG} \left(T_{V_n}=k\right)}{{\mathbb{P}}^{\GG} \left(T_{V_n}< \infty\right)}\geq \sum_{k\ge n}\frac{p\, {\mathbb{P}}^{\GG} \left(T_{V_n}=k\right)}{{\mathbb{P}}^{\GG} \left(T_{V_n}< \infty\right)}=p.
\end{eqnarray}
\end{proof}
\begin{Lemma}\label{Lem:Stuck_n_distance}
Let $\GG$ be an infinite connected graph of bounded degree. If $w$ satisfies (\ref{Azero}) and (\ref{initialweight}), then for every $n \geq 1$,
\begin{equation}\label{Eq:Stuck_k_distance}
{\mathbb{P}}^{\GG}\bigg(\displaystyle \sup_{k \geq 1} \norm{I_k} > n\bigg)\leq \left(1- p\right)^{[n/2]},
\end{equation} 
where $p$ is as in \eqref{Eq: Stuck_new_edge}.
\end{Lemma}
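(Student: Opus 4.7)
My plan is to chain Lemma \ref{Lem:Stuck_new_edge} along the odd-indexed levels $V_1,V_3,V_5,\ldots$, exploiting the fact that once the walk gets trapped on a new edge at its first visit to $V_{2k-1}$ (the event $B_{V_{2k-1}}$), both endpoints of that edge sit at distance at most $2k$, so the walk can no longer reach distance $2k+1$. Each such level thus contributes a factor of $(1-p)$ to the probability of progressing further out, which will ultimately account for the exponent $[n/2]$.

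Concretely, for $k\ge 1$ I would introduce
\[
S_k=\{T_{V_{2k-1}}<\infty\}\qquad\text{and}\qquad G_k=S_k\cap B_{V_{2k-1}}^c,
\]
and establish two facts. First, $S_{k+1}\subseteq G_k$: any trajectory reaching $V_{2k+1}$ must, at the moment it first touches distance $2k$, arrive from a vertex at distance $2k-1$ with a neighbor at distance $2k$, i.e., from a vertex of $V_{2k-1}$, so $S_{k+1}\subseteq S_k$; moreover, $B_{V_{2k-1}}$ would confine the walk to an edge whose farther endpoint lies at distance $2k$, making any later visit to $V_{2k+1}$ impossible. Second, ${\mathbb{P}}^{\GG}(G_k)\le(1-p){\mathbb{P}}^{\GG}(S_k)$: the proof of Lemma \ref{Lem:Stuck_new_edge} actually produces a bound uniform in the past, so by the strong Markov property at the stopping time $T_{V_{2k-1}}$,
\[
{\mathbb{P}}^{\GG}\!\left(B_{V_{2k-1}}\mid \Fcal_{T_{V_{2k-1}}}\right)\ge p \quad\text{on } \{T_{V_{2k-1}}<\infty\},
\]
and integrating the complement over $\{T_{V_{2k-1}}<\infty\}$ yields the claim.

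Combining the two facts gives ${\mathbb{P}}^{\GG}(S_{k+1})\le{\mathbb{P}}^{\GG}(G_k)\le(1-p){\mathbb{P}}^{\GG}(S_k)$, so by induction from ${\mathbb{P}}^{\GG}(S_1)\le 1$ one obtains ${\mathbb{P}}^{\GG}(S_k)\le(1-p)^{k-1}$ and ${\mathbb{P}}^{\GG}(G_k)\le(1-p)^k$. A parity case analysis then finishes the proof: if $n=2\ell$ the event $\{\sup_{k\ge 1}\norm{I_k}>n\}$ is contained in $G_\ell$ (reaching distance $2\ell+1$ forces both the visit to $V_{2\ell-1}$ and the avoidance of the trap there), while if $n=2\ell+1$ the same event is contained in $S_{\ell+1}$ (a visit to distance $2\ell+2$ forces $T_{V_{2\ell+1}}<\infty$); in either case the probability is bounded by $(1-p)^{[n/2]}$. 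The only real obstacle I expect is the bookkeeping check that the constant $p$ from Lemma \ref{Lem:Stuck_new_edge} is genuinely history-independent---it depends solely on $w$, $D(\GG)$ and the initial weights on the edges incident to the two vertices of the newly-discovered edge---which is exactly the form needed for the strong Markov step above.
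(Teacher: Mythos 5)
Your proof is correct and is essentially the paper's argument, merely repackaged: the paper runs an explicit induction on $n$ in steps of two, invoking Lemma~\ref{Lem:Stuck_new_edge} at $V_{2n+1}$ and $V_{2n+2}$, while you chain the same lemma along the odd-indexed sets $V_1, V_3, V_5,\ldots$ via the recursion ${\mathbb{P}}^{\GG}(S_{k+1})\le(1-p){\mathbb{P}}^{\GG}(S_k)$ and then handle parity at the end. Both rely on the same two ingredients (the uniform lower bound $p$ from Lemma~\ref{Lem:Stuck_new_edge}, which is indeed history-independent, and the ordering $T_n\le T_{V_n}<T_{n+1}$ which forces $B_{V_{2k-1}}$ to bar the walk from distance $2k+1$), so the routes are not meaningfully different.
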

\begin{proof} Recall from the proof of Lemma \ref{Lem:Stuck_new_edge} that ${\mathbb{P}}^{\GG} \left(T_{V_n} < \infty \right)>0, n\ge 1$. 

The proof follows by induction on $n \geq 1$. Trivially, we have ${\mathbb{P}}^{\GG}\bigg(\displaystyle \sup_{k \geq 1} \norm{I_k} > 1\bigg)\le 1$.
Observe now that $\{\textstyle \sup_{k \geq 1} \norm{I_k} > 2 \}$ implies that $T_{V_1}< \infty$, and also that $B^c_{V_1}$ must happen (otherwise the walk will get stuck immediately after time $T_{V_1}$ on a new edge it comes in contact with). 
Therefore, 
\begin{eqnarray} \label{Eq: Base_step_1}
\nonumber {\mathbb{P}}^{\GG}\bigg(\displaystyle \sup_{k \geq 1} \norm{I_k} > 2\bigg) &\leq &{\mathbb{P}}^{\GG}\left(B^c_{V_1}\cap (T_{V_1}< \infty)\right) =  {\mathbb{P}}^{\GG}\left(B^c_{V_1}\Big{\vert}T_{V_1}< \infty\right) {\mathbb{P}}^{\GG}\left(T_{V_1}< \infty\right) \\
&\le & (1-p),
\end{eqnarray}
where for the second inequality we used Lemma \ref{Lem:Stuck_new_edge}. By arguments similar to the ones above, we have 
\begin{eqnarray}
\label{Eq: Base_step_2}
{\mathbb{P}}^{\GG}\left(\displaystyle \sup_{k \geq 1} \norm{I_k} > 3\right) &\leq &  {\mathbb{P}}^{\GG}\left(B^c_{V_2}\cap (T_{V_2}< \infty)\right)
\le (1-p).
\end{eqnarray}
Thus, \eqref{Eq: Base_step_1} and \eqref{Eq: Base_step_2} together prove the base step of the induction. Let us assume now that \eqref{Eq:Stuck_k_distance} holds for all $m \leq 2n+1$. To complete the proof, we need to show
 that \eqref{Eq:Stuck_k_distance} holds for $m=2n+2$ and for $m=2n+3$. 

Observe now that $\{\textstyle \sup_{k \geq 1} \norm{I_k} > 2n+2\} =\{\sup_{k \geq 1} \| I_k\| \geq 2n+3\}$, which in turn implies that $T_{2n+2}< \infty$. This gives that $T_{V_{2n+1}}<T_{2n+2}< \infty$, and that also the event $B^c_{V_{2n+1}}$ must happen (otherwise the walk will get stuck immediately after time $T_{V_{2n+1}}$ on a new edge it comes in contact with). 
Therefore
\begin{eqnarray*}
{\mathbb{P}}^{\GG}\bigg(\displaystyle \sup_{k \geq 1} \norm{I_k} > 2n+2\bigg)\!\!\!&\leq &\!\!\!{\mathbb{P}}^{\GG}\left(B^c_{V_{2n+1}}\cap (T_{V_{2n+1}}< \infty)\right) 
\le {\mathbb{P}}^{\GG}\left(B^c_{V_{2n+1}}\Big{\vert}T_{V_{2n+1}}< \infty\right) \!{\mathbb{P}}^{\GG}\left(T_{{2n+1}}< \infty\right)\nonumber\\
 &\le& \left(1- p\right) {\mathbb{P}}^{\GG}\bigg(\displaystyle \sup_{k \geq 1} \norm{I_k} >2n\bigg)\le (1-p)^{n+1},
\end{eqnarray*}
where for the second inequality we used $\{T_{V_{2n+1}}< \infty\}\subseteq \{T_{{2n+1}}< \infty\}$, for the third inequality we used Lemma \ref{Lem:Stuck_new_edge} and the fact that $\big\{T_{{2n+1}}< \infty\big\}\subseteq \bigg\{\displaystyle \sup_{k \geq 1} \norm{I_k} >2n\bigg\}$, and for the fourth inequality we used our induction hypothesis.

For $m=2n+3$, by conditioning on the event $\{T_{V_{2n+2}}< \infty\}$, we can use the same arguments with $B^c_{V_{2n+1}}$ replaced by $B^c_{V_{2n+2}}$, to obtain 
\begin{equation*}
{\mathbb{P}}^{\GG}\bigg(\displaystyle \sup_{k \geq 1} \norm{I_k} > 2n+3\bigg) \leq (1-p)^{n+1}.
\end{equation*}
\end{proof}
\begin{remark} The statement of Lemma \ref{Lem:Stuck_n_distance} is a precise restatement of Lemma 25 of Cotar and Limic \cite{CotarLimic}. However, in Lemma 25 from \cite{CotarLimic} the authors use in the proof the assumption that the edge-reinforced random walk gets stuck on one edge, whereas we have considered no such assumption in our proof above. Moreover,  our proof adapts easily to the the more general types of reinforced processes studied in Remark \ref{difw} (a) and (b), provided that the equivalent inequality to (\ref{Eq: Stuck_new_edge_v}) holds.
\end{remark}

\noindent We will prove next the main result of this section. 

\noindent\textbf{Proof of Theorem \ref{maininfedge}}

\textbf{Step 1:} For all $n\ge 1$, let $\GG_n$ be the graph centred at $v_0$ formed of the vertices in $\GG$ at graph distance $\le n$, and let $\partial\GG_n$ be the boundary of $\GG$, that is, the graph formed of vertices at graph distance $n+1$ which are connected by an edge to vertices in $\GG_n$. Since $\GG$ is an infinite graph, the set $\partial\GG_n$ is non-empty for every $n\ge 1$. We denote by
$$A_n= \{ |\GG_\infty|=1, ~~\mbox{the walk never leaves}~\GG_n\}.$$
We have by definition of $A_n$ that
$$A_n \subseteq A_{n+1}, \forall n\ge 1,~\mbox{and}~ \{|\GG_\infty|=1\}\supseteq \cup_{n\ge 1}A_n.$$
Therefore we have
$${\mathbb{P}}^{\GG}(|\GG_\infty|=1)\ge\lim_{n\rightarrow\infty} {\mathbb{P}}^{\GG}( A_n).$$
From Lemma \ref{Lem:Stuck_n_distance} we have $\mathbb{P}^{\GG}(\mbox{the walk never leaves}~ \GG_n)\ge 1-\theta^{[n/2]}>0$, for some fixed $0<\theta<1$ which depends only on $w, \cal{D}(\GG)$ and the initial configurations of weights $\ell_0^e,e\in E(\GG)$. Then
\begin{eqnarray}
\label{bd1}
{\mathbb{P}}^{\GG}( A_n)&=&{\mathbb{P}}^{\GG}(\mbox{the walk never leaves}~\GG_n) {\mathbb{P}}^{\GG}(|\GG_\infty|=1~ |~\mbox{the walk never leaves}~ \GG_n)\nonumber\\
&\ge& (1-\theta^{[n/2]}) {\mathbb{P}}^{\GG}(|\GG_\infty|=1~ | ~\mbox{the walk never leaves}~\GG_n).
\end{eqnarray}

\textbf{Step 2:} We will show here that for each $n\ge 1$, we have
\begin{equation}
\label{bd2}
{\mathbb{P}}^{\GG}(|\GG_\infty|=1~ |~\mbox{the walk never leaves}~\GG_n)=1.
\end{equation}

\vspace{2mm}

\noindent Coupled with (\ref{bd1}), (\ref{bd2}) will imply that ${\mathbb{P}}^{\GG}( A_n)\ge 1-\theta^{[n/2]}$; since ${\mathbb{P}}^{\GG}(|\GG_\infty|=1)\ge\lim_{n\rightarrow\infty} {\mathbb{P}}^{\GG}( A_n)$, the statement of the theorem will follow immediately.

\noindent In order to prove (\ref{bd2}), we introduce the probability measure
$${\mathbb{P}}_{|\GG_n}(\cdot):={\mathbb{P}}^{\GG}(\cdot ~|~\mbox{the walk never leaves}~\GG_n).$$
Then (\ref{bd2}) is equivalent to ${\mathbb{P}}_{|\GG_n}(|\GG_\infty|=1)=1$, which last one we will proceed to show as in the proof of Theorem \ref{mainedgefin}. We will start by finding estimates for the order statistics for the number of edge traversals. Since on ${\mathbb{P}}_{|\GG_n}$ only edges in $\GG_n$ are traversed, we need only consider the corresponding order statistics for edges in $E(\GG_n)$,  where as before, $E(\GG_n)$ is the set of edges in $\GG_n$ and $V(\GG_n)$ is the set of vertices in $\GG_n$.  This reduces to finding, for $v\in V(\GG_n)$ and $k\geq 1$,  upper bounds for ${\mathbb{P}}_{|\GG_n}(X_k^e-\ell_0^e=\ell_k^e, e \in E(\GG_n), A_{v,k})$, where $\ell_k^e\in\mathbb{N}$, $e\in E(\GG_n)$, with $\sum_{e \in E({\scriptsize \GG_n})}\ell_k^e=k$, whereas $X_k^e=\ell_0^e$ for all $e\in E(\GG)\setminus E(\GG_n)$. More precisely, we will show in Step 3 below that for all $v\in V(\GG_n)$
\begin{eqnarray}
\label{condgn1a}
\lefteqn{{\mathbb{P}}_{|\GG_n}(X_k^e-\ell_0^e=\ell_k^e, e \in E(\GG_n), A_{v,k})}\nonumber\\
&=&\frac{{\mathbb{P}}^{\GG}(X_k^e-\ell_0^e=\ell_k^e, e \in E(\GG_n), A_{v,k}, ~\mbox{the walk never leaves}~ \GG_n)}{{\mathbb{P}}^{\GG}(~\mbox{the walk never leaves}~ \GG_n)}\nonumber\\ 
&\leq& 
\frac{1}{{\mathbb{P}}^{\GG}(~\mbox{the walk never leaves}~ \GG_n)}\,\,\,\cdot\frac{\prod_{e\in E({\scriptsize \GG_n})}w(\ell_0^e)}{\min_{e \in E(\GG_n)}\, w(\ell_0^e) }
\,\,\cdot 
\frac{\sum_{e\in \NNs_v,  e\in\GG_n}w(\ell_k^e+\ell_0^e)}
{\prod_{e\in E({\scriptsize \GG_n})} w(\ell_k^e+\ell_0^e)},
\end{eqnarray}
where $\mathbb{P}^{\GG}(\mbox{the walk never leaves}~ \GG_n)>0$.  Given (\ref{condgn1a}), the corresponding inequality for ${\mathbb{P}}_{|\GG_n}(R_k^i=\ell_k^i, i=1,\ldots, \nn, A_{v,k})$, $\nn=|E(\GG_n)|$, follows immediately. This allows us to get upper bounds for ${\mathbb{P}}_{|\GG_n}(R_k^{2}=\ell_k^{2})$ as in Proposition \ref{2orderstat} above. Then, by the same arguments as in Theorem \ref{mainedgefin}, we can show that ${\mathbb{E}}_{|\GG_n}(g(R_\infty^2))<\infty$, from which ${\mathbb{P}}_{|\GG_n}$ a.s. we have $R_\infty^2<\infty$. Since on ${\mathbb{P}}_{|\GG_n}$ the walk only visits finitely many vertices, this gives 
${\mathbb{P}}_{|\GG_n}(|\GG_\infty|=1)=1$.


\vspace{1.5mm}

\textbf{Step 3:} It remains to show here (\ref{condgn1a}) for all $v\in V(\GG_n)$. Even though we borrow the \textit{key} induction idea from the proof of Proposition 1.19 in \cite{CotarLimic} to show this, the argument is more delicate now due to the fact that we restrict our walk to stay in $\GG_n$.  This means in particular that one needs to take a lot more care when setting up a suitable recursion formula (on $k\ge 1$) to use for the induction argument, which recursion is the main ingredient in the proof by induction used in Proposition 1.19 from \cite{CotarLimic}. Our first main goal below will thus be to show the recursion formula from (\ref{condexp}) below.

To begin with, we note that for all $k\ge 1$, we have
\begin{multline}
\label{condk0}
{\mathbb{P}}^{\GG}(X_k^e-\ell^e_0=\ell_k^e,\, e\in E(\GG_n), 
A_{v,k}, ~\mbox{the walk never leaves}~ \GG_n)\\
\le {\mathbb{P}}^{\GG}(X_k^e-\ell^e_0=\ell_k^e,\, e\in E(\GG), 
A_{v,k}, ~\mbox{the walk does not leave}~ \GG_n~\mbox{by time}~k).
\end{multline}
 Next,  we will show that for all $v\in V(\GG_n)$ and for all $k\ge 1$, we have
\begin{multline}
\label{constrgn}
{\mathbb{P}}^{\GG}(X_k^e-\ell^e_0=\ell_k^e,\, e\in E(\GG_n), A_{v,k}, ~\mbox{the walk does not leave}~ \GG_n~\mbox{by time}~k)\\
\le\frac{\prod_{e\in E({\scriptsize \GG_n})}w(\ell_0^e)}{\min_{e \in E(\GG_n)}\, w(\ell_0^e) }\,\,\cdot\frac{\sum_{e\in \NNs_v,  e\in\GG_n}w(\ell_k^e+\ell_0^e)}
{\prod_{e\in E({\scriptsize \GG_n})} w(\ell_k^e+\ell_0^e)},
\end{multline}
which, combined with (\ref{condk0}), will give (\ref{condgn1a}). In order for the event 
$\{X_k^e-\ell^e_0=\ell_k^e, e \in E(\GG_n), A_{v,k}\}$
to happen, it must be 
$I_{k-1}=v_i$ for some
$v_i\sim v$ 
such that $\ell_k^{\{v,v_i\}}\ge 1$, and 
furthermore it must be $\{I_{k-1},I_k\}=\{v_i,v\}$. Before we proceed, let below for simplicity of exposition for $v\in V(\GG_n)$
$$B_{v,k}\big((\ell_k^e)_{e\in E(\GG_n)}\big):=\big\{X_k^e-\ell^e_0=\ell_k^e, e \in E(\GG_n), A_{v,k}\big\},~~\Theta_{n,k}:=\{\mbox{the walk does not leave}~ \GG_n~\mbox{by time}~k\}.$$
Then we have on the left-hand side of (\ref{constrgn}) for $v\in V(\GG_n)$ and $k\ge 1$ 
\begin{eqnarray}
\label{condk}
\lefteqn{{\mathbb{P}}^{\GG} \big(B_{v,k}\big((\ell_k^e)_{e\in E(\GG_n)}\big)\cap\Theta_{n,k}\big)}\nonumber\\
&=&\sum_{i=1:\,\ell_k^{\{v,v_i\}}\ge 1}^{d_v} {\mathbb{P}}^{\GG}\big(B_{v_i,k-1}\big(\ell_k^{\{v_i,v\}}-1,(\ell_k^e)_{e\in E(\GG_n)\setminus \{v_i,v\}}\big) \cap \{\{I_{k-1},I_k\}=\{v_i,v\}\}\cap\Theta_{n,k}\big)\nonumber\\
&=&\sum_{i=1: \,\ell_k^{\{v,v_i\}}\ge 1, v_i\in V(\GG_n)}^{d_v} {\mathbb{P}}^{\GG}\big(B_{v_i,k-1}\big(\ell_k^{\{v_i,v\}}-1,(\ell_k^e)_{e\in E(\GG_n)\setminus \{v_i,v\}}\big)\cap \{\{I_{k-1},I_k\}=\{v_i,v\}\} \cap\Theta_{n,k}\big)\nonumber\\
&\le&\sum_{i=1: \,\ell_k^{\{v,v_i\}}\ge 1, v_i\in V(\GG_n)}^{d_v} {\mathbb{P}}^{\GG}\big(B_{v_i,k-1}\big(\ell_k^{\{v_i,v\}}-1,(\ell_k^e)_{e\in E(\GG_n)\setminus \{v_i,v\}}\big)\cap \{\{I_{k-1},I_k\}=\{v_i,v\}\} \cap\Theta_{n,k-1}\big)\nonumber\\
\end{eqnarray}
where for the second equality in (\ref{condk}) we restricted the summation only to $v_i\in V(\GG_n)$, and for the inequality we replaced the event $\Theta_{n,k}$ by the bigger event $\Theta_{n,k-1}$ in order to obtain a recursion in $k$ on the quantity on the left-hand side of (\ref{constrgn}).
Expanding (\ref{condk}) further, we have for $v\in V(\GG_n)$
\begin{eqnarray}
\label{condexp}
\lefteqn{{\mathbb{P}}^{\GG}\big(B_{v,k}\big((\ell_k^e)_{e\in E(\GG_n)}\big)\cap\Theta_{n,k}\big)}\nonumber\\
&\le&\sum_{i=1: \,v_i\in V(\GG_n),\ell_k^{\{v,v_i\}}\ge 1}^{d_v}\!\!\!\!\!{\mathbb{P}}^{\GG}\big(B_{v_i,k-1}\big(\ell_k^{\{v_i,v\}}-1,(\ell_k^e)_{e\in E(\GG_n)\setminus \{v_i,v\}}\big)\cap ~\Theta_{n,k-1}\big)\times\nonumber\\
&&\,\,\,\,\,\,\,\,\,\,\,\,\,\,\,\,\,\,\,\,\,\,\,\,\,\,\,\,\,\,\,\,\,\,\,\,\,\,\,\,\,\,\,\,\,{\mathbb{P}}^{\GG}\!\big(\{I_{k-1},I_k\}\!=\!\{v_i,v\}\big{\vert} B_{v_i,k-1}\big(\ell_k^{\{v_i,v\}}-1,(\ell_k^e)_{e\in E(\GG_n)\setminus \{v_i,v\}}\big)\cap ~\Theta_{n,k-1}\big)\nonumber\\
&=&\sum_{i=1:\, v_i\in V(\GG_n)\atop \,\,\,\ell_k^{\{v,v_i\}}\ge 1}^{d_v}\!\!\!\!\!\! {{\mathbb{P}}^{\GG}}\big(B_{v_i,k-1}\big(\ell_k^{\{v_i,v\}}-1,(\ell_k^e)_{e\in E(\GG_n)\setminus \{v_i,v\}}\big)\cap ~\Theta_{n,k-1}\big)\times\nonumber\\
&&\,\,\,\,\,\,\,\,\,\,\,\,\,\,\,\,\,\,\,\,\,\,\,\,\,\,\,\,\,\,\,\,\,\,\,\,\,\,\,\,\,\,\,\,\,\frac{w(\ell_k^{\{v,v_i\}}-1+\ell_0^e)}{w(\ell_k^{\{v,v_i\}}-1+\ell_0^e) +\sum_{e\in\GG_n\cap \NNs_{v_i}\atop e\neq \{v,v_i\}}w(\ell_k^e+\ell_0^e)+\sum_{e\in\partial\GG_n\cap \NNs_{v_i}}w(\ell_0^e)},
\end{eqnarray}
where we recall that $\NN_{v}:=\{e_1^{v}, e_2^{v},\ldots e_{d_{v}}^{v}\}$ is the set of  edges incident to $v$. For the inequality in (\ref{condexp}) we used conditioning in the last equation in (\ref{condk}), and for the equality we used  that by (\ref{ruleerrw})
\begin{multline*}
{\mathbb{P}}^{\GG}\!\big(\{I_{k-1},I_k\}\!=\!\{v_i,v\}\big{\vert}  B_{v_i,k-1}\big(\ell_k^{\{v_i,v\}}-1,(\ell_k^e)_{e\in E(\GG_n)\setminus \{v_i,v\}}\big)\cap ~\Theta_{n,k-1}\big)\\
=\frac{w(\ell_k^{\{v,v_i\}}-1+\ell_0^e)}{w(\ell_k^{\{v,v_i\}}-1+\ell_0^e) +\sum_{e\in\GG_n\cap \NNs_{v_i}\atop e\neq \{v,v_i\}}w(\ell_k^e+\ell_0^e)+\sum_{e\in\partial\GG_n\cap \NNs_{v_i}}w(\ell_0^e)}.
\end{multline*}
The use of (\ref{ruleerrw}) in (\ref{condexp}) above is in fact the main reason that we developed our recursion formula on the quantity on the left-hand side of (\ref{constrgn}) rather than on the quantity on the left-hand side of (\ref{condk0}).

\vspace{1.5mm}

Given (\ref{condexp}), we can now prove (\ref{constrgn}) by using an argument of induction on $k\ge 1$ similar to the one in Proposition 19 from \cite{CotarLimic} (see also Proposition \ref{Pfingraph1v} below for another example of a related induction argument in the case of VRRW). Formally, we argue as follows. 

The base of induction at $k=0$, that is
\begin{multline}
\label{baseind}
{\mathbb{P}}^{\GG}(X_0^e=\ell^e_0,\, e\in E(\GG_n), A_{v,0}, ~\mbox{the walk does not leave}~ \GG_n~\mbox{by time}~0)\le\frac{\sum_{e\in \NNs_v,  e\in\GG_n}w(\ell_0^e)}{\min_{e \in E(\GG_n)}\, w(\ell_0^e) },
\end{multline}
clearly holds, since when the left-hand-side is $0$
the right-hand-side is positive, and when
the left-hand-side is $1$ (i.e., when $v=v_0$) the right-hand-side is greater or equal than $1$.

Let us assume now that (\ref{constrgn}) holds for 
all $i\le k-1$ and $\ell_i^e\in\mathbb{N}$, $e\in E(\GG_n)$, with $\sum_{e \in E({\scriptsize \GG_n})}\ell_i^e=i$. 
For the induction step 
we need to show that the bound holds for $i=k$ and $\ell_k^e\in\mathbb{N}$, $e\in E(\GG_n)$, with $\sum_{e \in E({\scriptsize \GG_n})}\ell_k^e=k$. By means of (\ref{condexp}), (\ref{baseind}) and of the induction hypothesis, we get now
\begin{eqnarray*}
\lefteqn{{\mathbb{P}}^{\GG}(X_k^e-\ell^e_0=\ell_k^e,\, e\in E(\GG), 
A_{v,k}, ~\mbox{the walk does not leave}~ \GG_n~\mbox{by time k})}\\
&\le& \frac{\prod_{e\in E({\scriptsize \GG_n})}w(\ell_0^e)}{\min_{e \in E(\GG_n)}w(\ell_0^e)}{\sum_{i=1, v_i\in V(\GG_n)\atop \ell_k^{\{v,v_i\}}\ge 1}^{d_v}}\,\,\frac{w(\ell_k^{\{v,v_i\}}-1+\ell_0^e) +\sum_{e\in\GG_n\cap \NNs_{v_i}\atop e\neq \{v,v_i\}}w(\ell_k^e+\ell_0^e)}
{w(\ell_k^{\{v,v_i\}}-1+\ell_0^e)\prod_{e\in E({\scriptsize \GG_n})\setminus \{v,v_i\}} w(\ell_k^e+\ell_0^e)}\\
&&\times\frac{w(\ell_k^{\{v,v_i\}}-1+\ell_0^e)}{w(\ell_k^{\{v,v_i\}}-1+\ell_0^e) +\sum_{e\in\GG_n\cap \NNs_{v_i}\atop e\neq \{v,v_i\}}w(\ell_k^e+\ell_0^e)+\sum_{e\in\partial\GG_n\cap \NNs_{v_i}}w(\ell_0^e)}\\
&\le& \frac{\prod_{e\in E({\scriptsize \GG_n})}w(\ell_0^e)}{\min_{e \in E(\GG_n)}\, w(\ell_0^e) }\,\,\cdot\frac{\sum_{e\in \NNs_v,  e\in\GG_n}w(\ell_k^e+\ell_0^e)}
{\prod_{e\in E({\scriptsize \GG_n})} w(\ell_k^e+\ell_0^e)},
\end{eqnarray*}
which shows (\ref{constrgn}). In view of (\ref{constrgn}),  (\ref{condgn1a}) follows now immediately for all $v\in V(\GG_n)$.

\qed

\begin{remark}
\label{eachedgeinf}
\begin{itemize}
\item [(a)] Let $\GG$ be an infinite graph of bounded degree and assume that each edge $e\in E(\GG)$ has its own reinforcement function $w_e$. Provided that $w_e, e\in E(\GG),$ satisfy (\ref{Azero}) and (\ref{initialweight}), and that they also satisfy, similarly to (\ref{Eq: Stuck_new_edge_v}),  the uniform bound
\begin{equation}
\label{notstuck}
\inf_{\{v,v'\}}\prod_{i=1}^\infty\frac{\left(w_{\{v,v'\}}(i+\ell_0^{\{v,v'\}})\right)^2}{\big(w_{\{v,v'\}}(i+\ell_0^{\{v,v'\}})+\sum_{e\in {\cal{N}}(v)\atop e\neq {\{v,v'\}}}w_e(\ell_0^e)\big)\big({w_{\{v,v'\}}(i+\ell_0^{\{v,v'\}})+\sum_{e\in {\cal{N}}(v')\atop e\neq {\{v,v'\}}}w_{e}(\ell_0^e)}\big)}>0,
\end{equation}
we can easily show by Remark \ref{difw} and by similar arguments as in Theorem \ref{maininfedge} above, that 
\begin{equation}
\label{alwaysstuck}
{\mathbb{P}}^{\GG}(\GG_\infty~\mbox{has only one edge}) = 1.
\end{equation}
Sufficient conditions for (\ref{notstuck}) to hold are
\begin{equation}
\label{difedgweight}
\sup_{e\in E(\GG)}\sum_{i=1}^\infty\frac{1}{w_{e}(i+\ell_0^e)}<\infty~~~\mbox{and}~~~\sup_{e\in E(\GG)}w_e(\ell_0^e)<\infty.
\end{equation}
This result should be contrasted with Theorem 1.5 of Collevecchio, Cotar and LiCalzi \cite{CCL}(see also section 4.1 therein), where under the assumption $\sum_{i=1}^\infty\frac{1}{w_{e}(i+\ell_0^e)}<\infty,e\in E(\GG)$, a new phase transition was shown to exist for the related model of a generalized Polya urn model with infinitely many urns. More precisely, it was  proved therein that for the particular case where each urn $i, i\in\mathbb{N},$ follows the reinforcement function $w_i(n)=e^{i^3+n}$, every urn is visited only finitely many times. However, these particular reinforcement functions satisfy neither $\sup_{e\in E(\GG)}w_e(\ell_0^e)<\infty$ (not even when $\ell_0^e\equiv\ell_0,e\in E(\GG)$) nor (\ref{notstuck}). 
\item [(b)] Similarly to the reasoning in (a) above, one can show  for the reinforced walk from Remark \ref{difw} (b) that ${\mathbb{P}}^{\GG}(\GG_\infty~\mbox{has only one edge}) = 1$ on an infinite graph of bounded degree, under assumptions (\ref{Azero}), (\ref{initialweight}), (\ref{randweight}) (plus the equivalent to (\ref{notstuck})) on the weight functions $w_i$. 
\end{itemize}
\end{remark}


\section{Strongly vertex-reinforced random walks on general graphs}

In this section, we prove Theorem \ref{maininfvert}. As for the edge-reinforced walk, we will first work out the strategy for finite graphs in Subsection \ref{SFGraphvert} and then we will use the finite graph computations to extend our arguments in Subsection \ref{InfGraphvert} to the case of infinite connected graphs with bounded degree.

\subsection{Analysis on finite graphs}
\label{SFGraphvert}

Let $\GG$ be a finite graph, and abbreviate $\nn=|V(\GG)|$.
Moreover, denote the vertices of $\GG$ by $V(\GG)=\{v_1,v_2,\ldots v_\nn\}.$
If $v$ is an arbitrary vertex of the graph, we define as before $d_v:=\mbox{degree(v)}$. However, this time
$\NN_v:=\{v_1^v, v_2^v,\ldots v_{d_v}^v\}$ is the set of  vertices adjacent to $v$, as opposed to the set of edges incident to $v$ as was the case in Section 2. 
Recall that  $X_k^v$ equals the initial vertex weight
$\ell_0^v$ incremented by the number of times vertex $v$ has been visited by time $k$. 


\subsubsection{Bounds for the probabilities of the vertex weights order statistics}

Recall that $t_0:=0$.
Fix the initial position $I_{t_0}$ at some arbitrary vertex $v_0$. 
We re-label $X_k^v-\ell_0^v$ (the number of vertex visits at time $k \ge 0$) in increasing order. More precisely, we define the \textit{order statistics at time $k$} as a vector $R_k = (R_k^1,...,R_k^{\nn})$.  The components of this vector are the values of 
$$v \mapsto  X_k^v - \ell_0^v$$
put in non-increasing order; this defines the vector $R_k$ uniquely. Therefore, for all ${k}\ge 0$ we have 
\begin{equation}
\label{vertexorder}
0\le R_k^\nn\le R_k^{\nn-1}\le\ldots\le R_k^{1},~~~R_k^i\le R_{k+1}^i~\mbox{for all}~i=1,2,\ldots,\nn,~~\mbox{and}~~\sum_{j=1}^{\nn} R_k^j=k.
\end{equation}
Let $v$ be any vertex in $V(\GG)$. Then the total number of arrivals to $v$ by time $k$ differs by at most 1 from the total number of departures from $v$ by time $k$.  The initial vertex $v_0$ and the vertex at time $k$, if different from $v_0$, are the only vertices where the number of arrivals is not equal to the number of departures.
Coupled with (\ref{vertexorder}), this gives in particular that $2R_k^1\le k+1$, which in turn implies $(\nn-1) R_k^2\ge \sum_{i=2}^{\nn} R_k^i=k-R_k^1\ge( k-1)/2$. Thus, for all $\nn\ge 3$ we have
\begin{equation}
\label{impineq}
[{k}/\nn]\le R_k^1\le [{(k+1)}/2]~~\mbox{and}~~ [(k-1)/(2(\nn-1)]\le R_k^2\le [{k}/2].
\end{equation}
Then the following proposition gives \textit{path-independent} upper bounds on the distribution of the number of vertex traversals at time $k$.
\begin{proposition}
\label{Pfingraph1v}
Let $k\geq 1$ and
$v'\in V(\GG)$ and denote by $A_{v',k}$ the event $\{I_k=v'\}$.
Then for any $\ell_k^v \in\mathbb{N}$, $v\in V(\GG)$, and
$\sum_{v \in V({\scriptsize \GG})}\ell_k^v={k}$, we have 
\begin{equation}
\label{EPfingraph1v}
{\mathbb{P}}^{\GG}(X_k^v-\ell_0^v=\ell_k^v, v \in V(\GG), A_{v',k})
\leq 
\frac{\prod_{v\in V({\scriptsize \GG}), v\neq {v_0}} w(\ell_0^v)}
{\min_{v \in {\NN}_{v_0}}\, w(\ell_0^v) }
\cdot 
\frac{\sum_{v\in \NNs_{v'}}w(\ell_k^v+\ell_0^v)}
{\prod_{v\in V({\scriptsize \GG}), v\neq v'} w(\ell_k^v+\ell_0^v)}.
\end{equation}
\end{proposition}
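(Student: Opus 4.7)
The plan is to prove \eqref{EPfingraph1v} by induction on $k \ge 0$, in direct analogy with the proof of Proposition 19 in \cite{CotarLimic} (stated above as Proposition \ref{Pfingraph1}), but adapted from edge- to vertex-counts. At $k=0$ the bound is immediate: if $v' \ne v_0$ the left-hand side vanishes, and if $v' = v_0$ the only admissible configuration is $\ell_0^v \equiv 0$, in which case the right-hand side equals $\sum_{v \in \NN_{v_0}} w(\ell_0^v)/\min_{v \in \NN_{v_0}} w(\ell_0^v) \ge 1$.

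For the inductive step, I would decompose by summing over the position $u = I_{k-1}$, which must be adjacent to $v'$, and write $m_v := \ell_k^v - 1_{\{v=v'\}}$. On the event $\{X_{k-1}^v - \ell_0^v = m_v,\, v \in V(\GG)\}$ we have $X_{k-1}^y = \ell_0^y + m_y$, so the VRRW transition rule yields
\begin{equation*}
\mathbb{P}^{\GG}\!\bigl(X_k^v - \ell_0^v = \ell_k^v,\, v \in V(\GG),\, A_{v',k}\bigr) = \sum_{u \sim v'} \mathbb{P}^{\GG}\!\bigl(X_{k-1}^v - \ell_0^v = m_v,\, v \in V(\GG),\, A_{u,k-1}\bigr) \cdot \frac{w(\ell_0^{v'} + \ell_k^{v'} - 1)}{\sum_{y \sim u} w(\ell_0^y + m_y)},
\end{equation*}
where only $u$ with $\ell_k^{v'} \ge 1$ contribute. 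Applying the inductive hypothesis to each term bounds it by the prefactor $\prod_{v \ne v_0} w(\ell_0^v)/\min_{v \in \NN_{v_0}} w(\ell_0^v)$ times $\sum_{y \sim u} w(m_y + \ell_0^y)/\prod_{v \ne u} w(m_v + \ell_0^v)$.

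The key observation is then a clean telescoping: the sum $\sum_{y \sim u} w(m_y + \ell_0^y)$ in the numerator of the inductive bound cancels exactly against the denominator of the one-step transition probability. Using that $w(m_{v'} + \ell_0^{v'}) = w(\ell_k^{v'} + \ell_0^{v'} - 1)$ appears in the product $\prod_{v \ne u} w(m_v + \ell_0^v)$, this factor also cancels, leaving
\[
\sum_{u \sim v'} \frac{1}{\prod_{v \ne u,\, v \ne v'} w(\ell_k^v + \ell_0^v)} = \frac{\sum_{u \in \NN_{v'}} w(\ell_k^u + \ell_0^u)}{\prod_{v \ne v'} w(\ell_k^v + \ell_0^v)},
\]
which is exactly the factor required on the right-hand side of \eqref{EPfingraph1v}.

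The main obstacle is bookkeeping the asymmetric roles of $v_0$ and $v'$. The vertex $v_0$ is privileged in the prefactor $\prod_{v \ne v_0} w(\ell_0^v)/\min_{v \in \NN_{v_0}} w(\ell_0^v)$ because the walk starts there without incrementing $X_0^{v_0}$, while $v' = I_k$ is privileged in $\prod_{v \ne v'}$ because its visit count has just bumped up by one. Managing the indicator $1_{\{v=v'\}}$ inside $m_v$ throughout the recursion, and verifying that the structural cancellation still produces the correct product, is where care is needed; no new analytic ingredient beyond the induction and the Markov step is required, since the cancellation mechanism is inherited verbatim from the ERRW argument of \cite{CotarLimic}.
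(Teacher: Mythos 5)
Your proof is correct and takes essentially the same route as the paper: induction on $k$, the base case at $k=0$, the one-step decomposition over $u=I_{k-1}\sim v'$, and the telescoping cancellation between the numerator $\sum_{y\sim u}w(m_y+\ell_0^y)$ of the inductive bound and the denominator of the transition probability. The paper actually stops after writing the recursion and cites Proposition~19 of Cotar--Limic for the rest; you have filled in the cancellation explicitly, which is exactly what that citation encapsulates. One small slip: in the base case you write ``the only admissible configuration is $\ell_0^v\equiv 0$'' --- you mean $\ell_k^v\equiv 0$ for all $v$, forced by $\sum_v\ell_k^v=k=0$ (at $k=0$ we have $X_0^v-\ell_0^v=0$), since $\ell_0^v$ already denotes the initial weight and need not vanish.
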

\begin{proof}

As in Proposition 19 in \cite{CotarLimic} and as in Step 3 from the proof of Theorem \ref{maininfedge}, we will use induction on 
$k\ge 1$ to prove the above inequality. 
We note to begin with that the corresponding inequality to (\ref{EPfingraph1v}) for $k=0$ 
\begin{equation}
\label{level0vrrw}
{\mathbb{P}}^{\GG}(X_0^v=\ell_0^v, v \in V(\GG), A_{v',0})
\leq 
\frac{\sum_{v\in \NNs_{v'}}w(\ell_0^v)}
{\min_{v \in {\NN}_{v_0}}\, w(\ell_0^v) },
\end{equation}
clearly holds, since when the left-hand-side is $0$
the right-hand-side is positive, and when
the left-hand-side is $1$ (i.e., when $v'=v_0$) the right-hand-side is greater than $1$.

Now take $k\ge 1$ and consider the event on the left-hand-side.
For each $i=1,2,\ldots,n_{v'}$, let $v_i\in V(\GG)$ be the neighbour of 
$v'$. In order for the event 
$\{X_k^v-\ell_0^v=\ell_k^v, v \in V(\GG), A_{v',k}\}$
to happen, it must be 
$I_{k-1}=v_i$ for some
$v_i\sim v'$, $\ell_k^{v'}> \ell_0^{v'}$, and 
furthermore it must be $\{I_k=v'\}$. Therefore
\begin{multline*}
{\mathbb{P}}^{\GG}(X_k^v-\ell_0^v=\ell_k^v, v \in V(\GG), A_{v',k})\\
=\sum_{i=1}^{d_{v'}}{\mathbb{P}}^{\GG}(X_{k-1}^v=\ell_k^v+\ell_0^v,\, \forall v\neq v',
X_{k-1}^{v'}=\ell_k^{v'}-1+\ell_0^{v'}, A_{v_i,k-1})\\
\times\frac{w(\ell_k^{v'}-1+\ell_0^{v'})}{w(\ell_k^{v'}-1+\ell_0^{v'}) +\sum_{v\neq v'\in \NNs_{v_i}}w(\ell_k^v+\ell_0^v)}.
\end{multline*}
Given the above and (\ref{level0vrrw}), the proof follows now by the same induction arguments on $k\ge 1$ as in the proof of Proposition 19 from \cite{CotarLimic} and will be omitted.
\end{proof}

Let $\ell_0^{v_i}\in {\mathbb{N}}^+, v_i\in V(\GG), i=1,\ldots,\nn,$ be the initial vertex weights. Denote again by $S(\nn)$ the set of all permutations of $\{1,2,\ldots,\nn\}$ and by $\sigma$ an arbitrary element in $S(\nn)$. From Proposition \ref{Pfingraph1v}, it follows by the same reasoning as in Proposition \ref{edgegen} that
\begin{proposition}
\label{vertgen}
Let $k\geq 1$ and
$v'\in V(\GG)$ and denote by $A_{v',k}$ 
the event $\{I_k=v'\}$.
Then for any $\ell_k^i\in\mathbb{N}, i=1,\ldots,\nn,$ such that 
$0\le\ell_k^{\nn}\le\ldots \le\ell_k^i\le \ldots\ell_k^1\le {k}$ and
$\sum_{i=1}^{\nn}\ell_k^i={k}$, we have 
\begin{eqnarray}
\label{EPfingraph2v}
{\mathbb{P}}^{\GG}(R_k^i=\ell_k^i, i=1,\ldots, \nn, A_{v',k})
&\leq&\frac{\prod_{v\in V(\GG), v\neq v_0} w(\ell_0^v)}
{\min_{v \in V(\GG)}\, w(\ell_0^v)}
\cdot 
  \sum_{\sigma\in S(\nn)} \sum_{j=1}^{\nn}\frac{\sum_{i=1, i\neq j}^{\nn}w\left(\ell_k^{i}+\ell_0^{v_{\sigma(i)}}\right)}
{\prod_{i=1, i\neq j}^{\nn} w\left(\ell_k^i+\ell_0^{v_{\sigma(i)}}\right)}\nonumber\\
&=:&\Upsilon(w,\nn, (\ell_0^{v_i})_{i=1,\ldots,\nn}, (\ell_k^i)_{i=1,\ldots,\nn}),
\end{eqnarray}
and 
\begin{equation}
\label{EPfingraph3v}
{\mathbb{P}}^{\GG}(R_k^i=\ell_k^i, i=1,\ldots, \nn)
\leq \nn \Upsilon(w,\nn, (\ell_0^{v_i})_{i=1,\ldots,\nn}, (\ell_k^i)_{i=1,\ldots,\nn}).
\end{equation}
\end{proposition}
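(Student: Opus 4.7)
The plan is to mimic the proof of Proposition \ref{edgegen}, transcribing the edge argument to vertices while tracking the one genuinely new feature of the VRRW bound in Proposition \ref{Pfingraph1v}, namely that the denominator of the path-independent estimate excludes the current position $v'$ rather than running over all indices. Consequently the relabelling by a permutation in $S(\nn)$ no longer produces a symmetric expression, and an extra index $j\in\{1,\ldots,\nn\}$ must be inserted to book-keep the location of $v'$ inside the order statistics. Since (\ref{EPfingraph3v}) is obtained from (\ref{EPfingraph2v}) by summing over $v'\in V(\GG)$ and collecting the factor $\nn=|V(\GG)|$, I focus on (\ref{EPfingraph2v}).

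First I would decompose the event $\{R_k^i=\ell_k^i,\,i=1,\ldots,\nn\}$ as the union over $\sigma\in S(\nn)$ of the configuration events $\{X_k^{v_i}-\ell_0^{v_i}=\ell_k^{\sigma(i)},\, i=1,\ldots,\nn\}$, and intersect with $A_{v',k}$. Applying Proposition \ref{Pfingraph1v} to each configuration event yields, for $v'=v_j$, the bound
\[
{\mathbb{P}}^{\GG}(R_k^i=\ell_k^i,\,i=1,\ldots,\nn,A_{v_j,k})\le \frac{\prod_{v\in V(\GG),\,v\neq v_0}w(\ell_0^v)}{\min_{v\in\NNs_{v_0}}w(\ell_0^v)}\sum_{\sigma\in S(\nn)}\frac{\sum_{v_i\in\NNs_{v_j}}w(\ell_k^{\sigma(i)}+\ell_0^{v_i})}{\prod_{i=1,\,i\neq j}^{\nn}w(\ell_k^{\sigma(i)}+\ell_0^{v_i})}.
\]
Two elementary relaxations then bring this into the form of $\Upsilon$: one enlarges $\NNs_{v_j}$ to $\{v_i:i\neq j\}$ in the numerator (justified because $\NNs_{v_j}$ is a subset of vertices distinct from $v_j$), and one uses $\min_{v\in\NNs_{v_0}}w(\ell_0^v)\ge \min_{v\in V(\GG)}w(\ell_0^v)$.

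At this point the bound still depends on the specific index $j$ via the exclusion $i\neq j$ in both the numerator and denominator. Since every term in the sum over $\sigma$ is non-negative, I would upper bound by summing over all $j'\in\{1,\ldots,\nn\}$, which absorbs the dependence on the actual location of $v'$ and produces exactly the double sum $\sum_{\sigma}\sum_{j=1}^{\nn}$ appearing in the definition of $\Upsilon$. Finally, the change of variables on $\sigma$ that was used in Proposition \ref{edgegen} — namely the identity
\[
\sum_{\sigma\in S(\nn)}\frac{\sum_{i\neq j}w(\ell_k^{\sigma(i)}+\ell_0^{v_i})}{\prod_{i\neq j}w(\ell_k^{\sigma(i)}+\ell_0^{v_i})}=\sum_{\sigma\in S(\nn)}\frac{\sum_{i\neq j}w(\ell_k^{i}+\ell_0^{v_{\sigma(i)}})}{\prod_{i\neq j}w(\ell_k^{i}+\ell_0^{v_{\sigma(i)}})}
\]
obtained by substituting $\sigma\mapsto\sigma^{-1}$ — converts the expression to the form stated in (\ref{EPfingraph2v}). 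The inequality (\ref{EPfingraph3v}) follows by summing (\ref{EPfingraph2v}) over $v'\in V(\GG)$.

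The only mildly delicate point I anticipate is the $\sum_{j}$ relaxation: one must be careful to insert it \emph{after} applying Proposition \ref{Pfingraph1v} (so that the single term corresponding to the true position $v'=v_j$ is dominated by the full sum) and \emph{before} the permutation rewriting, otherwise the indices $i\neq j$ and $\sigma(i)$ can easily get entangled. Everything else is a direct transcription of the edge-reinforced argument.
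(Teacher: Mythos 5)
Your plan matches the paper's proof in all essentials: decompose $\{R_k^i=\ell_k^i,\, i=1,\ldots,\nn\}$ into a union over $\sigma\in S(\nn)$ of configuration events, apply Proposition~\ref{Pfingraph1v} to each, enlarge $\NNs_{v'}$ to all indices $i\neq j$ and relax the $\min$ in the prefactor, then absorb the dependence on the position of $v'$ with a $\sum_j$ and finish by the $\sigma\mapsto\sigma^{-1}$ rewriting. The one place you need to be careful is the displayed intermediate identity. For a \emph{fixed} $j$, it is false: substituting $\sigma\mapsto\sigma^{-1}$ and relabelling $i'=\sigma^{-1}(i)$ turns the exclusion $i\neq j$ into $i'\neq\sigma^{-1}(j)$, so you get
\[
\sum_{\sigma\in S(\nn)}\frac{\sum_{i\neq j}w(\ell_k^{\sigma(i)}+\ell_0^{v_i})}{\prod_{i\neq j}w(\ell_k^{\sigma(i)}+\ell_0^{v_i})}
=\sum_{\sigma\in S(\nn)}\frac{\sum_{i\neq \sigma^{-1}(j)}w(\ell_k^{i}+\ell_0^{v_{\sigma(i)}})}{\prod_{i\neq \sigma^{-1}(j)}w(\ell_k^{i}+\ell_0^{v_{\sigma(i)}})},
\]
and the excluded index now depends on $\sigma$. (Already for $\nn=3$ and generic $\ell_0^{v_i}$ the identity you wrote fails.) This is exactly the ``entanglement'' you correctly anticipate in your last paragraph, but your formula then claims it is not there. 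The repair is immediate: because you have already inserted the sum over $j'$, for each fixed $\sigma$ the map $j'\mapsto\sigma^{-1}(j')$ is a bijection of $\{1,\ldots,\nn\}$, so the full double sum $\sum_{j'}\sum_{\sigma}$ is invariant under the change of variables and equals $\sum_{j}\sum_{\sigma}\frac{\sum_{i\neq j}w(\ell_k^{i}+\ell_0^{v_{\sigma(i)}})}{\prod_{i\neq j}w(\ell_k^{i}+\ell_0^{v_{\sigma(i)}})}$, which is precisely $\Upsilon$. The paper sidesteps the issue by performing the relaxation $\sum_j$ and the permutation change of variables in a single inequality, so that the $\sigma$-dependent excluded index $\sigma^{-1}(m)$ is immediately dominated by the sum over all $j$. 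Once the identity is corrected to a statement about the double sum, your proof is complete and coincides with the paper's.
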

\begin{proof}
We will only prove (\ref{EPfingraph2v}) as (\ref{EPfingraph3v}) follows immediately from (\ref{EPfingraph2v}) by summing over all possible vertices $A_{v',k}, v'\in V(\GG)$. 

We note first that the event $\{R_k^i=\ell_k^i, i=1,\ldots, \nn\}$ is a union over $\sigma\in S(\nn)$ of the events $\{X_k^{v_{i}}-\ell_0^{v_{i}}=\ell_k^{\sigma(i)}, i=1,\ldots,\nn\}$.
Assume $v'=v_m$ and $\NN_{v'}=\{v_{j_1},\ldots, v_{j_{d_{v'}}}\}, m,j_1,\ldots, j_{d_{v'}}\in \{1,\ldots,\nn\}$, with $m, j_1, \ldots, j_{d_{v'}}$ all distinct, that is, $m\neq j_1\neq\ldots\neq j_{d_{v'}}$. Then
\begin{eqnarray*}
\lefteqn{{\mathbb{P}}^{\GG}(R_k^i=\ell_k^i, i=1,\ldots, \nn, A_{v',k})}\\
&\le&\sum_{\sigma\in S(\nn)} {\mathbb{P}}^{\GG}(X_k^{v_i}-\ell_0^{v_i}=\ell_k^{\sigma(i)}, i=1,\ldots,\nn, A_{v',k})\\
&\le&\sum_{\sigma\in S(\nn)}\,\,\frac{\prod_{v\in V(\GG), v\neq v_0} w(\ell_0^v)}
{\min_{v\in {\NN}_{v_0}}\, w(\ell_0^v)}
\cdot 
\frac{\sum_{s=1}^{d_{v'}}w(\ell_k^{\sigma(j_s)}+\ell_0^{v_{j_s}})}
{\prod_{i=1, i\neq m}^{\nn} w\left(\ell_k^{\sigma(i)}+\ell_0^{v_i}\right)}\\
&\le& \frac{\prod_{v\in V(\GG), v\neq v_0} w(\ell_0^v)}
{\min_{v\in {\NN}_{v_0}}\, w(\ell_0^v)}\cdot\sum_{\sigma\in S(\nn)}\frac{\sum_{i=1, i\neq m}^{\nn}w\left(\ell_k^{\sigma(i)}+\ell_0^{v_i}\right)}
{\prod_{i=1, i\neq m}^{\nn} w\left(\ell_k^{\sigma(i)}+\ell_0^{v_i}\right)}\\
&\le& \frac{\prod_{v\in V(\GG), v\neq v_0} w(\ell_0^v)}
{\min_{v\in {\NN}_{v_0}}\, w(\ell_0^v)}\cdot\sum_{\sigma\in S(\nn)}\sum_{j=1}^{\nn}\frac{\sum_{i=1, i\neq j}^{\nn}w\left(\ell_k^{i}+\ell_0^{v_{\sigma(i)}}\right)}
{\prod_{i=1, i\neq j}^{\nn} w\left(\ell_k^{i}+\ell_0^{v_{\sigma(i)}}\right)}.
\end{eqnarray*}

\end{proof}
\begin{remark}
\begin{itemize}
\item [(a)] While the order statistics formulas for edge-reinforced walk from Proposition \ref{Pfingraph1}  contain the weights corresponding to all the edges incident to any given vertex $v$, the similar formulas for vertex-reinforced walk from Proposition \ref{Pfingraph1v} contain the weights corresponding to all the vertices incident to any given vertex $v$, but crucially do not contain the weight for the vertex $v$. Formula-wise, there is one less term in each product than there was in the edge-reinforced random walk case. This greatly affects the results and the computations involved.
\item [(b)] Since in computing the various probabilities in Proposition \ref{Pfingraph1v} by means of Proposition \ref{Pfingraph1v} above we can re-scale $w(k)$ to $w(k)/w_{\max}(\ell_0),$ where $w_{\max}(\ell_0):=\max_{v\in V(\GG)} w(\ell_0^v)$, we will take in our computations below $w(\ell_0^v)\le 1$ for all $v\in V(\GG)$. This means that the term $\prod_{v\in V(\GG), v\neq v_0} w(\ell_0^v)$ from the formulas in Proposition \ref{vertgen} will not appear in our computations below.
\end{itemize}
\end{remark}

\subsubsection{Attraction sets on finite graphs}

We will first give an alternative definition of the event $\{\GG_\infty~\mbox{has exactly two vertices}\}$ by using the order statistics for the number of vertex traversals. First, since by definition the sequence of random variables $R_k^{3}, k\ge 1,$ is non-decreasing  in $k$ there exists $R_\infty^{3}=\lim_{k\rightarrow\infty} R_k^{3}$, which may or may not be finite. We have immediately for any finite graph that
$$\{\GG_\infty ~\mbox{has exactly two vertices}\}=\{R_\infty^3<\infty\}.$$
We will use our alternative definition above to prove in this section
\begin{theorem}
\label{mainvertfin}
Let $\GG$ be a finite graph with $\nn\ge 3$ vertices. If $w$ satisfies 
\begin{equation}
\label{maxfinv}
\max_{v\in V(\GG)}\sum _{i\geq 1}^{\infty}\frac{i}{w(i+\ell_0^v)} < \infty,
\end{equation}
then the walk traverses exactly two random neighbouring attracting vertices at all large times a.s.,
that is
$${\mathbb{P}}^{\GG}(\GG_\infty~\mbox{has exactly two vertices}) = 1.$$
\end{theorem}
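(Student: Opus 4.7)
The plan is to mirror the proof of Theorem \ref{mainedgefin}, working now with the third-largest order statistic $R_\infty^3=\lim_{k\to\infty}R_k^3$ in place of $R_\infty^2$. First, since the VRRW is a nearest-neighbour walk on a finite connected graph $\GG$, a pigeonhole argument (some vertex is visited infinitely often, and each exit from it lands on one of its finitely many neighbours) forces $|\GG_\infty|\ge 2$ almost surely, with the two attracting vertices necessarily adjacent. Hence
\[
\{\GG_\infty~\text{has exactly two vertices}\} = \{R_\infty^3<\infty\},
\]
and it suffices to prove $R_\infty^3<\infty$ almost surely.

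The core of the argument is a path-independent upper bound on $\mathbb{P}^{\GG}(R_k^3=\ell_k^3)$, obtained by summing the estimate (\ref{EPfingraph3v}) of Proposition \ref{vertgen} over all configurations $(\ell_k^1,\ell_k^2,\ell_k^4,\ldots,\ell_k^{\nn})$ compatible with $R_k^3=\ell_k^3$, the ordering $\ell_k^{\nn}\le\cdots\le\ell_k^3\le\ell_k^2\le\ell_k^1$, and $\sum_i\ell_k^i=k$. I restrict, for clarity, to equal initial weights $\ell_0^v\equiv\ell_0$; the general case follows analogously. After the trivial permutation sum, a short algebraic identity reduces
\[
\sum_{j=1}^{\nn}\frac{\sum_{i\ne j}w(\ell_k^i+\ell_0)}{\prod_{m\ne j}w(\ell_k^m+\ell_0)} \;=\; 2\sum_{\{i_1,i_2\}:\,i_1\ne i_2}\frac{1}{\prod_{m\ne i_1,i_2}w(\ell_k^m+\ell_0)},
\]
the outer sum running over unordered pairs. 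The dominant contribution comes from the pair $\{i_1,i_2\}=\{1,2\}$: the summand $1/\prod_{m\ge 3}w(\ell_k^m+\ell_0)$ is then independent of $(\ell_k^1,\ell_k^2)$, and the VRRW-specific constraint $2R_k^1\le k+1$ from (\ref{impineq}) caps the number of admissible pairs $(\ell_k^1,\ell_k^2)$ with $\ell_k^1+\ell_k^2=k-\sum_{i\ge 3}\ell_k^i$ at $O(\nn\,\ell_k^3)$. Bounding the residual sum over $(\ell_k^4,\ldots,\ell_k^{\nn})$ via the analogue of Proposition \ref{2orderstat}(a) yields a principal contribution of order $\ell_k^3/w(\ell_k^3+\ell_0)$. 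In each of the remaining pair cases the summand contains a factor $w(\ell_k^i+\ell_0)^{-1}$ with $i\in\{1,2\}$, hence with $\ell_k^i$ of order $k$ (by $\ell_k^1+\ell_k^2\ge k-(\nn-2)\ell_k^3$); the associated sums are therefore dominated by tail estimates $\sum_{i\ge T}1/w(i+\ell_0)\le M'/T$ (valid because $\sum_i i/w(i+\ell_0)<\infty$) and yield strictly lower-order contributions.

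Under hypothesis (\ref{maxfinv}), the analogue of Lemma \ref{gexistence} delivers an increasing $g\colon\mathbb{N}\to[0,\infty)$ with $g(\ell)\to\infty$ and $M:=\sum_\ell \ell\,g(\ell+\ell_0)/w(\ell+\ell_0)<\infty$. Combining the foregoing bound with monotone convergence then gives
\[
\mathbb{E}^{\GG}\bigl[g(R_\infty^3)\bigr] \;=\; \lim_{k\to\infty}\sum_{\ell_k^3=0}^{[k/2]}g(\ell_k^3)\,\mathbb{P}^{\GG}(R_k^3=\ell_k^3) \;\le\; C_{\nn,\ell_0}\,M \;<\;\infty,
\]
so $g(R_\infty^3)<\infty$ almost surely and hence $R_\infty^3<\infty$ almost surely, which proves the theorem.

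The principal obstacle is the combinatorial bookkeeping in the bound on $\mathbb{P}^{\GG}(R_k^3=\ell_k^3)$: in contrast to the edge-reinforced case, where the free range of $R_k^1$ over $[k/\nn,\,k-\ell_k^2]$ produced a clean summable factor $\sum_i 1/w(i+\ell_0)$, here the VRRW rigidity $2R_k^1\le k+1$ confines $(R_k^1,R_k^2)$ to a window of width $O(\nn\ell_k^3)$ around $k/2$, generating the unavoidable extra factor $\ell_k^3$ in the bound. This factor is precisely what forces the upgrade of the summability condition from $\sum 1/w(i+\ell_0)<\infty$ (adequate for ERRW) to $\sum i/w(i+\ell_0)<\infty$ in (\ref{maxfinv}).
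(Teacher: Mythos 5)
Your overall strategy matches the paper's: reduce $\{\GG_\infty~\mbox{has exactly two vertices}\}$ to $\{R_\infty^3<\infty\}$, bound $\mathbb{P}^{\GG}(R_k^3=\ell_k^3)$ via Proposition \ref{vertgen}, decompose the double sum over unordered pairs $\{i_1,i_2\}$ of missing factors, and sum against an auxiliary increasing $g$ from Lemma~\ref{gexistence} (applied to $p_\ell=(\ell+\ell_0)/w(\ell+\ell_0)$). Your treatment of the dominant pair $\{1,2\}$ is also correct: the VRRW constraint $2R_k^1\le k+1$ caps the number of admissible $(\ell_k^1,\ell_k^2)$ at $O(\nn\,\ell_k^3)$, yielding the principal term $\ell_k^3/w(\ell_k^3+\ell_0)$.

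The gap is in your treatment of the residual pairs, in particular the pair $\{3,j\}$ with $j\ge 4$ (Term~5 in the paper's (\ref{eqmainverta})). There the product $\prod_{i\ne 3,j}w(\ell_k^i+\ell_0)$ does \emph{not} omit $w(\ell_k^3+\ell_0)$, so the resulting contribution to $\mathbb{P}^{\GG}(R_k^3=\ell_k^3)$ looks like $\sum_{i\ge\max([k/\nn],\ell_k^3)}1/w(i+\ell_0)$, with no compensating factor of $1/w(\ell_k^3+\ell_0)$. Your tail estimate $\sum_{i\ge T}1/w(i+\ell_0)\le M'/T$ with $T\asymp k$ bounds this uniformly by $O(1/k)$, but that is not good enough: one then has
\begin{equation*}
\sum_{\ell_k^3=0}^{[k/3]}g(\ell_k^3)\cdot O(1/k)\ \asymp\ g\big([k/3]\big),
\end{equation*}
which is unbounded as $k\to\infty$ since $g\uparrow\infty$. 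So "strictly lower-order" does not by itself yield $\mathbb{E}^{\GG}[g(R_\infty^3)]\le C_{\nn,\ell_0}M$. The paper closes exactly this step by retaining the $\ell_k^3$-dependence in the lower summation limit and then \emph{interchanging} the order of summation between $\ell_k^3$ and $i$ (see (\ref{3rdtermv})):
\begin{equation*}
\sum_{\ell_k^3=0}^{[k/3]}g(\ell_k^3)\sum_{i=\ell_k^3}^{k-\ell_k^3}\frac{1}{w(i+\ell_0)}\ \le\ \sum_{i=0}^{k}\frac{(i+1)\,g(i+\ell_0)}{w(i+\ell_0)}\ <\ \infty,
\end{equation*}
the last bound being exactly what the construction of $g$ with $\sum_\ell \ell\,g(\ell+\ell_0)/w(\ell+\ell_0)<\infty$ guarantees. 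You need either this interchange, or to explicitly impose sublinearity on $g$ and apply the tail estimate twice to get $O(1/k^2)$; neither appears in the proposal, so the final display $\mathbb{E}^{\GG}[g(R_\infty^3)]\le C_{\nn,\ell_0}M$ is asserted without justification for the residual pieces.
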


In preparation for the proof of attraction sets, we will first obtain in Proposition \ref{2orderstatv} upper bounds for the distribution of $R_k^{3}$, that is, the distribution of the 3rd largest weight at time $k$.  For simplicity and clarity of computations, we will consider in detail in all our calculations below only the case where all vertices have \textit{equal} initial weights $\ell_0^v\equiv\ell_0\in\R^+$, the case with general initial weights $\ell_0^v\in\R^+,v\in V(\GG),$ following by similar arguments by means of Proposition \ref{vertgen}.  

\begin{proposition}
\label{2orderstatv}
Assume that (\ref{Azerov}) holds.  For $\ell_0\in\R^+$, let $\ell^v_0 = \ell_0$ for all vertices $v\in V(\GG)$. Then for all  $k\ge 1$ and $0\le\ell_k^3\le [k/3],\ell_k^3\in\mathbb{N}$, we have for $C(w,\nn, \ell_0)>0$ depending only on $w, \nn$ and $\ell_0$ 
\begin{equation*}
{\mathbb{P}}^{\GG}(R_k^{3}=\ell_k^{3})\le C(w,\nn, \ell_0)\left[\frac{\ell_k^3}{w(\ell_k^3+\ell_0)}+ \sum_{i=\ell_k^3}^{{k}-\ell_k^3}\frac{1}{w(i+\ell_0)}\right].
\end{equation*}



\end{proposition}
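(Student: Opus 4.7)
The strategy mirrors that of Proposition \ref{2orderstat}: start from the full-order-statistics bound of Proposition \ref{vertgen}, then sum over all admissible $(\nn-1)$-tuples of remaining weights to extract the marginal of the single statistic $R_k^3$. With equal initial weights $\ell_0^v \equiv \ell_0$, the summand in (\ref{EPfingraph3v}) is independent of $\sigma \in S(\nn)$, so the sum over $\sigma$ collapses to a factor $\nn!$. Applying the algebraic identity $\frac{\sum_{i \neq j} w(\ell_k^i + \ell_0)}{\prod_{i \neq j} w(\ell_k^i + \ell_0)} = \sum_{i \neq j} \frac{1}{\prod_{i' \neq i, j} w(\ell_k^{i'} + \ell_0)}$, the bound (\ref{EPfingraph3v}) becomes
\[
\mathbb{P}^{\GG}(R_k^i = \ell_k^i,\, i = 1,\ldots,\nn) \leq C_1 \sum_{\substack{1 \leq i, j \leq \nn \\ i \neq j}} \frac{1}{\prod_{i' \neq i, j} w(\ell_k^{i'} + \ell_0)},
\]
where $C_1 = C_1(w, \nn, \ell_0)$ absorbs the prefactor $\nn!\,w(\ell_0)^{-1}$.

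To extract the marginal of $R_k^3$, I would sum over $(\ell_k^1, \ell_k^2, \ell_k^4, \ldots, \ell_k^\nn) \in \mathbb{N}^{\nn-1}$ subject to the ordering $0 \leq \ell_k^\nn \leq \cdots \leq \ell_k^4 \leq \ell_k^3 \leq \ell_k^2 \leq \ell_k^1$ and the mass identity $\sum_{i \neq 3} \ell_k^i = k - \ell_k^3$, then partition the outer pair-sum according to whether $3 \in \{i, j\}$.

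Group (I) consists of pairs with $3 \notin \{i, j\}$. Here $w(\ell_k^3 + \ell_0)$ appears in every denominator product and can be factored out. In the sub-case $\{i, j\} = \{1, 2\}$ the residual summand depends only on $(\ell_k^4, \ldots, \ell_k^\nn)$; in the other sub-cases one of the indices $\ell_k^{i'}$ with $i' \geq 4$ becomes a free variable whose range is confined to $[0, \ell_k^3]$ by the ordering. After reorganizing the summation in each sub-case and applying (\ref{rec2}) to the tail $(\ell_k^4, \ldots, \ell_k^\nn)$, the combined combinatorial and analytic estimates produce the first term $\ell_k^3 / w(\ell_k^3 + \ell_0)$, with the $\ell_k^3$ prefactor originating from the $O(\ell_k^3)$ range of the relevant free variables. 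Group (II) consists of pairs $\{i, j\} = \{3, a\}$ with $a \neq 3$, so $w(\ell_k^3 + \ell_0)$ is absent. The significant sub-cases are $a \in \{1, 2\}$, which mirror the passage (\ref{yes2}) of the edge proof: one of $\ell_k^1, \ell_k^2$ plays the role of a running summation index over $[\ell_k^3, k - \ell_k^3]$, and applying (\ref{rec2}) to the remaining $Q_{\nn-3}$-type sum yields the second term $\sum_{i = \ell_k^3}^{k - \ell_k^3} \frac{1}{w(i + \ell_0)}$. The sub-cases $a \in \{4, \ldots, \nn\}$ contribute bounds of the same two forms and are absorbed into the constant.

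The main obstacle I anticipate is the bookkeeping for Group (I): unlike in Proposition \ref{2orderstat}, where the excluded-index term immediately produced a single factor $\frac{1}{w(\ell_k^2 + \ell_0)}$, here the two-dimensional freedom in $(\ell_k^1, \ell_k^2)$ combined with the ordering $\ell_k^1 \geq \ell_k^2 \geq \ell_k^3$ must be handled with care, likely by splitting the range of $\ell_k^2$ around $2 \ell_k^3$ or by further subdividing according to the magnitude of $T := \sum_{i' \geq 4} \ell_k^{i'}$, so as to extract the tight $\ell_k^3$ prefactor rather than a crude $k$ prefactor, the latter being incompatible with the subsequent integration against the auxiliary function $g$ under assumption (\ref{maxfinv}).
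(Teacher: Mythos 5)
Your high-level strategy is exactly the paper's: start from Proposition \ref{vertgen}, collapse the permutation sum, rewrite via the algebraic identity into a double sum over excluded pairs $\{i,j\}$, and split according to whether $3\in\{i,j\}$. Your mechanism for the first term (the $O(\ell_k^3)$ range of the free variables in Group (I)) matches the paper's Steps 1--2 of (\ref{eqmainverta}). Note, though, that the paper's explicit five-term decomposition covers $\{1,2\}$, $\{1,j\}_{j\ge4}$, $\{2,j\}_{j\ge4}$, $\{j,h\}_{j,h\ge4}$ (your Group (I)) and $\{3,j\}_{j\ge4}$ (part of your Group (II)); the pairs $\{1,3\},\{2,3\}$ that you single out are in fact omitted there, and indeed need a small supplementary argument.

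The one place you risk going wrong is your Group (II) claim. You declare $\{3,a\}$ with $a\in\{1,2\}$ the significant sub-cases and say the sub-cases $a\ge4$ are ``absorbed into the constant''; but the paper's Step 3 --- its most delicate step --- is devoted precisely to $\{3,j\}$, $j\ge 4$, and it is that step which produces the tail $\sum_{i\ge\ell_k^3}1/w(i+\ell_0)$. A crude upper bound on those terms (for instance, dropping the ordering constraint $\ell_k^1\ge\ell_k^2\ge\ell_k^3$ and applying (\ref{rec2}) directly to the residual $(\nn-2)$-fold sum, with the free variable $\ell_k^j$ outside) gives only a constant of order $(c(\ell_0))^{\nn-2}$ which does \emph{not} vanish as $\ell_k^3\to\infty$. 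That is fatal: in the proof of Theorem \ref{mainvertfin} one sums $g(\ell_k^3)\,{\mathbb{P}}^{\GG}(R_k^3=\ell_k^3)$ over $\ell_k^3$ with $g$ increasing to infinity, so each summand in the bound on ${\mathbb{P}}^{\GG}(R_k^3=\ell_k^3)$ must vanish as $\ell_k^3\to\infty$. The vanishing is secured only by retaining the constraint $\ell_k^1\ge\ell_k^3$ (see (\ref{3vbc}) and (\ref{simpk})), which turns the residual $\sum_{\ell_k^1}1/w(\ell_k^1+\ell_0)$ into a tail starting at $\ell_k^3$. Your $a\in\{1,2\}$ pairs do give the same tail, but via the one of $\ell_k^1,\ell_k^2$ that actually remains in the denominator ($\ell_k^2$ when $a=1$, $\ell_k^1$ when $a=2$); your phrase ``one of $\ell_k^1,\ell_k^2$ plays the role of a running summation index'' should be read that way.
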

\begin{proof}
 Fix ${k}\ge 1$ and $\ell_k^{3}$ such that $0\le\ell_k^{3}\le [{k}/3]$. To simplify notation, we will denote by
 \begin{equation}
\label{lsumbig}
{\cal{L}}_{\nn,k}(\ell_k^3):=\!\!\!\{(\ell_k^1,\ell_k^2,\ell_k^4,\ldots, \ell_k^\nn)\in \mathbb{N}^{\nn-1}: \,0\le \ell_k^\nn\le\ldots\le\ell_k^3\le\ell_k^2\le\ell_k^1\le [(k+1)/2], \sum_{i=1\atop i\neq 3}^{\nn}\ell_k^i={k}-\ell_k^3\}.
\end{equation}
By (\ref{EPfingraph3v}) from Proposition \ref{vertgen}, we have
\begin{eqnarray}
\label{eqmainvert}
{\mathbb{P}}^{\GG}(R_k^{3}=\ell_k^{3})&\le&\sum_{(\ell_k^1,\ell_k^2,\ell_k^4,\ldots, \ell_k^\nn)\in {\cal{L}}_{\nn,k}(\ell_3)} {\mathbb{P}}^{\GG}(R_k^i=\ell_k^i, i=1,\ldots, \nn)\nonumber\\
& \leq&\frac{\nn\cdot\nn!}{w(\ell_0)}
\cdot 
\sum_{(\ell_k^1,\ell_k^2,\ell_k^4,\ldots, \ell_k^\nn)\in {\cal{L}}_{\nn,k}(\ell_3)}\,\,\,\,\sum_{j=1}^{\nn}
\frac{\sum_{i=1, i\neq j}^{\nn}w\left(\ell_k^{i}+\ell_0\right)}
{\prod_{i=1, i\neq j}^{\nn} w\left(\ell_k^i+\ell_0\right)},
\end{eqnarray}
We will next further estimate (\ref{eqmainvert}). We will first  look at the "warm-up" case $\nn=3$, which will give us insight into the computations for the more complicated $\nn\ge 4$ case.
\vspace{0.5mm}

(a) In this simpler case, (\ref{eqmainvert}) becomes
\begin{equation}
\label{easy1v}
{\mathbb{P}}^{\GG}(R_k^{3}=\ell_k^{3})\le\frac{6\cdot 3!}{w(\ell_0)}\sum_{(\ell_k^1,\ell_k^2)\in\mathbb{N}^2:0\le\ell_k^3\le\ell_k^2\le\ell_k^1\le [(k+1)/2]\atop\ell_k^1+\ell_k^2={k}-\ell_k^3}\left(\frac{1}{w(\ell_k^1+\ell_0)}+\frac{1}{w(\ell_k^2+\ell_0)}+\frac{1}{w(\ell_k^3+\ell_0)}\right).
\end{equation}
Since $\ell_k^1+\ell_k^2=k-\ell_k^3$, we have $2\ell_k^1\ge k-\ell_k^3$; using also that $\ell_k^3\le [k/3]$, we have $\ell_k^3\le [(k-\ell_k^3)/2]\le\ell_k^1\le \min([(k+1)/2], k-2\ell_k^3)$.
Then (\ref{easy1v}) becomes 
\begin{eqnarray*}
{\mathbb{P}}^{\GG}(R_k^{3}=\ell_k^{3})&\le&\frac{6\cdot 3!}{w(\ell_0)}\sum_{\ell_k^1=[(k-\ell_k^3)/2]}^{\min([(k+1)/2], k-2\ell_k^3)}\left(\frac{1}{w(\ell_k^3+\ell_0)}+\frac{1}{w(\ell_k^1+\ell_0)}+\frac{1}{w(k-\ell_k^3-\ell_k^1+\ell_0)}\right)\\
&\le&\frac{6\cdot 3!}{w(\ell_0)}\bigg[\sum_{\ell_k^1=[(k-\ell_k^3)/2]}^{[(k+1)/2]}\frac{1}{w(\ell_k^3+\ell_0)}+\sum_{\ell_k^1=\ell_k^3}^{k-2\ell_k^3}\bigg(\frac{1}{w(\ell_k^1+\ell_0)}+\frac{1}{w({k}-\ell_k^3-\ell_k^1+\ell_0)}\bigg)\bigg] \\
&\le&\frac{6\cdot 3!}{w(\ell_0)}\bigg[\frac{\ell_k^3}{w(\ell_k^3+\ell_0)}+2\sum_{i=\ell_k^3}^{{k}-2\ell_k^3}\frac{1}{w(i+\ell_0)}\bigg],
\end{eqnarray*}
where for the third inequality in the above we used in the first sum that the number of $\ell_k^1$ terms in the first sum is smaller than $\ell_k^3$; moreover, for the third sum in the inequality we made the substitution $i=k-\ell_k^3-\ell_k^1,$ with $\ell_k^3\le i\le k-2\ell_k^3$.

\vspace{0.5mm}

(b) In the more complicated $\nn\ge 4$ case, (\ref{eqmainvert}) becomes
\begin{eqnarray}
\label{eqmainverta}
{\mathbb{P}}^{\GG}(R_k^{3}=\ell_k^{3})& \leq&\frac{(\nn-1)\cdot\nn\cdot\nn!}{w(\ell_0)}
\cdot 
\sum_{(\ell_k^1,\ell_k^2,\ell_k^4,\ldots, \ell_k^\nn)\in {\cal{L}}_{\nn,k}(\ell_k^3)}\,\,\,\sum_{j,h=1\atop j\neq h}^{\nn}
\frac{1}
{\prod_{i=1, i\neq j, h}^{\nn} w\left(\ell_k^i+\ell_0\right)}\nonumber\\
&\le&\frac{(\nn-1)\cdot\nn\cdot\nn!}{w(\ell_0)}
\cdot 
\sum_{(\ell_k^1,\ell_k^2,\ell_k^4,\ldots, \ell_k^\nn)\in  {\cal{L}}_{\nn,k}(\ell_k^3)}\bigg[\frac{1}
{\prod_{i=3}^{\nn} w\left(\ell_k^i+\ell_0\right)}\nonumber\\
&&+\frac{1}{w(\ell_k^3+\ell_0)w(\ell_k^1+\ell_0)}\sum_{j=4}^{\nn}
\frac{1}
{\prod_{i=4, i\neq j}^{\nn} w\left(\ell_k^i+\ell_0\right)}\nonumber\\
&&+\frac{1}{w(\ell_k^3+\ell_0)w(\ell_k^2+\ell_0)}\sum_{j=4}^{\nn}
\frac{1}
{\prod_{i=4, i\neq j}^{\nn} w\left(\ell_k^i+\ell_0\right)}\nonumber\\
&&+\frac{1}{w(\ell_k^3+\ell_0)w(\ell_k^2+\ell_0)w(\ell_k^1+\ell_0)}\sum_{j,h=4\atop j\neq h}^{\nn}
\frac{1}
{\prod_{i=4, i\neq j, h}^{\nn} w\left(\ell_k^i+\ell_0\right)}\nonumber\\
&&+\frac{1}{w(\ell_k^1+\ell_0)w(\ell_k^2+\ell_0)}\sum_{j=4}^{\nn}
\frac{1}
{\prod_{i=4, i\neq j}^{\nn} w\left(\ell_k^i+\ell_0\right)}\bigg].
\end{eqnarray}

For the second inequality in the above, we split the terms in the double sum of the first inequality in five new sums, depending on which terms $w(\ell_k^i+\ell_0)$ and $w(\ell_k^j+\ell_0), i\neq j,$ are missing. We will next estimate separately each of the five sum terms in (\ref{eqmainverta}). Note also that the fourth sum term in the above only appears for $\nn\ge 5$, since for $\nn=3,4,$ the sums contain at most 2 product terms. 

We will use in our computations below the definition of $Q_m(a;,b;c), m, a, c\in\mathbb{N}, m\ge 1, b\in {\mathbb{R}}^+,$ from (\ref{eqndefq}) and formula (\ref{rec2}) from Proposition \ref{2orderstat}  (a). Moreover, to simplify formulas, we will denote in the proofs by (the possibly empty set)
\begin{multline}
\label{simpk}
{\cal{A}}_{\nn,k}(s,\ell_k^3):=\{(\ell_k^1,\ell_k^2)\in {\mathbb{N}}^2: \ell_k^1+\ell_k^2=s, \ell_k^3\le\ell_k^2\le\ell_k^1,\\ [{(k-1)}/2(\nn-1)]\le\ell_k^2\le\ell_k^1, [{k}/\nn]\le\ell_k^1\le [(k+1)/2]\},
\end{multline}
with the convention that summing over an empty set ${\cal{A}}_{\nn,k}(s,\ell_k^3)$ is equal to 0.

\textbf{Step 1:}  We will estimate here the first sum in (\ref{eqmainverta}), in which $w(\ell_k^1+\ell_0)$ and $w(\ell_k^2+\ell_0)$ are missing.

For any $(\ell_k^1,\ell_k^2,\ell_k^4,\ldots, \ell_k^\nn)\in {\cal{L}}_{\nn,k}(\ell_k^3)$, we have $2\ell_k^1+(\nn-3)\ell_k^3\ge\sum_{i=1,i\neq 3}\ell_k^i= k-\ell_k^3$, from which $2\ell_k^1\ge \max(k- (\nn-2)\ell_k^3, 2\ell_k^3)$. Therefore, if $k> \nn\ell_k^3$ we have
$$\ell_k^3\le [(k-(\nn-2)\ell_k^3)/2]\le \ell_k^1\le [(k+1)/2].$$
If $k\le \nn\ell_k^3$, and in view of $\ell_k^1\le [(k+1)/2]$, we have $\ell_k^3\le\ell_k^1\le 1+[\nn\ell_k^3/2]$. 
Putting both cases together, we get that for each fixed $\ell_k^1+\ell_k^2$, the number of  $(\ell_k^1,\ell_k^2)$ pairs in the first sum in (\ref{eqmainverta}) is smaller than $\nn\ell_k^3$. Therefore, since $2\ell_k^3\le\ell_k^1+\ell_k^2\le k-\ell_k^3$, the first term in (\ref{eqmainverta}) becomes
\begin{eqnarray}
\label{sum1va}
\lefteqn{\sum_{(\ell_k^1,\ell_k^2,\ell_k^4,\ldots, \ell_k^\nn)\in  {\cal{L}}_{\nn,k}(\ell_k^3)}\frac{1}
{\prod_{i=3}^{\nn} w\left(\ell_k^i+\ell_0\right)}}\nonumber\\
&\le&\frac{1}{w(\ell_k^3+\ell_0)}\sum_{s=2\ell_k^3}^{{k}-\ell_k^3}\,\,\sum_{(\ell_k^1,\ell_k^2)\in {\mathbb{N}}^2,\ell_k^1+\ell_k^2=s\atop |(\ell_k^1,\ell_k^2)|\le\nn\ell_k^3}\,\,\sum_{(\ell_k^4,\ldots, \ell_k^\nn)\in {\mathbb{N}}^{\nn-3}: \,0\le \ell_k^\nn\le\ldots\le\ell_k^3\atop \sum_{i=4}^{\nn}\ell_k^i=k-\ell_k^3-s}\frac{1}
{\prod_{i=4}^{\nn} w\left(\ell_k^i+\ell_0\right)}\nonumber\\
&\le&\frac{\nn\ell_k^3}{w(\ell_k^3+\ell_0)}\sum_{s=2\ell_k^3}^{{k}-\ell_k^3}\,\,\,\,\sum_{(\ell_k^4,\ldots, \ell_k^\nn)\in {\mathbb{N}}^{\nn-3}: \,0\le \ell_k^\nn\le\ldots\le\ell_k^3\atop \sum_{i=4}^{\nn}\ell_k^i=k-\ell_k^3-s}\frac{1}
{\prod_{i=4}^{\nn} w\left(\ell_k^i+\ell_0\right)}\nonumber\\
&=&\frac{\nn\ell_k^3}{w(\ell_k^3+\ell_0)}\sum_{s=2\ell_k^3}^{{k}-\ell_k^3}\,\,Q_{\nn-3}(k-\ell_k^3-s;\ell_0;\ell_k^3)\le\frac{\nn\ell_k^3}{w(\ell_k^3+\ell_0)} (c(\ell_0))^{\nn-3},
\end{eqnarray}
where for all $2\ell_k^3\le s\le k-\ell_k^3$, we have $k-\ell_k^3-s\ge 0$ since $0\le \ell_k^3\le [k/3]$. Moreover, for the last inequality in (\ref{sum1va}) we used (\ref{rec2}), and where we recall that $c(\ell_0):=\sum_{i=0}^\infty 1/w(\ell_0+i)$.

\textbf{Step 2:} We will estimate here the second, the third and the fourth terms in (\ref{eqmainverta}). For the second term,  where in each of the sums $w(\ell_k^2+\ell_0)$ and one of $w(\ell_k^j+\ell_0),  4\le j\le\nn,$ are missing, we have
\begin{eqnarray}
\label{sum2vbach}
\lefteqn{\sum_{(\ell_k^1,\ell_k^2,\ell_k^4,\ldots, \ell_k^\nn)\in {\cal{L}}_{\nn,k}(\ell_k^3)}
\frac{1}{w(\ell_k^3+\ell_0)w(\ell_k^1+\ell_0)}\sum_{j=4}^{\nn}\frac{1}{\prod_{i=4, i\neq j}^{\nn} w\left(\ell_k^i+\ell_0\right)}}\nonumber\\
&\le&\frac{1}{w(\ell_k^3+\ell_0)}\sum_{s=2\ell_k^3}^{{k}-\ell_k^3}\,\,\sum_{(\ell_k^1,\ell_k^2)\in {\cal{A}}_{\nn,k}(s,\ell_k^3)}\,\,\frac{1}{w(\ell_k^1+\ell_0)}\,\sum_{(\ell_k^4,\ldots, \ell_k^\nn)\in {\mathbb{N}}^{\nn-3}: \,0\le \ell_k^\nn\le\ldots\le\ell_k^3\atop \sum_{i=4}^{\nn}\ell_k^i={k}-\ell_k^3-s}\,\,\,\sum_{j=4}^{\nn}\frac{1}{\prod_{i=4, i\neq j}^{\nn} w\left(\ell_k^i+\ell_0\right)}\nonumber\\
&\le&\frac{1}{w(\ell_k^3+\ell_0)}\bigg(\sum_{\ell_k^1=\max([{k}/\nn],\ell_k^3)}^{{k}-2\ell_k^3}\frac{1}{w(\ell_k^1+\ell_0)}\bigg)\sum_{s=2\ell_k^3}^{{k}-\ell_k^3}\,\,\bigg[\sum_{(\ell_k^4,\ldots, \ell_k^\nn)\in {\mathbb{N}}^{\nn-3}: \,0\le \ell_k^\nn\le\ldots\le\ell_k^3\atop \sum_{i=4}^{\nn}\ell_k^i={k}-\ell_k^3-s}\,\,\,\bigg(\frac{1}{\prod_{i=5}^{\nn} w\left(\ell_k^i+\ell_0\right)}\nonumber\\
&&+\sum_{j=5}^{\nn}\frac{1}{\prod_{i=4}^{j-1}w(\ell_k^i+\ell_0)\times \prod_{i=j+1}^{\nn}w(\ell_k^{i}+\ell_0)}\bigg)\bigg].
\end{eqnarray}
In the above, we summed for the first inequality first after all the values that $(\ell_k^1,\ell_k^2)$ can take, and then after all the values that the remaining $(\ell_k^4,\ldots,\ell_k^{\nn})$ can take, given $(\ell_k^1,\ell_k^2)$. For the second inequality, we used that for each fixed $s$, with $2\ell_k^3\le s\le{k}-\ell_k^3$,
 $$\sum_{(\ell_k^1,\ell_k^2)\in {\cal{A}}_{\nn,k}(s,\ell_k^3)}\,\,\frac{1}{w(\ell_k^1+\ell_0)}\le \sum_{\ell_k^1=\max([{k}/\nn],\ell_k^3)}^{{k}-2\ell_k^3}\frac{1}{w(\ell_k^1+\ell_0)},$$
 and that $\sum_{\ell_k^1=\max([{k}/\nn],\ell_k^3)}^{{k}-2\ell_k^3}1/w(\ell_k^1+\ell_0)$ is independent of $s$. To get the big inner sum in the second inequality, we expanded the sum over $(\ell_k^4,\ldots, \ell_k^\nn)$ from the first inequality into two new sums, depending on which term $w(\ell_k^j+\ell_0), j\geq 4$, is missing from the product. 
 
We will expand next further the two inner sums in (\ref{sum2vbach}). For the first inner sum in (\ref{sum2vbach}), we have
\begin{eqnarray}
\label{sum2vbr1}
\sum_{(\ell_k^4,\ldots, \ell_k^\nn)\in {\mathbb{N}}^{\nn-3}: \,0\le \ell_k^\nn\le\ldots\le\ell_k^3\atop \sum_{i=4}^{\nn}\ell_k^i={k}-\ell_k^3-s}\,\,\,\frac{1}{\prod_{i=5}^{\nn} w\left(\ell_k^i+\ell_0\right)}
&\le&\sum_{\ell_k^4=0}^{\min(\ell_k^3, k-\ell_k^3-s)}\!\!\!\!\!\!\!\sum_{(\ell_k^5,\ldots, \ell_k^\nn)\in {\mathbb{N}}^{\nn-4}: 
\,0\le \ell_k^\nn\le\ldots\le\ell_k^4\atop \sum_{i=5}^{\nn}\ell_k^i={k}-\ell_k^3-s-\ell_k^4}\,\,\,\frac{1}{\prod_{i=5}^{\nn} w\left(\ell_k^i+\ell_0\right)}\nonumber\\
&\le&\sum_{\ell_k^4=0}^{\min(\ell_k^3, k-\ell_k^3-s)}Q_{\nn-4}({k}-s-\ell_k^3-\ell_k^4;\ell_0;\infty),
\end{eqnarray}
where for the first inequality we summed first after $\ell_k^4$ and then after the remaining $(\ell_k^5,\ldots, \ell_k^\nn)$ values, given $\ell_k^4$; we also used $\sum_{i=5}^{\nn}\ell_k^i={k}-\ell_k^3-s-\ell_k^4\ge 0$ and $0\le\ell_k^4\le\ell_k^3$, to obtain $0\le\ell_k^4\le\min (\ell_k^3, k-\ell_k^3-s)$. For the second inequality we used the definition of $Q_{\nn-4}$ from (\ref{eqndefq}).  

For the second inner sum in (\ref{sum2vbach}), we similarly have for each $5\le j\le\nn$
\begin{eqnarray}
\label{sum2vbr2}
\lefteqn{\sum_{(\ell_k^4,\ldots, \ell_k^\nn)\in {\mathbb{N}}^{\nn-3}: \,0\le \ell_k^\nn\le\ldots\le\ell_k^3\atop \sum_{i=4}^{\nn}\ell_k^i={k}-\ell_k^3-s}\,\,\,\frac{1}{\prod_{i=4}^{j-1}w(\ell_k^i+\ell_0)\times \prod_{i=j+1}^{\nn}w(\ell_k^{i}+\ell_0)}}\nonumber\\
&\le&\sum_{\ell_k^j=0}^{\min(\ell_k^3, k-\ell_k^3-s)}\,\,\,\sum_{(\ell_k^{4},\ldots, \ell_k^{j-1})\in {\mathbb{N}}^{j-4}: \,\ell_k^j\le\ell_k^{j-1}\ldots\le\ell_k^{4}\le\ell_k^3 \atop {(\ell_k^{j+1},\ldots, \ell_k^\nn)\in {\mathbb{N}}^{\nn-j}: \,0\le \ell_k^\nn\le\ldots\le\ell_k^{j}\atop \sum_{i=4, i\neq j}^{\nn}\ell_k^i={k}-\ell_k^3-s-\ell_k^j}} \frac{1}{\prod_{i=4}^{j-1}w(\ell_k^i+\ell_0)\times \prod_{i=j+1}^{\nn}w(\ell_k^{i}+\ell_0)}\nonumber\\
&\le&\sum_{\ell_k^j=0}^{\min(\ell_k^3, k-\ell_k^3-s)}   \,\,\,\sum_{(h^1,\ldots, h^{\nn-4})\in {\mathbb{N}}^{\nn-4}: 0\le h^{\nn-4}\le\ldots\le h^1\le\ell_k^3\atop \sum_{i=1}^{\nn-4}h^i={k}-\ell_k^3-s-\ell_k^j} \frac{1}{\prod_{i=1}^{\nn-4}w(h^i+\ell_0)}\nonumber\\
&\le&\sum_{\ell_k^j=0}^{\min(\ell_k^3, k-\ell_k^3-s)} Q_{\nn-4}({k}-\ell_k^3-s-\ell_k^j;\ell_0;\infty).
\end{eqnarray}
To get the first inequality in (\ref{sum2vbr2}), we summed first after $\ell_k^j$ and then after the remaining values.  For the second inequality we removed the restriction on $(\ell_k^{i-1},\ell_k^{i+1})$ that $\ell_k^{j+1}\le\ell_k^j\le\ell_k^{j-1}$, and for the last inequality we used the definition of $Q_{\nn-4}$ from (\ref{eqndefq}).  

Combining (\ref{sum2vbach}), (\ref{sum2vbr1}) and (\ref{sum2vbr2}), we get
\begin{eqnarray}
\label{sum2vb}
\lefteqn{\sum_{(\ell_k^1,\ell_k^2,\ell_k^4,\ldots, \ell_k^\nn)\in {\cal{L}}_{\nn,k}(\ell_k^3)}
\frac{1}{w(\ell_k^3+\ell_0)w(\ell_k^1+\ell_0)}\sum_{j=4}^{\nn}\frac{1}{\prod_{i=4, i\neq j}^{\nn} w\left(\ell_k^i+\ell_0\right)}}\nonumber\\
&\le&\frac{1}{w(\ell_k^3+\ell_0)}\bigg(\sum_{\ell_k^1=\max([{k}/\nn],\ell_k^3)}^{{k}-2\ell_k^3}\frac{1}{w(\ell_k^1+\ell_0)}\bigg)\sum_{s=2\ell_k^3}^{{k}-\ell_k^3}\,\,\bigg[ \sum_{\ell_k^4=0}^{\min(\ell_k^3, k-\ell_k^3-s)}Q_{\nn-4}({k}-s-\ell_k^3-\ell_k^4;\ell_0;\infty)\nonumber\\
&&+\sum_{j=5}^\nn \sum_{\ell_k^j=0}^{\min(\ell_k^3, k-\ell_k^3-s)} Q_{\nn-4}({k}-\ell_k^3-s-\ell_k^j;\ell_0;\infty)\bigg],
\end{eqnarray}
where for $\nn=4$, the sums over the $Q_{\nn-4}$ terms do not appear in the formula. Even though we could now easily bound (\ref{sum2vb}) under assumption (\ref{maxfinv}) by using (\ref{rec2}), we will next estimate the sums in (\ref{sum2vb}) under the weaker condition (\ref{Azerov}) in such a way that we can re-use the bounds later on in the proof of Lemma \ref{specialcase} (a).
For the first sum, we have for $0\le\ell_k^3\le [k/4]$ (which is equivalent to $2\ell_k^3\le k-2\ell_k^3$)
\begin{eqnarray}
\label{2vba}
\lefteqn{\sum_{s=2\ell_k^3}^{{k}-\ell_k^3}\,\,\sum_{\ell_k^4=0}^{\min(\ell_k^3, k-\ell_k^3-s)}Q_{\nn-4}({k}-s-\ell_k^3-\ell_k^4;\ell_0;\infty)}\nonumber\\
&\le&\sum_{s=2\ell_k^3}^{{k}-2\ell_k^3}\,\,\sum_{\ell_k^4=0}^{\ell_k^3}Q_{\nn-4}({k}-s-\ell_k^3-\ell_k^4;\ell_0;\infty)+\sum_{s=k-2\ell_k^3}^{k-\ell_k^3}\sum_{\ell_k^4=0}^{k-\ell_k^3-s}Q_{\nn-4}({k}-s-\ell_k^3-\ell_k^4;\ell_0;\infty)\nonumber\\
&\le&\sum_{\ell_k^4=0}^{\ell_k^3}\sum_{s=2\ell_k^3}^{{k}-2\ell_k^3}\,\,Q_{\nn-4}({k}-s-\ell_k^3-\ell_k^4;\ell_0;\infty)+\sum_{\ell_k^4=0}^{\ell_k^3}\,\,\,\sum_{s=k-2\ell_k^3}^{{k}-\ell_k^3-\ell_k^4}\,\,Q_{\nn-4}({k}-s-\ell_k^3-\ell_k^4;\ell_0;\infty)\nonumber\\
&\le& 2\sum_{\ell_k^4=0}^{\ell_k^3}(c(\ell_0))^{\nn-4}=2(c(\ell_0))^{\nn-4}(\ell_k^3+1),
\end{eqnarray}
where for the first equality in (\ref{2vba}) we used that 
\begin{eqnarray}
\label{minineq}
\min(\ell_k^3, k-\ell_k^3-s)&=&\bigg\{\begin{array}{cc}
\ell_k^3,&\mbox{if}~~~2\ell_k^3\le s\le k-2\ell_k^3\\
k-\ell_k^3-s,&~~~~~\mbox{if}~~~k-2\ell_k^3\le s\le k-\ell_k^3.
\end{array}
\end{eqnarray}
For the first inequality we changed the summation order between $s$ and $\ell_k^4$, and we used in the second double sum that $k-\ell_k^3-s\le\ell_k^3$, if $k-2\ell_k^3\le s\le k-\ell_k^3$; for the second inequality we used (\ref{rec2}). A similar inequality to (\ref{2vba}) holds for $[k/4]\le\ell_k^3\le [k/3]$, the main difference being that now only the sum over $k-2\ell_k^3\le s\le k-\ell_k^3$ is possible. An inequality of the same form holds also for the second sum in (\ref{sum2vb}).
From (\ref{sum2vbach}), (\ref{sum2vb}) and (\ref{2vba}), we get that
\begin{multline}
\label{2vbc}
\sum_{(\ell_k^1,\ell_k^2,\ell_k^4,\ldots, \ell_k^\nn)\in {\mathbb{N}}^{\nn-1}: \,0\le \ell_k^\nn\le\ldots\le\ell_k^3\le\ell_k^2\le\ell_k^1\atop \sum_{i=1, i\neq 3}^{\nn}\ell_k^i={k}-\ell_k^3}
\frac{1}{w(\ell_k^3+\ell_0)w(\ell_k^1+\ell_0)}\sum_{j=4}^{\nn}\frac{1}{\prod_{i=4, i\neq j}^{\nn} w\left(\ell_k^i+\ell_0\right)}\\
\le \bar{c}(\ell_0,\nn)\frac{\ell_k^3}{w(\ell_k^3+\ell_0)} \bigg(\sum_{\ell_k^1=\max([{k}/\nn],\ell_k^3)}^{{k}-2\ell_k^3}\frac{1}{w(\ell_k^1+\ell_0)}\bigg),
\end{multline}
for some  $\bar{c}(\ell_0,\nn)>0$ which is independent of $\ell_k^i,i=1,\ldots,\nn$. A similar argument and bound as in (\ref{2vbc}) holds also for the third and for the fourth sums in (\ref{eqmainverta}) and will be omitted.

\textbf{Step 3:} We will estimate here the fifth sum in (\ref{eqmainverta}), in which $w(\ell_k^3+\ell_0)$ is missing. Similarly to Step 2, we have
\begin{eqnarray}
\label{sum3vb}
\lefteqn{\sum_{(\ell_k^1,\ell_k^2,\ell_k^4,\ldots, \ell_k^\nn)\in {\cal{L}}_{\nn,k}(\ell_k^3)}\frac{1}{w(\ell_k^1+\ell_0)w(\ell_k^2+\ell_0)}\sum_{j=4}^{\nn}\frac{1}{\prod_{i=4, i\neq j}^{\nn} w\left(\ell_k^i+\ell_0\right)}}\nonumber\\
&\le&\sum_{s=2\ell_k^3}^{{k}-\ell_k^3}\,\,\sum_{(\ell_k^1,\ell_k^2)\in {\cal{A}}_{\nn,k}(s,\ell_k^3)}\,\,\,\frac{1}{w(\ell_k^1+\ell_0)w(\ell_k^2+\ell_0)}\,\sum_{(\ell_k^4,\ldots, \ell_k^\nn)\in {\mathbb{N}}^{\nn-3}: \,0\le \ell_k^\nn\le\ldots\le\ell_k^3\atop \sum_{i=4}^{\nn}\ell_k^i={k}-\ell_k^3-s}\,\,\,\sum_{j=4}^{\nn}\frac{1}{\prod_{i=4\atop i\neq j}^{\nn} w\left(\ell_k^i+\ell_0\right)}\nonumber\\
&\le&\sum_{s=2\ell_k^3}^{{k}-\ell_k^3}\,\, \,\,\sum_{(\ell_k^1,\ell_k^2)\in {\cal{A}}_{\nn,k}(s,\ell_k^3)}\,\,\,\frac{1}{w(\ell_k^1+\ell_0)w(\ell_k^2+\ell_0)} \,\,\bigg[\sum_{\ell_k^4=0}^{\min(\ell_k^3, k-\ell_k^3-s)}Q_{\nn-4}({k}-s-\ell_k^3-\ell_k^4;\ell_0;\infty)\nonumber\\
&&+\sum_{j=5}^\nn\,\,\,\,\sum_{\ell_k^j=0}^{\min(\ell_k^3, k-\ell_k^3-s)} Q_{\nn-4}( {k}-\ell_k^3-s-\ell_k^j;\ell_0;\infty) \bigg]\nonumber\\
&\le&(\nn-4)(c(\ell_0))^{\nn-4}\sum_{s=2\ell_k^3}^{{k}-\ell_k^3}\,\, \,\,\sum_{(\ell_k^1,\ell_k^2)\in {\cal{A}}_{\nn,k}(s,\ell_k^3)}\,\,\,\frac{1}{w(\ell_k^1+\ell_0)w(\ell_k^2+\ell_0)} ,
\end{eqnarray}
where for $\nn=4$, the sums over the $Q_{\nn-4}$ terms do not appear in the formula. We summed for the first inequality in (\ref{sum3vb}), first after all the values that $(\ell_k^1,\ell_k^2)$ can take, and then after all the values that the remaining $(\ell_k^4,\ldots,\ell_k^{\nn})$ can take, given $(\ell_k^1,\ell_k^2)$. For the second inequality, we used (\ref{sum2vbr1}) and (\ref{sum2vbr2}) to further bound the sum over the $(\ell_k^4,\ldots,\ell_k^{\nn})$ values, and for the last inequality we used (\ref{rec2}).

It remains to estimate the double sum in the last inequality in (\ref{sum3vb}). We have
\begin{eqnarray}
\label{3vba}
\lefteqn{\sum_{s=2\ell_k^3}^{{k}-\ell_k^3}\,\,\,\sum_{(\ell_k^1,\ell_k^2)\in {\cal{A}}_{\nn,k}(s,\ell_k^3)}\,\,\,\frac{1}{w(\ell_k^1+\ell_0)w(\ell_k^2+\ell_0)}}\nonumber\\
&\le&\sum_{\ell_k^1=\max([{k}/\nn],\ell_k^3)}^{{k}-2\ell_k^3}\,\sum_{s=\ell_k^1+(k-1)/2(\nn-1)}^{{k}-\ell_k^3}\frac{1}{w(\ell_k^1+\ell_0)w(s-\ell_k^1+\ell_0)},
\end{eqnarray}
where in the above we recalled (\ref{simpk}) and changed the summation order between $s$ and $\ell_k^1$, and we used that $s=\ell_k^1+\ell_k^2\ge \ell_k^1+{(k-1)}/2(\nn-1)$.
From (\ref{sum3vb}) and (\ref{3vba}), we therefore get for some $\tilde{c}(\ell_0,\nn)>0$ that
\begin{multline}
\label{3vbc}
\sum_{(\ell_k^1,\ell_k^2,\ell_k^4,\ldots, \ell_k^\nn)\in {\cal{L}}_{\nn,k}(\ell_k^3)}\frac{1}{w(\ell_k^1+\ell_0)w(\ell_k^2+\ell_0)}\sum_{j=4}^{\nn}\frac{1}{\prod_{i=4, i\neq j}^{\nn} w\left(\ell_k^i+\ell_0\right)}\\
\le \tilde{c}(\ell_0,\nn) \sum_{\ell_k^1=\max([{k}/\nn],\ell_k^3)}^{{k}-2\ell_k^3} \frac{1}{w(\ell_k^1+\ell_0)}\sum_{s=\ell_k^1+{(k-1)}/2(\nn-1)}^{{k}-\ell_k^3}\,\, \frac{1}{w(s-\ell_k^1+\ell_0)}.
\end{multline}
\vspace{1.5mm}
The statement of the theorem follows now by combining the estimates from Steps 1-3, and by bounding the inner sum in (\ref{3vbc}) by means of (\ref{Azerov}). 

\vspace{0.5mm}

\end{proof}

\noindent\textbf{Proof of Theorem \ref{mainvertfin}}

We will show next that $R_\infty^3<\infty$ a.s., which will imply the statement of the theorem. For simplicity and clarity of computations, we will again restrict ourselves to the case with $\ell_0^v\equiv\ell_0\in\R^+,$ $v\in V(\GG)$, the case with general initial weights following by similar arguments by means of (\ref{EPfingraph3v}) and of a generalization to Proposition \ref{2orderstatv}. Under assumption (\ref{maxfinv}), and taking $p_\ell:=(\ell+\ell_0) /w(\ell+\ell_0)$ in Lemma \ref{gexistence}, there exists $g\colon \bN \rightarrow \left[0, \infty\right)$ such that (i) $g\left(\cdot\right)$ is increasing, $\textstyle \lim_{\ell \uparrow \infty}g\left(\ell\right)=\infty$ and (ii) $\sum _{\ell= 1}^{\infty}\ell g\left(\ell+\ell_0\right)/w(\ell+\ell_0) < \infty$. Since g is an increasing function, we have by the monotone convergence theorem that
\begin{equation}
\label{ineq1v}
{\mathbb{E}}^{\GG}(g(R_\infty^3))=\lim_{k\rightarrow\infty}{\mathbb{E}}^{\GG}(g(R_k^3))=\lim_{k\rightarrow\infty}\sum_{\ell_k^3=0}^{[{k}/3]} g(\ell_k^3){\mathbb{P}}^{\GG}(R_k^3=\ell_k^3).
\end{equation}
Then in view of Proposition \ref{2orderstatv}, (\ref{ineq1v}) becomes
\begin{eqnarray}
\label{ineq1va}
{\mathbb{E}}^{\GG}(g(R_\infty^3))&\le& C(w,\nn, \ell_0)\cdot\lim_{k\rightarrow\infty}\sum_{\ell_k^3=0}^{[{k}/3]}g(\ell_k^3)\bigg[\frac{\ell_k^3}{w(\ell_k^3+\ell_0)}+\sum_{i=\ell_k^3}^{{k}-\ell_k^3}\frac{1}{w(i+\ell_0)}\bigg].
\end{eqnarray}
By the hypothesis on $g$ and $w$, we have
$$\sum_{\ell_k^3=0}^{\infty}\frac{\ell_k^3\,g(\ell_k^3+\ell_0)}{w(\ell_k^3+\ell_0)}<M.$$
 It remains to bound the double sum term in (\ref{ineq1va}). In view of $\ell_k^3\le i\le{k}-\ell_k^3$, we have for $0\le\ell_k^3\le [{k}/3]$
\begin{eqnarray}
\label{3rdtermv}
\sum_{\ell_k^3=0}^{[{k}/3]}g(\ell_k^3)\sum_{i=\ell_k^3}^{{k}-\ell_k^3}\frac{1}{w(i+\ell_0)}&\le&\sum_{i=0}^{{k}}\sum_{\ell_k^3=0}^{i}\frac{g(\ell_k^3)}{w(i+\ell_0)}\le \sum_{i=0}^{{k}}\sum_{\ell_k^3=0}^{i}\frac{g(i+\ell_0)}{w(i+\ell_0)}\nonumber\\
&\le&\sum_{i=0}^{{k}}\frac{(i+1)g(i+\ell_0)}{w(i+\ell_0)}<2M+g(\ell_0)/w(\ell_0),
\end{eqnarray}
where we used in the above the monotonicity of $g$ and the hypothesis assumption on $w$. From (\ref{ineq1va}) and (\ref{3rdtermv}), it follows that ${\mathbb{E}}^{\GG}(g(R_\infty^3))<\infty$ which implies that a.s. $g(R_\infty^3)<\infty$. Combining this with the fact that $g$ is increasing and $\lim_{\ell\uparrow\infty} g(\ell)=\infty$, we get that a.s. $R_\infty^3<\infty$.
\qed

\begin{remark}
\label{problem}
\begin{itemize}
\item [(a)] Just as for ERRW, we can easily extend the result in Theorem \ref{mainvertfin}, by the same reasoning as in Theorem \ref{mainvertfin}, to the case where each vertex $v\in V(\GG)$ has its own weight function $w_v$ satisfying
$$\max_{v\in V(\GG)}\sum_{\ell=1}^\infty\frac{\ell}{w_v(\ell+\ell_0^v)}<\infty,\forall v\in V(\GG).$$
\item [(b)] We can also apply our method to the case when we add one loop to
each site, i.e., when at each step, independently of the actual position
of the walk, the probability to jump to some site $i$ is proportional to $w(X_n(i))$. Then the order statistics bounds are similar to the ones for ERRW (i.e., there is one more term in each product compared to the bounds for VRRW in Proposition \ref{vertgen}). This implies that the walk gets attracted to 2 vertices a.s under (\ref{Azerov}).
\item [(c)] Our method currently breaks down when $w$ does not satisfy (\ref{maxfinv}), but it still satisfies (\ref{Azerov}). As explained in Remark \ref{conjecture} c), we do not expect Theorem \ref{mainvertfin} to hold then for all finite/infinite graphs with vertex-reinforcement satisfying (\ref{Azerov}). However, we are also at present unable to prove the conjecture stated in Remark \ref{conjecture} c) as we cannot compute tight upper bounds for the distribution of $R_k^j, j\ge 4,$ when (\ref{maxfinv}) does not hold. The reason is that, unless we take the geometry of the graph into account, the corresponding bound for $R_k^j, j\ge 4,$ from Step 1 in Proposition \ref{2orderstatv} would still contain a term depending on $\ell_k^3$. This implies in particular that, if (\ref{maxfinv}) does not hold, our bound from Step 1 would blow up when $k$ grows large. 
\end{itemize} 
\end{remark}
In the case of bipartite graphs, where we can decouple nearest-neighbour vertices, we can bypass the issues arising in Step 1 of Proposition \ref{2orderstatv} for super-linear $w$ not satisfying  (\ref{maxfinv}), and we can show
\begin{Lemma}
\label{specialcase}
Let $\Lambda\subset{\mathbb {Z}}^d$ be a finite connected subset in ${\mathbb{Z}}^d$, or more generally any finite bipartite graph, with $|\Lambda|=\nn\ge 4$. If $w$ satisfies either: 
\begin{itemize}
\item [(a)] \begin{equation}
\label{supinfv1'}
\max_{v\in V(\GG)}\sum _{i\geq 1}^{\infty}\frac{i^{1/2}}{w(i+\ell_0^v)} < \infty,~~\mbox{or}
\end{equation}
\item [(b)] $w$ satisfies (\ref{Azerov}) and 
\begin{equation}
\label{bipbip}
\max_{v\in V(\GG)} \sup_{i\ge 1}\frac{i}{w(i+\ell_0^v)}<\infty,
\end{equation}
\end{itemize}
then the walk traverses exactly two random neighbouring attracting vertices at all large times a.s..
\end{Lemma}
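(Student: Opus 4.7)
I will adopt the following high-level approach, modelled on the proofs of Theorem \ref{mainvertfin} and Theorem \ref{maininfedge} but using the bipartite structure to relax the hypothesis on $w$. Write $V(\GG)=A\sqcup B$ for the two colour classes and recall that the walk alternates between them, so that $\sum_{v\in A}(X_k^v-\ell_0^v)=m_A$ and $\sum_{v\in B}(X_k^v-\ell_0^v)=m_B$ with $m_A,m_B\in\{\lfloor k/2\rfloor,\lceil k/2\rceil\}$. Let $\tilde{R}_k^{A,j}$ (resp.\ $\tilde{R}_k^{B,j}$) denote the $j$-th largest visit count among the vertices of $A$ (resp.\ $B$). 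Because the walk never stops moving, at least two distinct vertices are visited infinitely often; and the event $\{\tilde{R}_\infty^{A,2}<\infty\}\cap\{\tilde{R}_\infty^{B,2}<\infty\}$ forces at most one vertex per class to be visited infinitely often, so that $|\GG_\infty|=2$ with these two vertices necessarily adjacent. Hence it suffices to show $\tilde{R}_\infty^{A,2}<\infty$ almost surely (the argument for $B$ is symmetric), which by the strategy of Theorem \ref{mainvertfin} reduces to finding an increasing $g\uparrow\infty$ with ${\mathbb{E}}^{\GG}[g(\tilde{R}_\infty^{A,2})]<\infty$.

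The key new ingredient is a bipartite analogue of Proposition \ref{2orderstatv} for ${\mathbb{P}}^{\GG}(\tilde{R}_k^{A,2}=\ell)$. Starting from Proposition \ref{Pfingraph1v} and summing only over $v'$ in the colour class containing $I_k$ (uniquely determined by the parity of $k$), the denominator $\prod_{v\neq v'}w(\ell_k^v+\ell_0)$ splits as $\prod_{v\in A\setminus\{v'\}}w(\cdot)\cdot\prod_{v\in B}w(\cdot)$ when $v'\in A$, and expanding the numerator $\sum_{u\sim v'}w(\ell_k^u+\ell_0)$ picks up a single weight in $B$; the result is a product missing exactly one weight per colour class. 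The two resulting sums are coupled only through the totals $m_A,m_B$ and therefore essentially decouple. Within the $A$-sum the constraint $\tilde{\ell}^{A,1}+\ell+\sum_{j\ge3}\tilde{\ell}^{A,j}=m_A$ determines $\tilde{\ell}^{A,1}$ once the remaining $\tilde{\ell}^{A,j}$ are fixed (and similarly in $B$ once $\tilde{\ell}^{B,1}$ is the missing weight), which is precisely what removes the combinatorial factor $\ell_k^3$ from Step~1 of Proposition \ref{2orderstatv} that forces it to use assumption (\ref{maxfinv}). The resulting bound takes the shape
\[
{\mathbb{P}}^{\GG}(\tilde{R}_k^{A,2}=\ell)\ \le\ C\Bigl[\tfrac{f(\ell)}{w(\ell+\ell_0)}+\sum_{i=\ell}^{m_A-\ell}\tfrac{1}{w(i+\ell_0)}\,Q_{n_A-2}(m_A-\ell-i;\ell_0;\infty)\Bigr],
\]
with $f(\ell)=1$ under hypothesis (b) (where the uniform bound $\ell/w(\ell+\ell_0)\le M$ absorbs any residual $\ell$-factor arising from cases in which $v'$ is not the $A$-top) and $f(\ell)=\ell^{1/2}$ under hypothesis (a).

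The remainder of the proof follows the pattern of Theorem \ref{mainvertfin}. Apply Lemma \ref{gexistence} with $p_\ell=f(\ell)/w(\ell+\ell_0)$, which is summable under the corresponding hypothesis, to produce an increasing $g$ with $g(\ell)\uparrow\infty$ and $\sum_\ell g(\ell)f(\ell)/w(\ell+\ell_0)<\infty$. The tail term is handled exactly as in equation (\ref{ineq3}) from the proof of Theorem \ref{mainedgefin}: swap the order of summation, use the monotonicity of $g$, and apply (\ref{rec2}) to conclude that this term is bounded by $C\sum_i g(i+\ell_0)/w(i+\ell_0)<\infty$ under (\ref{Azerov}). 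Together these give ${\mathbb{E}}^{\GG}[g(\tilde{R}_\infty^{A,2})]<\infty$, hence $\tilde{R}_\infty^{A,2}<\infty$ a.s.; the symmetric argument for $B$ completes the proof. The main obstacle is the bookkeeping required to verify, once the $A$-$B$ decoupling is in place, that the Step~1 pre-factor can indeed be reduced to $\ell^{1/2}$ in case (a) (respectively $O(1)$ in case (b)): one must enumerate the combinations of ``missing'' weights across the two classes and estimate each via suitable $Q_{n_A-2}$ and $Q_{n_B-1}$ bounds, using (\ref{rec2}) together with the corresponding hypothesis on $w$.
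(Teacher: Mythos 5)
The paper's own proof of Lemma~\ref{specialcase} keeps the \emph{global} order statistic $R_k^3$ and bounds ${\mathbb{P}}^{\GG}(R_k^3=\ell)$ by partitioning according to which colour class $U_1,U_2$ contains the two globally largest counts. The bipartite structure enters through the constraint (\ref{bipartite}) ($\sum_{U_i}(X_k^v-\ell_0)\approx k/2$), and the hard case is the one where both $R_k^1,R_k^2$ lie in the same class (\ref{bugger2})--(\ref{sum1vpb2}); the $y^1$-factor there, bounded by $\ell_k^3$, is precisely what forces hypotheses (a) or (b). Your proposal is a genuinely different route: you replace the global $R_k^3$ by the \emph{class-wise} second-order statistics $\tilde R_k^{A,2}$, $\tilde R_k^{B,2}$, and argue that $\tilde R_\infty^{A,2}<\infty$ and $\tilde R_\infty^{B,2}<\infty$ together force exactly one infinitely-visited vertex per colour class, hence $|\GG_\infty|=2$ with the two vertices adjacent. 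That reduction is correct. It also has the nice feature of avoiding the bookkeeping the paper must do to track in which class $R_k^1$ and $R_k^2$ live.

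That said, the central estimate is asserted rather than proved, and that is where the whole difficulty of the lemma lives, as you yourself flag in the last sentence. Two specific things require care. First, Proposition~\ref{Pfingraph1v}, after summing over $v'$, gives $C\sum_{a\sim b}\bigl(\prod_{v\notin\{a,b\}}w(\ell_k^v+\ell_0)\bigr)^{-1}$ with $a\in A$, $b\in B$ adjacent; the missing $A$-vertex $a$ is not, in general, the $A$-top, and you must enumerate (i) $a$ is the $A$-top, (ii) $a$ is the $A$-second (count $=\ell$), (iii) $a$ is one of the smaller $A$-vertices, each with the constraint $\sum_A=m_A$. Second, and more importantly, when one actually carries out this enumeration, the $B$-sum is $O(1)$ by (\ref{rec2}), and all three $A$-cases appear to contribute either $O(1/w(\ell+\ell_0))$ or the tail $\sum_{i\ge\ell}w(i+\ell_0)^{-1}Q_{n_A-2}(\cdot)$, both of which are summable against an increasing $g\uparrow\infty$ under (\ref{Azerov}) alone, \emph{without} invoking (\ref{supinfv1'}) or (\ref{bipbip}). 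You hedge by putting $f(\ell)=\ell^{1/2}$ (resp.\ $f(\ell)=1$ ``absorbed by the $\sup$ bound''), but you never exhibit where such a factor enters. So either your approach actually proves the statement under the weaker (\ref{Azerov}) alone — which would be a strictly stronger result than the lemma and should be checked very carefully against examples of non-monotone $w$ with $\sup_i i/w(i)=\infty$ — or there is a residual $\ell$-dependent combinatorial factor that your sketch has missed and that reinstates hypotheses (a)/(b). Until the displayed bound on ${\mathbb{P}}^{\GG}(\tilde R_k^{A,2}=\ell)$ is actually derived, with the role of (a) or (b) pinpointed, the proposal is not a proof.
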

\begin{proof}
We will start by obtaining upper bounds on ${\mathbb{P}}^{\GG}(R_k^{3}=\ell_k^{3}), 0\le\ell_k^3\le [k/3]$, as in Proposition \ref{2orderstatv} above; thus, we need to bound the five terms in (\ref{eqmainverta}). 
With the upper bounds in place, the proof will then proceed by the same arguments as in the proof of Theorem \ref{mainvertfin}. To get the upper bounds, we will first need to work out the possible values of $R_k$ in the special case of bipartite graphs. We note here that we have a shorter proof for (a) only, based on the key observation that in bipartite graphs, for each fixed $R_k^1+R_k^2$ the number of  $(R_k^1,R_k^2)$ pairs is less or equal than $1+(\nn-3)R_k^4$. (See the proof of Lemma \ref{triangle-free case} below for this argument in the more general case of triangle-free graphs). However, we have chosen to provide below a unifying proof for both (a) and (b). 

As in the proof of Proposition \ref{2orderstatv}, we will consider in detail in all our calculations below only the case when all vertices have \textit{equal} initial weights $\ell_0^v\equiv\ell_0\in\R^+$.

\noindent  
Recall that the total number of arrivals to any vertex $v\in V(\Lambda)$ by time $k$ is the same as the total number of departures from $v$ by time $k$, the exceptions being the initial vertex $v_0$ and the vertex at time $k$, when they may differ by 1. Since $\Lambda$ is a bipartite graph, we can divide $V(\Lambda)$ into two disjoint sets $U_1$ and $U_2$, such that every edge connects a vertex in $U_1$ to one in $U_2$; moreover, there are no edges connecting vertices in $U_1$, respectively in $U_2$. Then we can decouple $U_1$ and $U_2$ by
\begin{equation}
\label{bipartite}
\bigg|\sum_{v\in U_1} (X_k^v-\ell_0)-\sum_{v\in U_2} (X_k^v-\ell_0)\bigg|\le 2~~\mbox{and}~~[k/2]-1\le \sum_{v\in U_i} (X_k^v-\ell_0)\le [k/2]+1, i=1,2,
\end{equation}
where to get the second inequality in (\ref{bipartite}) we used the first inequality and $\sum_{v\in V(\Lambda)} (X_k^v-\ell_0)=k$. 

\vspace{0.5mm}

In view of (\ref{bipartite}), we consider next the possible values of the vector $R_k$. By abuse of notation, we will denote by ${\cal{L}}_{\nn,k}(\ell_k^3)$ the set of all possible values ${\vec{\ell}}_k=(\ell_k^1,\ldots,\ell_k^\nn)\in {\mathbb{N}}^{\nn}, 0\le\ell_k^\nn\le\ldots\le\ell_k^1,$ of $R_k$ when we \textit{fix} $R_k^3=\ell_k^3$. 
For $a,b,c, m\in\mathbb{N}$, we denote the possibly empty set
\begin{equation}
\label{bug}
{\cal{D}}_{m}^c(k;a;b):=\{{\vec{h}}=(h^1,\ldots, h^m)\in {\mathbb{N}}^m: 0\le h^m\le\ldots\le h^1\le c, [k/2]+a\le \sum_{i=1}^m h^i\le [k/2]+b\}.
\end{equation}
By (\ref{bipartite}), for all $(\ell_k^1,\ldots,\ell_k^\nn)\in {\cal{L}}_{\nn,k}(\ell_k^3)$ there exists $(\ell_k^{j_1},\ldots, \ell_k^{{j_{|U_1|}}})\in {\mathbb{N}}^{|U_1|}$, $0\le\ell_k^{j_{|U_1|}}\le\ldots\le\ell_k^{j_1}$, with
\begin{equation}
\label{hahaha}
[k/2]-1\le \sum_{j=1}^{|U_1|} \ell_k^{j_i}\le  [k/2]+1~~~\mbox{and}~~~[k/2]-1\le \sum_{j\in \{1,\ldots,\nn\}\setminus \{j_1,\ldots, j_{|U_1|}\}}^{\nn} \ell_k^{j_i}\le  [k/2]+1.
\end{equation}
Thus, we have $(\ell_k^{j_1},\ldots, \ell_k^{j_{|U_1|}})\in {\cal{D}}_{|U_1|}^\infty(k;-1; 1)$ and $({\vec{\ell}}_k^i)_{i\in I_{|U_2|}}\in  {\cal{D}}_{|U_2|}^\infty(k;-1; 1)$, where for any vectors ${\vec{h}}_1\in \mathbb{R}^{d_1},\ldots,{\vec{h}}_r\in\mathbb{R}^{d_r}, r\ge 1$, we denote by $({\vec{h}}_1,\ldots,{\vec{h}}_r)$ the vector formed by the combined components of all the vectors re-arranged in non-increasing order. 
From (\ref{hahaha}), we have
\begin{multline}
\label{hahaha2}
 {\cal{L}}_{\nn,k}(\ell_k^3)\subseteq \{(\vec{h},\vec{s}): \vec{h}\in {\cal{D}}_{|U_1|}^\infty(k;-1; 1), \vec{s}\in {\cal{D}}_{|U_2|}^\infty(k;-1; 1),\\
 ~\ell_k^3~\mbox{is a component either of}~\vec{h}~\mbox{or of}~\vec{s}\}=:{\cal{C}}_{|U_1|,|U_2|}(k,\ell_k^3).
\end{multline}

\vspace{1mm}

(a)  We will estimate the five terms in (\ref{eqmainverta}). The tricky term is the one from Step 1 in Proposition \ref{2orderstatv}, as the bounds obtained in Steps 2 and 3 therein are already tight enough, as we will see below. 

\textbf{Step 1:} We need to bound
\begin{equation}
\label{sum1vbpah}
\sum_{(\ell_k^1,\ell_k^2,\ell_k^3,\ldots, \ell_k^\nn)\in {\cal{L}}_{\nn,k}(\ell_k^3)}\frac{1}
{\prod_{i=3}^{\nn} w\left(\ell_k^i+\ell_0\right)}.
\end{equation}
In the above, we used our new definition of ${\cal{L}}_{\nn,k}(\ell_k^3)$, which includes $\ell_k^3$ in the summation, as this will not change the sum due to $\ell_k^3$ being fixed; however, in view of our notations above, this trick will allow us to take advantage of (\ref{bipartite}) and decouple the products over vertices in $U_1$ from those over $U_2$. To expand (\ref{sum1vbpah}) by means of (\ref{hahaha2}), we need to keep track of whether $\ell_k^1$ and $\ell_k^2$ (i.e. the two largest components in $(\vec{h},\vec{s})$) belong to vectors in ${\cal{D}}_{|U_1|}^\infty(k;-1; 1)$ or in ${\cal{D}}_{|U_2|}^\infty(k;-1; 1)$, as this will determine which components of $\vec{h}$ and $\vec{s}$ are missing in the products below.  Therefore, we have
\begin{multline}
\label{sum1vbpaha}
\sum_{(\ell_k^1,\ell_k^2,\ell_k^3,\ldots, \ell_k^\nn)\in {\cal{L}}_{\nn,k}(\ell_k^3)}\frac{1}
{\prod_{i=3}^{\nn} w\left(\ell_k^i+\ell_0\right)}\\
\le\sum_{(\vec{h},\vec{s})\in {\cal{C}}_{|U_1|,|U_2|}(k,\ell_k^3)\atop h^2\le s^1, s^2\le h^1}\frac{1}{\prod_{i=2}^{|U_1|} w\left(h^i+\ell_0\right)\prod_{j=2}^{|U_2|} w\left(s^j+\ell_0\right)}+\sum_{(\vec{h},\vec{s})\in {\cal{C}}_{|U_1|,|U_2|}(k,\ell_k^3)\atop s^1\le h^3}\frac{1}{\prod_{i=3}^{|U_1|} w\left(h^i+\ell_0\right)}\\
\times\frac{1}{\prod_{j=1}^{|U_2|} w\left(s^j+\ell_0\right)}
+\sum_{(\vec{h},\vec{s})\in {\cal{C}}_{|U_1|,|U_2|}(k,\ell_k^3)\atop h^1\le s^3}\frac{1}{\prod_{i=1}^{|U_1|} w\left(h^i+\ell_0\right)\prod_{j=3}^{|U_2|} w\left(s^j+\ell_0\right)}.
\end{multline}
The first term in (\ref{sum1vbpaha}) corresponds to the case when ${\cal{D}}_{|U_1|}^\infty(k;-1;1)$ and $ {\cal{D}}_{|U_2|}^\infty(k;-1;1)$ each  contains only one of the two largest components in $(\vec{h},\vec{s})$, the second term corresponds to the case when ${\cal{D}}_{|U_1|}^\infty(k;-1;1)$ contains both of the two largest components in $(\vec{h},\vec{s})$, and the third term corresponds to the case when ${\cal{D}}_{|U_2|}^\infty(k;-1;1)$ contains both of the two largest components in $(\vec{h},\vec{s})$.

To simplify computations, we will assume next that $\min(|U_1|, |U_2|)\ge 3$. If $\min(|U_1|,|U_2|)=1$, $\Lambda$ is a star graph, so $R_k^1=\sum_{i=2}^\nn R_k^i$ and (\ref{sum1vbpaha}) reduces to the first sum. If $\min(|U_1|, |U_2|)= 2$, either the second sum in (\ref{sum1vbpaha}) contains only the product over $|U_2|$ terms or the third sum contains only the product over $|U_1|$ terms, which can be dealt with similarly to the more general situation below in (\ref{bugger2}).

By definition, for any $(\vec{h},\vec{s})\in {\cal{C}}_{|U_1|,|U_2|}(k,\ell_k^3)$, the coordinate $\ell_k^3$ (i.e., the third largest component in $(\vec{h},\vec{s})$) is a component either in $\vec{h}\in {\cal{D}}_{|U_1|}^\infty(k;-1;1)$, or in $\vec{s}\in {\cal{D}}_{|U_2|}^\infty(k;-1;1)$. For the first term in (\ref{sum1vbpaha}), this reduces to checking whether $h^2=\ell_k^3$ or $s^2=\ell_k^3$. Thus, the first term in (\ref{sum1vbpaha}) becomes
\begin{eqnarray}
\label{bugger}
\lefteqn{\sum_{(\vec{h},\vec{s})\in {\cal{C}}_{|U_1|,|U_2|}(k,\ell_k^3)\atop h^2\le s^1,s^2\le h^1}\frac{1}{\prod_{i=2}^{|U_1|} w\left(h^i+\ell_0\right)\prod_{j=2}^{|U_2|} w\left(s^j+\ell_0\right)}}\nonumber\\
&=&\!\!\!\!\sum_{(\vec{h},\vec{s})\in {\cal{C}}_{|U_1|,|U_2|}(k,\ell_k^3)\atop h^2\le s^1,s^2\le h^1, h^2=\ell_k^3}\!\frac{1}{\prod_{i=2}^{|U_1|} w\left(h^i+\ell_0\right)\!\prod_{j=2}^{|U_2|} w\left(s^j+\ell_0\right)}+\!\!\!\!\sum_{(\vec{h},\vec{s})\in {\cal{C}}_{|U_1|,|U_2|}(k,\ell_k^3)\atop h^2\le s^1,s^2\le h^1, s^2=\ell_k^3}\!\frac{1}{\prod_{i=2}^{|U_1|} w\left(h^i+\ell_0\right)\!\prod_{j=2}^{|U_2|} w\left(s^j+\ell_0\right)}\nonumber\\
&\le&\frac{1}{w(\ell_k^3+\ell_0)}\bigg(\sum_{\vec{z}\in {\cal{D}}_{|U_1|-1}^{\infty}(k;-1-\ell_k^3; 1-\ell_k^3)\atop \vec{s}\in  {\cal{D}}_{|U_2|}^\infty(k;-1; 1)}\frac{1}{\prod_{i=2}^{|U_1|-1} w\left(z^i+\ell_0\right)\prod_{j=2}^{|U_2|} w\left(s^j+\ell_0\right)}\nonumber\\
&&\,\,\,\,\,\,\,\,\,\,\,\,\,\,\,\,\,\,\,+\sum_{\vec{h}\in {\cal{D}}_{|U_1|}^{\infty}(k;-1; 1)\atop \vec{y}\in  {\cal{D}}_{|U_2|-1}^\infty(k;-1-\ell_k^3; 1-\ell_k^3)}\frac{1}{\prod_{i=2}^{|U_1|} w\left(h^i+\ell_0\right)\prod_{j=2}^{|U_2|-1} w\left(y^j+\ell_0\right)}\bigg),
\end{eqnarray}
where for the inequality we removed the restriction that $h^2\le s^1, s^2\le h^1$; this will allow us in (\ref{bugger1}) to decouple the products with terms in ${\cal{D}}_{|U_1|-1}^{\infty}$ from those with terms in ${\cal{D}}_{|U_2|}^{\infty}$. We also used that for $\vec{h}\in {\cal{D}}_{|U_1|}^\infty(k;-1;1)$ with $h^2=\ell_k^3,$ the vector $\vec{z}:=(h^1,h^3,\ldots, h^{|U_1|-1}) \in {\cal{D}}_{|U_1|-1}^{\infty}(k;-1-\ell_k^3; 1-\ell_k^3)$; similarly,  for $\vec{s}\in {\cal{D}}_{|U_2|}^\infty(k;-1;1)$ with $s^2=\ell_k^3,$ the vector $\vec{y}:=(s^1,s^3,\ldots, s^{|U_2|-1}) \in {\cal{D}}_{|U_2|-1}^{\infty}(k;-1-\ell_k^3; 1-\ell_k^3)$. 
Expanding the first term in (\ref{bugger}) further, and in view of (\ref{bug}), we have for some $c(\ell_0,\nn)>0$
\begin{eqnarray}
\label{bugger1}
\lefteqn{\sum_{\vec{z}\in {\cal{D}}_{|U_1|-1}^{\infty}(k;-1-\ell_k^3; 1-\ell_k^3)\atop \vec{s}\in  {\cal{D}}_{|U_2|}^\infty(k;-1; 1)}\frac{1}{\prod_{i=2}^{|U_1|-1} w\left(z^i+\ell_0\right)\prod_{j=2}^{|U_2|} w\left(s^j+\ell_0\right)}}\nonumber\\
&=& \bigg(\sum_{\vec{z}\in {\cal{D}}_{|U_1|-1}^{\infty}(k;-1-\ell_k^3; 1-\ell_k^3)}\frac{1}{\prod_{i=2}^{|U_1|-1} w\left(z^i+\ell_0\right)}\bigg)\bigg(\sum_{\vec{s}\in  {\cal{D}}_{|U_2|}^\infty(k;-1; 1)}\frac{1}{\prod_{j=2}^{|U_2|} w\left(s^j+\ell_0\right)}\bigg)\nonumber\\
&\le&\bigg(\sum_{r=[k/2]-1-\ell_k^3}^{[k/2]+1-\ell_k^3}\sum_{(z^1,\ldots,z^{|U_1|-1}),\sum_{i} z^i=r\atop 0\le z^{|U_1|-1}\le\ldots\le z^1}\frac{1}{\prod_{i=2}^{|U_1|-1} w\left(z^i+\ell_0\right)}\bigg)\bigg(\sum_{r=[k/2]-1}^{[k/2]+1}\sum_{(s^1,\ldots,s^{|U_2|}),\sum_{j} s^j=r\atop 0\le s^{|U_2|}\le\ldots\le s^1}\frac{1}{\prod_{j=2}^{|U_2|} w\left(s^j+\ell_0\right)}\bigg)\nonumber\\
&\le& c(\ell_0,\nn),
\end{eqnarray}
where in each of the two products above the $w$ term coming from the largest component is missing (i.e., $w(z^1+\ell_0)$, respectively $w(s^1+\ell_0)$) and the summation is over all the terms (including the ones missing in the products), so the bound follows by the same arguments as in (\ref{yes1}). For the remaining term in (\ref{bugger}), we have by similar arguments
\begin{eqnarray}
\label{buggerbugger1}
\lefteqn{\sum_{\vec{h}\in {\cal{D}}_{|U_1|}^{\infty}(k;-1; 1)\atop \vec{y}\in  {\cal{D}}_{|U_2|-1}^\infty(k;-1-\ell_k^3; 1-\ell_k^3)}\frac{1}{\prod_{i=2}^{|U_1|} w\left(h^i+\ell_0\right)\prod_{j=2}^{|U_2|-1} w\left(y^j+\ell_0\right)}\bigg)} \nonumber\\
&\le&\bigg(\sum_{r=[k/2]-1}^{[k/2]+1}\sum_{(s^1,\ldots,s^{|U_1|}),\sum_{j} s^j=r\atop 0\le s^{|U_1|}\le\ldots\le s^1}\frac{1}{\prod_{j=2}^{|U_1|} w\left(s^j+\ell_0\right)}\bigg)\bigg(\sum_{r=[k/2]-1-\ell_k^3}^{[k/2]+1-\ell_k^3}\sum_{(z^1,\ldots,z^{|U_2|-1}),\sum_{i} z^i=r\atop 0\le z^{|U_2|-1}\le\ldots\le z^1}\frac{1}{\prod_{i=2}^{|U_2|-1} w\left(z^i+\ell_0\right)}\bigg)\nonumber\\
&\le& c'(\ell_0,\nn),
\end{eqnarray}
for some $c'(\ell_0,\nn)>0$. For the second term in (\ref{sum1vbpaha}), we have as in (\ref{bugger})
\begin{eqnarray}
\label{bugger2}
\lefteqn{\sum_{(\vec{h},\vec{s})\in {\cal{C}}_{|U_1|,|U_2|}(k,\ell_k^3)}\frac{1}{\prod_{i=3}^{|U_1|} w\left(h^i+\ell_0\right)\prod_{j=1}^{|U_2|} w\left(s^j+\ell_0\right)}}\nonumber\\
&\le&\frac{1}{w(\ell_k^3+\ell_0)}\bigg(\sum_{\vec{y}\in {\cal{D}}_{|U_2|-1}^{\infty}(k;-1-\ell_k^3; 1-\ell_k^3) \atop {\vec{h}\in  {\cal{D}}_{|U_1|}^{\ell_k^3}(k;-1; 1)}}\frac{1}{\prod_{i=3}^{|U_1|} w\left(h^i+\ell_0\right)\prod_{j=1}^{|U_2|-1} w\left(y^j+\ell_0\right)}\nonumber\\
&&\,\,\,\,\,\,\,\,\,\,\,\,\,\,\,\,\,\,\,+\sum_{\vec{z}\in {\cal{D}}_{|U_1|-1}^{\infty}(k;-1-\ell_k^3; 1-\ell_k^3)\atop {\vec{s}\in  {\cal{D}}_{|U_2|}^\infty(k;-1; 1)}}\frac{1}{\prod_{i=3}^{|U_1|-1} w\left(z^i+\ell_0\right)\prod_{j=1}^{|U_2|} w\left(s^j+\ell_0\right)}\bigg).\end{eqnarray}
We estimate next the first term in (\ref{bugger2}), the bounds for the second term following similarly. This requires a trick since there are two more terms in the summation over $h^i$ terms than in the corresponding product, so we cannot use  (\ref{yes1}). Since $\sum_{i=1}^{|U_2|-1} y^i\ge [k/2]-1-\ell_k^3\ge [k/2]-1-[k/3]$, we have $y^1\ge ([k/6]-1)/(|U_2|-1)\ge c(|U_2|) [k/3]  \ge c(|U_2|)h^1, c(|U_2|)>0$, with the last inequality due to $h^1\le\ell_k^3$. By decoupling now the terms in ${\cal{D}}_{|U_1|}^{\infty}$ from those in ${\cal{D}}_{|U_2|-1}^{\ell_k^3}$, we get 
\begin{eqnarray}
\label{sum1vpb2}
\lefteqn{\sum_{\vec{y}\in {\cal{D}}_{|U_2|-1}^{\infty}(k;-1-\ell_k^3; 1-\ell_k^3) \atop {\vec{h}\in  {\cal{D}}_{|U_1|}^{\ell_k^3}(k;-1; 1)}}\frac{1}{\prod_{j=1}^{|U_2|-1} w\left(y^j+\ell_0\right)}\times \frac{1}{\prod_{i=3}^{|U_1|} w\left(h^i+\ell_0\right)}}\nonumber\\
&\le&\bigg(\sum_{ \vec{y}\in {\cal{D}}_{|U_2|-1}^{\ell_k^3}(k;-1-\ell_k^3; 1-\ell_k^3) }\frac{(c(|U_2|))^{-1}y^1}{\prod_{j=1}^{|U_2|-1} w\left(y^j+\ell_0\right)}\bigg)\bigg(\sum_{r=[k/2]-1}^{[k/2]+1}\sum_{s=0}^r\sum_{(h^3,\ldots,h^{|U_1|}),\sum_{i=3}^{|U_1|} h^i=r-s\atop 0\le h^{|U_1|}\le\ldots\le h^3}\frac{1}{\prod_{i=3}^{|U_1|} w\left(h^i+\ell_0\right)}\bigg)\nonumber\\
&\le&\bigg(\sum_{r=[k/2]-1-\ell_k^3}^{[k/2]+1-\ell_k^3}\sum_{(y^1,\ldots,y^{|U_2|-1}),\sum_{j=1}^{|U_2|-1} y^j=r\atop 0\le y^{|U_2|-1}\le\ldots\le y^1\le\ell_k^3}\frac{(c(|U_2|))^{-1}y^1}{\prod_{j=1}^{|U_2|-1} w\left(y^j+\ell_0\right)}\bigg)\bigg(\sum_{r=[k/2]-1}^{[k/2]+1}\sum_{s=0}^rQ_{|U_1|-3}(r-s;\ell_0;\infty)\bigg)\nonumber\\
&\le& c(\ell_0, |U_2|)\bigg(\sum_{r=[k/2]-1-\ell_k^3}^{[k/2]+1-\ell_k^3}\,\,\,\,\sum_{y^1=0}^{\min(r,\ell_k^3)}\frac{y^1}{w(y^1+\ell_0)} Q_{|U_2|-2}(r-y^1;\ell_0;\infty)\bigg),
\end{eqnarray}
for some $c(\ell_0,|U_2|)>0$. To get the first inequality in (\ref{sum1vpb2}), we used in the second product that for each fixed $h^1+h^2=s$, there are less than $h^1\le(c(|U_2|))^{-1}y^1$ pairs $(h^1,h^2)$ to be summed over. For the second inequality, we expanded the first product in view of (\ref{bug}), and applied (\ref{eqndefq}) to the second product. For the third inequality, we used (\ref{eqndefq}) and that $y^1\le\min(r,\ell_k^3)$ in the inner sum of the first product to get a formula with respect to $y_1$ and $Q_{|U_2|-2}(r-y^1;\ell_0;\infty)$, and we utilized (\ref{rec2}) in the second product.

Similar estimates as in (\ref{bugger2}) and (\ref{sum1vpb2}) hold for the last term in (\ref{sum1vbpaha}). Therefore, by collecting all the estimates above, we have for some $c(\ell_0,|U_1|,|U_2|)>0$
\begin{eqnarray}
\label{sum1vbp1}
\lefteqn{\sum_{(\ell_k^1,\ell_k^2,\ell_k^3,\ldots, \ell_k^\nn)\in {\cal{L}}_{\nn,k}(\ell_k^3)}\frac{1}
{\prod_{i=3}^{\nn} w\left(\ell_k^i+\ell_0\right)}}\nonumber\\
&\le&\frac{c(\ell_0,|U_1|,|U_2|)}{w(\ell_k^3+\ell_0)}\bigg[\sum_{r=[k/2]-1-\ell_k^3}^{[k/2]+1-\ell_k^3}\sum_{y^1=0}^{\min(r,\ell_k^3)}\frac{y^1}{w(y^1+\ell_0)} \bigg(Q_{|U_2|-2}(r-y^1;\ell_0;\infty)+Q_{|U_1|-2}(r-y^1;\ell_0;\infty)\bigg)\nonumber\\
&&+\sum_{r=[k/2]-1}^{[k/2]+1}\sum_{y^1=0}^{\min(r,\ell_k^3)}\frac{y^1}{w(y^1+\ell_0)} \bigg(Q_{|U_2|-1}(r-y^1;\ell_0;\infty)+Q_{|U_1|-1}(r-y^1;\ell_0;\infty)\bigg)\bigg].
\end{eqnarray}
Up to this point, the proof has been common to both (a) and (b) as we have not used any of the specific assumptions on $w$ in each of the two cases. We note now that
\begin{eqnarray}
\label{sum1vb1}
\lefteqn{\frac{1}{w(\ell_k^3+\ell_0)}\sum_{y^1=0}^{\min(r,\ell_k^3)}\frac{y^1}{w(y^1+\ell_0)}Q_{|U_2|-2}(r-y^1;\ell_0;\infty)}\nonumber\\
&\le&\frac{(c(\ell_0))^{|U_2|-2}}{w(\ell_k^3+\ell_0)}\sum_{y^1=0}^{\ell_k^3}\frac{y^1}{w(y^1+\ell_0)}\le\frac{(c(\ell_0))^{|U_2|-2}(\ell_k^3)^{1/2}}{w(\ell_k^3+\ell_0)}\sum_{y^1=0}^{\ell_k^3}\frac{(y^1)^{1/2}}{w(y^1+\ell_0)}\le c'(\ell_0)\frac{(\ell_k^3)^{1/2}}{w(\ell_k^3+\ell_0)},
\end{eqnarray}
where $c'(\ell_0)>0$, for the first inequality we used (\ref{rec2}), and for the last inequality we used (\ref{supinfv1'}). We can reason likewise for the other terms in (\ref{sum1vbp1}) to get similar bounds.

\textbf{Step 2:} We have from (\ref{2vbc}) that, for some $\bar{c_1}(\ell_0,\nn), \tilde{c_1}(\ell_0,\nn)>0$, 
\begin{multline}
\label{2vbcr}
\sum_{(\ell_k^1,\ell_k^2,\ell_k^3,\ldots, \ell_k^\nn)\in {\cal{L}}_{\nn,k}(\ell_k^3)}
\frac{1}{w(\ell_k^3+\ell_0)w(\ell_k^1+\ell_0)}\sum_{j=4}^{\nn}\frac{1}{\prod_{i=4, i\neq j}^{\nn} w\left(\ell_k^i+\ell_0\right)}\\
\le \bar{c}(\ell_0,\nn)\frac{\ell_k^3}{w(\ell_k^3+\ell_0)} \bigg(\sum_{\ell_k^1=\max([{k}/\nn],\ell_k^3)}^{{k}-2\ell_k^3}\frac{1}{w(\ell_k^1+\ell_0)}\bigg)\le \tilde{c_1}(\ell_0,\nn)\frac{\ell_k^3}{w(\ell_k^3+\ell_0)}\frac{1}{k^{1/2}},
\end{multline}
as, in view of (\ref{supinfv1'}), we have 
$$\sum_{\ell_k^1=\max([{k}/\nn],\ell_k^3)}^{{k}-2\ell_k^3}\frac{1}{w(\ell_k^1+\ell_0)}\le \frac{1}{k^{1/2}}\sum_{\ell_k^1=\max([{k}/\nn],\ell_k^3)}^{{k}-2\ell_k^3}\frac{(\ell_k^1)^{1/2}}{w(\ell_k^1+\ell_0)}<\tilde{c_1}(\ell_0,\nn)\frac{1}{k^{1/2}}.$$
A similar reasoning gives for the term from (\ref{3vbc}) an upper bound of $\frac{\tilde{c_1}'(\ell_0,\nn)}{{{k}}^{1/2}}\sum_{i=\ell_k^3}^{{k}-\ell_k^3}\frac{1}{w(i+\ell_0)},$ where $\tilde{c_1}'(\ell_0,\nn)>0$. Thus, from Steps 1 and 2 we have
\begin{equation*}
{\mathbb{P}}^{\GG}(R_k^{3}=\ell_k^{3})\le C(w,\nn, \ell_0)\left[\frac{(\ell_k^3)^{1/2}}{w(\ell_k^3+\ell_0)}+ \frac{1}{{{k}}^{1/2}}\sum_{i=\ell_k^3}^{{k}-\ell_k^3}\frac{1}{w(i+\ell_0)}\right],
\end{equation*}
for some $C(w,\nn,\ell_0)>0$ which depends only on $w,\nn$ and $\ell_0$. Given the above bounds, the proof follows now the same arguments as the proof of Theorem \ref{mainvertfin} and will be omitted.

\vspace{0.5mm} 

(b)  Under assumption (\ref{bipbip}), we have in (\ref{sum1vbp1}) for each fixed $[k/2]-1\le r\le [k/2]+1$
\begin{eqnarray}
\lefteqn{\frac{1}{w(\ell_k^3+\ell_0)}\sum_{y^1=0}^{\min(r,\ell_k^3)}\frac{y^1}{w(y^1+\ell_0)}Q_{|U_2|-2}(r-y^1;\ell_0;\infty)}\nonumber\\
&\le&\frac{C(w)}{w(\ell_k^3+\ell_0)}\sum_{y^1=0}^{\min(r,\ell_k^3)}Q_{|U_2|-2}(r-y^1;\ell_0;\infty)\le\frac{C(\ell_0,w,\nn)}{w(\ell_k^3+\ell_0)},
\end{eqnarray}
for some $C(w), C(\ell_0,w,\nn)>0$, and where for the first inequality we used (\ref{bipbip}) and for second we used (\ref{rec2}). Arguing similarly for the other terms in (\ref{sum1vbp1}), we have for some $C_2(\ell_0,w,\nn)>0$
$$\sum_{(\ell_k^1,\ell_k^2,\ell_k^3,\ldots, \ell_k^\nn)\in {\cal{L}}_{\nn,k}(\ell_k^3)}\frac{1}
{\prod_{i=3}^{\nn} w\left(\ell_k^i+\ell_0\right)}\le \frac{C_2(\ell_0,w,\nn)}{w(\ell_k^3+\ell_0)}.$$
The above is enough to prove the statement for non-decreasing $w$ satisfying (\ref{bipbip}), since in this case the first term in (\ref{eqmainverta}) is an upper bound for each of the other four terms in (\ref{eqmainverta}); for more general $w$, we can reason as in Step 1 above, to bound the quantities in Steps 2 and 3 of Proposition \ref{2orderstatv}.
\end{proof}
We will next extend the result from Lemma \ref{specialcase} (a) to the more general case of triangle-free graphs.
 
 \begin{Lemma}
\label{triangle-free case}
Let $\Lambda$ be a finite triangle-free graph with $|\Lambda|=\nn\ge 4$. If $w$ satisfies
\begin{equation}
\label{supinfv1''}
\max_{v\in V(\GG)}\sum _{i\geq 1}^{\infty}\frac{i^{1/2}}{w(i+\ell_0^v)} < \infty,
\end{equation}
then the walk traverses exactly two random neighbouring attracting vertices at all large times a.s..
\end{Lemma}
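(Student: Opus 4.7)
The plan is to mirror the proof of Lemma \ref{specialcase} (a), replacing the bipartite partition by a local decomposition around the two most visited vertices. First, I would reduce the infinite case to the finite one via the VRRW analogue of Lemma \ref{Lem:Stuck_n_distance} (namely the Lemma \ref{Lem:Stuck_n_distance_vertex_a} used in Subsection 3.2 for the general result), so that it suffices to prove the analog of Lemma \ref{specialcase} (a) for a finite triangle-free $\Lambda$.

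The key combinatorial input, replacing the bipartite identity (\ref{bipartite}), is the property that in a triangle-free graph two adjacent vertices have no common neighbour. Let $v^{(1)}, v^{(2)}$ attain $R_k^1, R_k^2$. Writing $X_k^v = \mathbf{1}_{I_0 = v} + \sum_{u \sim v} D_k^{u \to v}$, with $D_k^{u \to v}$ the number of directed $u \to v$ jumps and $|D_k^{u \to v} - D_k^{v \to u}| \le 1$, I would derive in the case $v^{(1)} \sim v^{(2)}$
\[
|R_k^1 - R_k^2| \le 2 + \sum_{u \in \NN(v^{(1)}) \setminus \{v^{(2)}\}} X_k^u + \sum_{u \in \NN(v^{(2)}) \setminus \{v^{(1)}\}} X_k^u \le 2 + (k - R_k^1 - R_k^2) + (\nn - 2)\ell_0,
\]
using disjointness of the two neighbourhoods (a consequence of triangle-freeness) in the last step, so that $|R_k^1 - R_k^2| = O(\ell_k^3)$ whenever $R_k^3 = \ell_k^3$. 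If instead $v^{(1)} \not\sim v^{(2)}$, then $v^{(2)} \notin \NN(v^{(1)})$, forcing $R_k^1 \le d_{v^{(1)}}(R_k^3 + \ell_0) + 1 = O(\ell_k^3)$ (and similarly for $R_k^2$). In both cases, for $R_k^1 + R_k^2 = s$ fixed, the pair $(R_k^1, R_k^2)$ varies over $O(\ell_k^3)$ admissible values.

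Armed with this combinatorial control, I would revisit the five sums in (\ref{eqmainverta}) and adapt Steps 1--3 of Lemma \ref{specialcase} (a), with the bipartite classes $U_1, U_2$ replaced by $\{v^{(1)}\} \cup (\NN(v^{(1)}) \setminus \{v^{(2)}\})$ and $\{v^{(2)}\} \cup (\NN(v^{(2)}) \setminus \{v^{(1)}\})$, and with vertices of $\Lambda$ outside this neighbourhood treated as excursion terms. After careful decoupling of the product terms, I expect the bound
\[
\mathbb{P}^{\GG}(R_k^3 = \ell_k^3) \le C \left[\frac{(\ell_k^3)^{1/2}}{w(\ell_k^3 + \ell_0)} + \frac{1}{k^{1/2}} \sum_{i = \ell_k^3}^{k - \ell_k^3} \frac{1}{w(i + \ell_0)}\right],
\]
where the factor $(\ell_k^3)^{1/2}$ arises exactly as in Step 1 of Lemma \ref{specialcase} (a) from $\sum_{y = 0}^{\ell_k^3} y/w(y + \ell_0) \le (\ell_k^3)^{1/2} \sum_{y \ge 1} y^{1/2}/w(y + \ell_0) < \infty$ via hypothesis (\ref{supinfv1''}). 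Applying Lemma \ref{gexistence} with $p_\ell = \ell^{1/2}/w(\ell + \ell_0)$ and running the monotone convergence argument of Theorem \ref{mainvertfin} would then yield $\mathbb{E}^{\GG}(g(R_\infty^3)) < \infty$ for a suitable increasing $g$ with $\lim g = \infty$, hence $R_\infty^3 < \infty$ a.s.; since the walk moves along edges, this forces attraction to two neighbouring vertices.

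The hardest part will be the decoupling of the product terms. While the triangle-free property yields the same local combinatorial bound $|R_k^1 - R_k^2| = O(\ell_k^3)$ as in the bipartite case, the absence of a global partition of $V(\Lambda)$ means that vertices outside $\NN(v^{(1)}) \cup \NN(v^{(2)}) \cup \{v^{(1)}, v^{(2)}\}$ must be handled by separate excursion estimates rather than being automatically confined to one of two partition classes, as they are in the sum manipulations (\ref{sum1vbpah})--(\ref{sum1vb1}) of Lemma \ref{specialcase} (a). Identifying the right summable variable playing the role of $y^1$ in (\ref{sum1vpb2}) of the bipartite proof -- presumably the largest visit count in $\NN(v^{(1)}) \cup \NN(v^{(2)}) \setminus \{v^{(1)}, v^{(2)}\}$ -- is the technical crux.
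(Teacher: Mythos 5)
Your overall strategy — reduce to a finite triangle-free graph via Lemma~\ref{Lem:Stuck_n_distance_vertex_a}, then establish a combinatorial count of admissible $(R_k^1,R_k^2)$ pairs and feed it into the five sums of (\ref{eqmainverta}) — matches the paper's. However, the specific combinatorial bound you derive is too weak, and this is a genuine gap rather than a technicality you could patch with excursion estimates.

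Concretely, your inequality
\[
|R_k^1 - R_k^2| \;\le\; 2 + (k - R_k^1 - R_k^2) + (\nn-2)\ell_0 \;=\; 2 + \ell_k^3 + \sum_{i=4}^{\nn}R_k^i + (\nn-2)\ell_0
\]
gives, for fixed $s=R_k^1+R_k^2$, an admissible range of $(R_k^1,R_k^2)$ of size $O(\ell_k^3 + \ell_k^4)$. The $\ell_k^3$ part is fatal: $\ell_k^3$ is the \emph{fixed} parameter of the event $\{R_k^3=\ell_k^3\}$, not a summation variable, so it survives to a term $\ell_k^3/w(\ell_k^3+\ell_0)$ in the bound on $\mathbb{P}^{\GG}(R_k^3=\ell_k^3)$. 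The monotone-convergence argument with Lemma~\ref{gexistence} would then require $\sum_\ell \ell/w(\ell+\ell_0)<\infty$, which is the stronger hypothesis (\ref{maxfinv}), not (\ref{supinfv1''}). Your tentative fix — taking $y^1$ to be ``the largest visit count in $\NN(v^{(1)})\cup\NN(v^{(2)})\setminus\{v^{(1)},v^{(2)}\}$'' — has the same flaw: if $v^{(3)}$ lies in that neighbourhood (which is exactly the generic case), this $y^1$ equals $\ell_k^3$, again a fixed parameter.

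The paper avoids this by a sharper two-case analysis on adjacency of $v^{(3)}$ to $v^{(1)}$ and $v^{(2)}$, not on adjacency of $v^{(1)}$ to $v^{(2)}$. If at least one of $v^{(1)},v^{(2)}$ — say $v^{(1)}$ — is not adjacent to $v^{(3)}$, then the steps incident to $v^{(1)}$ and the steps incident to $v^{(3)}$ are disjoint, giving $2R_k^1 + 2R_k^3 - 1 \le k$, i.e.\ an \emph{upper} bound $R_k^1 \le [(k+1)/2]-\ell_k^3$ whose $\ell_k^3$-dependence exactly cancels against the lower bound $R_k^1 \ge [k/2]-\ell_k^3-(\nn-3)\ell_k^4$. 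If instead both are adjacent to $v^{(3)}$, then by triangle-freeness $v^{(1)}\not\sim v^{(2)}$, and the disjointness argument applied to $v^{(1)},v^{(2)}$ pins $R_k^1+R_k^2$ to an interval of length $O(\ell_k^4)$. Either way the count of admissible $(R_k^1,R_k^2)$ pairs is $\le 1+(\nn-3)\ell_k^4$ with \emph{no} $\ell_k^3$ term, and $\ell_k^4$ is a genuine summation variable, so $\sum_{\ell_k^4=0}^{\ell_k^3}\ell_k^4/w(\ell_k^4+\ell_0)\le(\ell_k^3)^{1/2}\sum_y y^{1/2}/w(y+\ell_0)$ closes the argument under (\ref{supinfv1''}). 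A further point: once this count is in hand, the proof does \emph{not} need the product-decoupling machinery of Lemma~\ref{specialcase}(a); the first sum in (\ref{eqmainverta}) is bounded directly by inserting the factor $\ell_k^4$ into the $Q_{\nn-4}$ estimate, and Steps 2--3 of Proposition~\ref{2orderstatv} handle the rest verbatim. Your plan to reconstruct the bipartite decoupling over-complicates what is actually a shorter argument.
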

\begin{proof}

\noindent  \textbf{Step 1:} For any three vertices $a,b,c\in V(\Lambda)$, if $a$ and $b$ are nearest neighbours, then $c$ can be nearest neighbour to at most one of $a$ and $b$ or else $a, b$ and $c$ would form a triangle.  Let $v_i, 1\le i\le \nn,$ be the $i$-th most visited vertex in $\Lambda$ by time $k$.

\noindent Fix $0\le R_k^3\le [k/3]$. There are now two cases to consider:

\noindent \textit{Case 1:} At least one of $v_1$ and $v_2$, say $v_1$, is not nearest neighbour to $v_3$.
As there are no visits between $v_1$ and $v_3$, we have $2 (X_k^{v_1}-\ell_0)+2R_k^3-1\le k$ and $k-R_k^3=\sum_{i\neq 3} R_k^i= (X_k^{v_1}-\ell_0)+(X_k^{v_2}-\ell_0)+\sum_{i=4}^\nn R_k^i\le  (X_k^{v_1}-\ell_0)+(X_k^{v_1}-\ell_0+\sum_{i=3}^\nn R_k^i)+\sum_{i=4}^\nn R_k^i.$ 
Thus, from the two inequalities combined we get
\begin{equation}
\label{case1aa}
[{k}/2]-R_k^3-(\nn-3)R_k^4\le X_k^{v_1}-\ell_0\le [({k}+1)/2]-R_k^3,
\end{equation}
which inequality we will use below if $[k/2]-R_k^3> (\nn-3)R_k^4$. Otherwise, we will use
\begin{equation}
\label{case1bb}
R_k^3\le X_k^{v_1}-\ell_0\le [({k}+1)/2]\le 1+R_k^3+(\nn-3)R_k^4.
\end{equation}
Putting (\ref{case1aa}) and (\ref{case1bb}) together gives that for each fixed $R_k^1+R_k^2$, the number of  $(R_k^1,R_k^2)$ terms is smaller or equal than $1+(\nn-3)R_k^4$.

\vspace{0.5mm}

\noindent\textit{Case 2:} Both $v_1$ and $v_2$ are nearest neighbours to $v_3$. Then there are no visits between $v_1$ and $v_2$, and we have $2 (X_k^{v_1}-\ell_0)+2(X_k^{v_2}-\ell_0)-1 \le k$,
from which it follows that
\begin{equation}
\label{case2aa}
2R_k^3\le (X_k^{v_1}-\ell_0)+(X_k^{v_2}-\ell_0)\le [({k}+1)/2].
\end{equation}
Moreover, we observe that since
$$k=(X_k^{v_1}-\ell_0)+(X_k^{v_2}-\ell_0)+\sum_{i=3}^\nn R_k^i\le  1+\sum_{i=3}^\nn R_k^i+\sum_{i=3}^\nn R_k^i\le 1+2R_k^3+2(\nn-3)R_k^4,$$ 
we have $[(k-1)/2]\le R_k^3+ (\nn-3)R_k^4.$ Combining this with (\ref{case2aa}), we obtain
$$R_k^3\le R_k^1+R_k^2\le 1+R_k^3+(\nn-3)R_k^4.$$
This gives again as in Case 1 that for each fixed $R_k^1+R_k^2$, the number of  $(R_k^1,R_k^2)$ terms is smaller or equal than $1+(\nn-3)R_k^4$. 

By applying the above bound in the first sum from (\ref{eqmainverta}), and with the same notation, we get 

\begin{eqnarray}
\label{sum1vb}
\lefteqn{\sum_{(\ell_k^1,\ell_k^2,\ell_k^4,\ldots, \ell_k^\nn)\in {\cal{L}}_{\nn,k}(\ell_k^3)}\frac{1}
{\prod_{i=3}^{\nn} w\left(\ell_k^i+\ell_0\right)}}\nonumber\\
&\le&\frac{\nn}{w(\ell_k^3+\ell_0)}\sum_{\ell_k^1+\ell_k^2=2\ell_k^3}^{{k}-\ell_k^3}\,\,\,\,\sum_{(\ell_k^4,\ldots, \ell_k^\nn)\in {\mathbb{N}}^{\nn-3}: \,0\le \ell_k^\nn\le\ldots\le\ell_k^3\atop \sum_{i=4}^{\nn}\ell_k^i=k-\ell_k^1-\ell_k^2-\ell_k^3}\frac{\ell_k^4}
{\prod_{i=4}^{\nn} w\left(\ell_k^i+\ell_0\right)}\nonumber\\
&\le& \frac{\nn}{w(\ell_k^3+\ell_0)}\sum_{s=2\ell_k^3}^{{k}-\ell_k^3}\,\,\,\,\sum_{\ell_k^4=0}^{\min(\ell_k^3, k-s-\ell_k^3)}\frac{\ell_k^4}{w(\ell_k^4+\ell_0)}\sum_{(\ell_k^5,\ldots, \ell_k^\nn)\in {\mathbb{N}}^{\nn-4}: \,0\le \ell_k^\nn\le\ldots\le\ell_k^4\atop \sum_{i=5}^{\nn}\ell_k^i=k-s-\ell_k^3-\ell_k^4}\frac{1}
{\prod_{i=5}^{\nn} w\left(\ell_k^i+\ell_0\right)}\nonumber\\
&\le&\frac{\nn}{w(\ell_k^3+\ell_0)}\sum_{s=2\ell_k^3}^{{k}-\ell_k^3}\,\,\,\sum_{\ell_k^4=0}^{\min(\ell_k^3, k-s-\ell_k^3)}\frac{\ell_k^4}{w(\ell_k^4+\ell_0)}\,\,Q_{\nn-4}(k-s-\ell_k^3-\ell_k^4;\ell_0;\infty).
\end{eqnarray}
For the first inequality in (\ref{sum1vb}), we used that for each fixed $\ell_k^1+\ell_k^2=s$, there are less than $\nn\ell_k^4$ terms $(\ell_k^1,\ell_k^2)$ to be summed over. For the second inequality, we split the inner sum from the first inequality into a double sum, first after $\ell_k^4$ and then after the remaining $(\ell_k^5,\ldots, \ell_k^\nn)$, given $\ell_k^4$; we also used here that $\sum_{i=5}^{\nn}\ell_k^i=k-s-\ell_k^3-\ell_k^4\ge 0$ and that $0\le\ell_k^4\le\ell_k^3$ to obtain $0\le\ell_k^4\le\min(\ell_k^3,k-s-\ell_k^3)$. For the last inequality we applied the definition of $Q_{\nn-4}$ from (\ref{eqndefq}).

By expanding (\ref{sum1vb}) further and recalling (\ref{minineq}), we get for $0\le\ell_k^3\le [k/4]$ (equivalent to $2\ell_k^3\le k-2\ell_k^3$)
\begin{eqnarray}
\label{sum1vb1}
\lefteqn{\sum_{(\ell_k^1,\ell_k^2,\ell_k^4,\ldots, \ell_k^\nn)\in {\cal{L}}_{\nn,k}(\ell_k^3)}\frac{1}
{\prod_{i=3}^{\nn} w\left(\ell_k^i+\ell_0\right)}}\nonumber\\
&\le&\frac{\nn}{w(\ell_k^3+\ell_0)}\bigg[\sum_{s=2\ell_k^3}^{{k}-2\ell_k^3}\,\,\sum_{\ell_k^4=0}^{\ell_k^3}\frac{\ell_k^4}{w(\ell_k^4+\ell_0)}\,\,Q_{\nn-4}(k-s-\ell_k^3-\ell_k^4;\ell_0;\infty)\nonumber\\
&&\,\,\,\,\,\,\,\,\,\,\,\,\,\,\,\,\,\,\,\,\,\,\,\,\,\,\,\,\,\,\,\,\,\,\,\,+\sum_{s=k-2\ell_k^3}^{k-\ell_k^3}\sum_{\ell_k^4=0}^{k-\ell_k^3-s}\frac{\ell_k^4}{w(\ell_k^4+\ell_0)}\,\,Q_{\nn-4}(k-s-\ell_k^3-\ell_k^4;\ell_0;\infty)\bigg]\nonumber\\
&\le&\frac{\nn}{w(\ell_k^3+\ell_0)}\!\!\sum_{\ell_k^4=0}^{\ell_k^3}\frac{\ell_k^4}{w(\ell_k^4+\ell_0)}\bigg[\!\sum_{s=2\ell_k^3}^{k-2\ell_k^3}Q_{\nn-4}(k-s-\ell_k^3-\ell_k^4;\ell_0;\infty)\!+\!\!\!\!\!\!\!\sum_{s=k-2\ell_k^3}^{{k}-\ell_k^3-\ell_k^4}Q_{\nn-4}(k-s-\ell_k^3-\ell_k^4;\ell_0;\infty)\bigg]\nonumber\\
&\le&\frac{2\nn c(\ell_0)^{\nn-4}}{w(\ell_k^3+\ell_0)}\sum_{\ell_k^4=0}^{\ell_k^3}\frac{\ell_k^4}{w(\ell_k^4+\ell_0)}\le\frac{2\nn c(\ell_0)^{\nn-4}(\ell_k^3)^{1/2}}{w(\ell_k^3+\ell_0)}\sum_{\ell_k^4=0}^{\ell_k^3}\frac{(\ell_k^4)^{1/2}}{w(\ell_k^4+\ell_0)}\le c'(\nn,\ell_0)\frac{(\ell_k^3)^{1/2}}{w(\ell_k^3+\ell_0)},
\end{eqnarray}
where we recall that $c(\ell_0)=\sum_{i=0}^\infty 1/w(i+\ell_0)$ and $c'(\nn,\ell_0)>0$. For the first inequality in (\ref{sum1vb1}), we used for the first inner sum that $\min(\ell_k^3, k-s-\ell_k^3)=\ell_k^3$ since $s\le k-2\ell_k^3$, and for the second inner sum $\min(\ell_k^3, k-s-\ell_k^3)=k-s-\ell_k^3$ since $s\ge k-2\ell_k^3$. For the second inequality we changed the summation order between $s$ and $\ell_k^4$, for the third we used (\ref{rec2}), and for the last inequality we used (\ref{supinfv1''}). We can argue similarly for $[k/4]\le\ell_k^3\le [k/3]$, when only the sum with $k-2\ell_k^3\le s\le k-\ell_k^3$ is possible, to obtain again an inequality as in (\ref{sum1vb1}).

The remaining four sums from (\ref{eqmainverta}) can be bounded similarly as in Step 2 from Lemma \ref{specialcase} (a).
\end{proof}


\subsection{Attraction sets on infinite connected graphs of bounded degree}
\label{InfGraphvert}

In this section we will prove Theorem \ref{maininfvert}. To begin with, we will state a result on the probability that the walk ever visits more than $n$ vertices. More precisely, we have

 \begin{Lemma}\label{Lem:Stuck_n_distance_vertex_a}
Let $\GG$ be an infinite connected graph of bounded degree. If $w$ satisfies (\ref{initialweightv}), then we have
\begin{equation}\label{Eq:Stuck_k_distance_vertex_a}
\lim_{n\rightarrow\infty}{\mathbb{P}}^{\GG}\bigg(\displaystyle \sup_{k \geq 1} \norm{I_k} > n\bigg)=0.
\end{equation} 
\end{Lemma}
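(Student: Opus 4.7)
The plan is to adapt the strategy of Lemmas \ref{Lem:Stuck_new_edge} and \ref{Lem:Stuck_n_distance} to the vertex-reinforced setting. For each $n\ge 1$, I introduce the stopping time $T_{V_n}$ and define the event $B_{V_n}$ that, at time $T_{V_n}$, the walk arrives at some $v\in V_n$ from a neighbour $u$ and subsequently oscillates forever between $v$ and $u$. Once a uniform positive lower bound
$$\mathbb{P}^{\GG}\bigl(B_{V_n}\mid T_{V_n}<\infty\bigr)\ge p>0$$
has been established, the inductive argument used in the proof of Lemma \ref{Lem:Stuck_n_distance} applies verbatim to give $\mathbb{P}^{\GG}\bigl(\sup_{k\ge 1}\norm{I_k}>n\bigr)\le(1-p)^{\lfloor n/2\rfloor}$, which is actually stronger than the stated conclusion \eqref{Eq:Stuck_k_distance_vertex_a}.

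The key step is therefore the uniform lower bound on $\mathbb{P}^{\GG}\bigl(B_{V_n}\mid T_{V_n}<\infty\bigr)$. Conditionally on $\{T_{V_n}=k,\,I_k=v,\,I_{k-1}=u\}$, the probability that the walk oscillates between $u$ and $v$ for all times $\ge k$ equals
\begin{equation*}
\prod_{j=0}^{\infty} \frac{w(X_k^u+j)}{w(X_k^u+j)+S_v}\cdot\frac{w(X_k^v+j+1)}{w(X_k^v+j+1)+S_u},
\end{equation*}
where $S_v:=\sum_{y\sim v,\,y\ne u}w(X_k^y)$ and $S_u:=\sum_{y\sim u,\,y\ne v}w(X_k^y)$. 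By the inequality $1+x\le e^x$ this product is bounded below by $\exp\bigl(-(S_u+S_v)\sum_{j\ge 0}1/w(\min(X_k^u,X_k^v)+j)\bigr)$, and the summability hypothesis on $w$ (furnished by \eqref{supinfv}, or at minimum by \eqref{Azerov}, which is implicit in the global hypotheses under which this lemma is applied in Theorem \ref{maininfvert}) renders this positive \emph{provided $S_u+S_v$ is uniformly bounded}.

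The main obstacle, absent in the ERRW case, is precisely this uniform control of $S_u$ and $S_v$: unlike in the edge-reinforced setting, a vertex neighbouring $v$ may already have been visited many times via another path, so $w(X_k^y)$ can be arbitrarily large. I propose to circumvent this by refining $B_{V_n}$ to additionally require that the walk arrive at $V_n$ along an edge $\{u,v\}$ for which, at time $T_{V_n}$, no neighbour of $u$ or $v$ other than $u$ and $v$ themselves has yet been visited; then $S_u+S_v\le 2D(\GG)\sup_{v'\in V(\GG)}w(\ell_0^{v'})<\infty$ by \eqref{initialweightv}. It then remains to show that this refined event still has positive conditional probability given $T_{V_n}<\infty$, uniformly in $n$; this can be done by adapting the ``fresh neighbourhood'' construction from the proof of Lemma \ref{Lem:Stuck_new_edge}, namely by showing that the walk reaches $V_n$ along a geodesic of previously unvisited vertices with probability bounded below independently of $n$, using a product-type estimate analogous to \eqref{Eq: Stuck_new_edge_v}.

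With this strengthened event in hand, the final step replicates the induction of Lemma \ref{Lem:Stuck_n_distance}: conditioning on whether or not $B_{V_{n-1}}$ (refined as above) occurs produces the recursion $\mathbb{P}^{\GG}\bigl(\sup_{k\ge 1}\norm{I_k}>n\bigr)\le(1-p)\,\mathbb{P}^{\GG}\bigl(\sup_{k\ge 1}\norm{I_k}>n-2\bigr)$, from which \eqref{Eq:Stuck_k_distance_vertex_a} (and in fact geometric decay) follows immediately.
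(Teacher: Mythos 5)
Your approach diverges substantially from the paper's. The paper disposes of this lemma in a single sentence by invoking Theorem~5.1 of Benaim, Raimond and Schapira \cite{BRS}, which proves (via a Rubin construction / time-change argument) that super-linear VRRW on a bounded-degree graph is a.s.\ trapped in a finite subgraph; the lemma follows at once. You instead try to build a VRRW analogue of Lemmas~\ref{Lem:Stuck_new_edge} and \ref{Lem:Stuck_n_distance}, and you correctly identify the obstruction that makes a verbatim transfer impossible: when the walk first reaches $v\in V_n$, the \emph{edges} incident to $v$ are automatically fresh (this is what the ERRW proof exploits), but the \emph{vertices} adjacent to $v$ need not be. That diagnosis is sound. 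Unfortunately the fix you propose does not close the gap.

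First, the refined event is empty for $n\ge 2$. You require that at time $T_{V_n}$ no neighbour of $u$ or $v$ other than $u,v$ themselves has been visited. But $T_{V_n}\ge T_n\ge n\ge 2$, so the walk was at some vertex $u'=I_{T_{V_n}-2}$ at time $T_{V_n}-2$; since the walk only moves along edges, $u'\sim u$, and $u'\notin\{u,v\}$ (as $v$ is unvisited before $T_{V_n}$), yet $u'$ has been visited. So as written your refined $B_{V_n}$ has probability zero and the recursion is vacuous.

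Second, even if you soften the condition (allow the vertices on the arrival path to have been visited once), the bound on $S_u$ would involve terms of the form $w(\ell_0^{u'}+1)$, and (\ref{initialweightv}) controls $\sup_v w(\ell_0^v)$ but not $\sup_v w(\ell_0^v+1)$; you would need an extra hypothesis.

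Third, and most seriously, the uniform lower bound ${\mathbb P}^{\GG}(B_{V_n}\mid T_{V_n}<\infty)\ge p$ for your refined event is not justified and I do not think it is true. In the ERRW setting the ``freshness'' of the relevant edges is a \emph{deterministic consequence} of the event $\{T_{V_n}=k\}$; nothing further has to be conditioned on, which is why (\ref{Eq: Stuck_new_edge_v}) gives a bound that is uniform over the history. In the VRRW setting, freshness of the neighbourhood of $u$ is a nontrivial event about the walk's path up to time $T_{V_n}$, and conditioning on $\{T_{V_n}<\infty\}$ gives no control over it: it is entirely possible for the walk, conditionally on ever reaching $V_n$, to almost always arrive along a ``dirty'' path whose nearby vertices already carry large weight. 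Saying that one can show ``the walk reaches $V_n$ along a geodesic of previously unvisited vertices with probability bounded below independently of $n$'' is precisely the content that is missing, and I see no analogue of (\ref{Eq: Stuck_new_edge_v}) that would deliver it, because that estimate only yields a bound on a single decision to get stuck, not on an $n$-step journey that must avoid all detours. In short, the inductive recursion of Lemma~\ref{Lem:Stuck_n_distance} does not survive the passage to VRRW with your modifications, and some genuinely new input --- in the paper's case, the Rubin-construction trapping result of \cite{BRS} --- is needed.

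(A small computational remark: in the displayed oscillation product, the vertex-weight index in the second factor should start at $X_k^v+j$, not $X_k^v+j+1$, since $v$ is not revisited until the walk returns from $u$; this is harmless but worth fixing.)
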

\begin{proof}
The proof of this statement is just a simple application of Theorem 5.1 in \cite{BRS} for super-linear VRRW with initial weights $\ell_0^v\in\mathbb{N},v\in V(\GG)$, which theorem shows by means of a Rubin construction argument that with probability 1 the walk gets attracted to a finite graph. This result can be easily adapted to the more general case of initial weights $\ell_0^v\ge 0,v\in V(\GG),$ and weight functions satisfying (\ref{Azerov}) and (\ref{initialweightv}).
\end{proof}

\textbf{Proof of Theorem \ref{maininfvert}}

Given Theorem \ref{mainvertfin} and Lemma \ref{Lem:Stuck_n_distance_vertex_a}, the proof of Theorem \ref{maininfvert} follows by the same arguments as the proof of Theorem \ref{maininfedge} and will be omitted.

\qed

\textbf{Proof of Corollary \ref{zd}}

Given Lemmas \ref{specialcase} and \ref{Lem:Stuck_n_distance_vertex_a}, the proof of Corollary \ref{zd} follows by the same arguments as the proof of Theorem \ref{maininfedge} and will be omitted.

\qed

\textbf{Proof of Corollary \ref{tfg}}

Given Lemmas \ref{triangle-free case} and \ref{Lem:Stuck_n_distance_vertex_a}, the proof of Corollary \ref{tfg} follows by the same arguments as the proof of Theorem \ref{maininfedge} and will be omitted.

\qed

\section*{Appendix}

\begin{Lemma}
\label{gexistence}
Let $\left(p_l\right)_{l \geq 1}$ be a non-negative sequence of real numbers such that
\begin{equation}\label{Eq:Sum}
\displaystyle\sum_{l\geq 1}^{\infty}p_l<\infty. 
\end{equation}
Then there exists a function $g\colon \bN \rightarrow \left[0, \infty\right)$ such that 
\begin{enumerate}
\item [(i)] $g\left(\cdot\right)$ is increasing and $\textstyle \lim_{l \uparrow \infty}g\left(l\right)=\infty$.
\item [(ii)] $\textstyle M:=\sum _{l\geq 1}^{\infty}g\left(l\right)p_l < \infty$. 
\end{enumerate}
\end{Lemma}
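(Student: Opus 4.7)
The plan is to construct $g$ explicitly from the tails of the convergent series $\sum_{l\ge 1} p_l$. First I would define $T_n := \sum_{l\ge n} p_l$. By \eqref{Eq:Sum}, $T_n$ is finite, non-increasing in $n$, and satisfies $T_n \downarrow 0$. If there exists $n_0$ with $T_{n_0}=0$, then $p_l=0$ for every $l\ge n_0$, and any increasing unbounded $g$ (e.g. $g(l)=l$) vacuously gives a finite sum, so I may assume $T_n>0$ for all $n\ge 1$. I then set
\[
g(n) \,:=\, \frac{1}{\sqrt{T_n}}.
\]
Since $T_n$ is strictly positive and non-increasing, $g$ is non-decreasing, and $T_n\to 0$ forces $g(n)\to\infty$; this verifies property~(i).

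For property~(ii), I would write $p_l = T_l - T_{l+1}$ and use the elementary identity
\[
T_l - T_{l+1} \,=\, \bigl(\sqrt{T_l}-\sqrt{T_{l+1}}\bigr)\bigl(\sqrt{T_l}+\sqrt{T_{l+1}}\bigr) \,\le\, 2\sqrt{T_l}\bigl(\sqrt{T_l}-\sqrt{T_{l+1}}\bigr),
\]
which is valid because $T_{l+1}\le T_l$. Dividing by $\sqrt{T_l}$ gives $g(l)p_l \le 2(\sqrt{T_l}-\sqrt{T_{l+1}})$, and summing the telescoping bound yields
\[
M \,=\, \sum_{l=1}^{\infty} g(l) p_l \,\le\, 2\sum_{l=1}^{\infty}\bigl(\sqrt{T_l}-\sqrt{T_{l+1}}\bigr) \,=\, 2\sqrt{T_1} \,=\, 2\sqrt{\textstyle\sum_{l\ge 1}p_l} \,<\, \infty,
\]
as required.

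There is no real obstacle: once one guesses the definition $g(n)=1/\sqrt{T_n}$, the entire argument reduces to a one-line telescoping estimate. The only minor point worth noting is that the construction gives $g$ non-decreasing rather than strictly increasing; if strict monotonicity is desired, one can replace $g$ by $\tilde g(n) := g(n) + (1 - 1/n)$, which is strictly increasing, still tends to infinity, and whose contribution to the sum is bounded by $\sum_{l\ge 1} p_l = T_1 < \infty$.
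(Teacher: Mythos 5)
Your proof is correct, and it takes a genuinely different route from the paper's. The paper picks indices $n_l$ with $\sum_{n\ge n_l}p_n<2^{-l}$, groups the integers into blocks $N_m=[n_{N^m},n_{N^{m+1}}-1)$, and defines $g$ as the step function equal to $2^m$ on the $m$-th block; the block sizes are tuned so that the blockwise contributions $2^m\sum_{l\in N_m}p_l$ are summable. You instead set $g(n)=1/\sqrt{T_n}$ with $T_n=\sum_{l\ge n}p_l$ and run the classical telescoping estimate $p_l/\sqrt{T_l}\le 2(\sqrt{T_l}-\sqrt{T_{l+1}})$, giving the explicit bound $M\le 2\sqrt{T_1}$. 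Your construction is cleaner and more explicit, and it also adapts immediately to the uniform version (Lemma \ref{gexistencem}) by taking $T_n=\max_{1\le i\le n}\sum_{l\ge n}p_l^i$, just as the paper's does. Both constructions yield a non-decreasing (not strictly increasing) $g$, which is all the paper's own proof achieves, so your closing remark about restoring strict monotonicity is unnecessary for matching the intended conclusion but is correct as stated. Two tiny nits: the treatment of the case $T_{n_0}=0$ is right but ``vacuously'' is a slight misnomer (the sum is finite because it has only finitely many nonzero terms, and any choice of $g$ makes it finite), and you should note explicitly that $T_n\to 0$ follows from convergence of the series, though this is standard.
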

\begin{proof} Since $\textstyle\sum_{l\geq 1}^{\infty}p_l<\infty,$ there exists an increasing sequence $\left(n_l\right)_{l \geq 1}$ such that 
\begin{equation}\label{Eq:Choiceseq}
\displaystyle\sum_{n \geq n_l}^{\infty}p_n <\frac{1}{2^l}.
\end{equation}
Choose $N\in \bN$ large enough. Let $N_m:=\left[n_{N^m}, n_{N^{\left(m+1\right)}}-1\right)\bigcap \bN$ for every $m \in \bN$. Define $g\colon \bN \rightarrow\left[0, \infty\right)$ by
\begin{equation}\label{Eq: def_g}
 g\left(l\right)=\begin{cases}
2^{m}, &\text{for }l \in N_m \\
1, & \text{for } l \leq n_N.
\end{cases} 
\end{equation}
It is easy to see that 
\begin{equation*}\label{Eq: Sum_g}
\displaystyle \sum_{l \in N_m}g\left(l\right)p_l =2^m \displaystyle \sum_{l \in N_m}p_l < \frac{1}{2^{N^{m}-m}}.
\end{equation*}
Therefore
\begin{equation*}
\displaystyle \sum_{l \geq 1}g\left(l\right)p_l< \infty. 
\end{equation*}
This completes the proof.
\end{proof}

The proof in the above Lemma can be easily extended to the following more general setting
\begin{Lemma}
\label{gexistencem}
Fix $n\ge 2$ and let $\left(p_l^i\right)_{l \geq 1}, 1\le i\le n,$ be non-negative sequences of real numbers such that for all $1\le i\le n$, we have
\begin{equation}\label{Eq:Sum}
\displaystyle\sum_{l\geq 1}^{\infty}p_l^i<\infty. 
\end{equation}
Then there exists a function $g\colon \bN \rightarrow \left[0, \infty\right)$ such that 
\begin{enumerate}
\item [(i)] $g\left(\cdot\right)$ is increasing and $\textstyle \lim_{l \uparrow \infty}g\left(l\right)=\infty$.
\item [(ii)] $\textstyle M:=\sup_{1\le i\le n}\sum _{l\geq 1}^{\infty}g\left(l\right)p^i_l < \infty$. 
\end{enumerate}
\end{Lemma}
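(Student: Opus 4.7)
The plan is to reduce Lemma \ref{gexistencem} to (the proof of) Lemma \ref{gexistence} by producing a single increasing sequence that simultaneously controls the tails of all $n$ summable sequences, and then applying the same construction of $g$ verbatim.

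First, for each $1\le i\le n$ and each $l\ge 1$, since $\sum_{k\ge 1} p_k^i<\infty$, choose $n_l^i\in\mathbb{N}$ such that $\sum_{k\ge n_l^i} p_k^i<1/2^l$. Define
\[
m_l:=\max\left(l,\,n_l^1,\,n_l^2,\,\ldots,\,n_l^n,\,m_{l-1}+1\right),
\]
with $m_0:=0$. Then $(m_l)_{l\ge 1}$ is a strictly increasing sequence of positive integers, and by construction, crucially using that $n$ is \emph{finite}, we have for every $1\le i\le n$ and every $l\ge 1$
\[
\sum_{k\ge m_l} p_k^i \;\le\; \sum_{k\ge n_l^i} p_k^i \;<\; \frac{1}{2^l}.
\]

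Now repeat the construction from Lemma \ref{gexistence} with $(n_l)$ replaced by $(m_l)$. Fix $N\in\mathbb{N}$ large enough (for instance $N=2$), let $N_m:=[m_{N^m},\,m_{N^{m+1}}-1)\cap\mathbb{N}$ for $m\ge 1$, and define $g\colon\mathbb{N}\to[0,\infty)$ by
\[
g(l):=\begin{cases} 2^m, & l\in N_m,\\ 1, & l\le m_N.\end{cases}
\]
Then $g$ is non-decreasing, diverges to infinity, and for every $1\le i\le n$ and every $m\ge 1$,
\[
\sum_{l\in N_m} g(l)\,p_l^i \;=\; 2^m\sum_{l\in N_m} p_l^i \;\le\; 2^m\sum_{l\ge m_{N^m}} p_l^i \;<\; \frac{1}{2^{N^m-m}}.
\]

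Summing over $m\ge 1$ and adding the finite contribution from $l\le m_N$, we obtain
\[
\sum_{l\ge 1} g(l)\,p_l^i \;\le\; \sum_{l\le m_N} p_l^i \;+\; \sum_{m\ge 1}\frac{1}{2^{N^m-m}}\;<\;\infty,
\]
and the right-hand side is a bound independent of $i$ because it only depends on the common sequence $(m_l)$ and on the uniform tail bound. Taking the supremum over $i\in\{1,\ldots,n\}$ yields property (ii), and property (i) is immediate from the definition of $g$. There is no genuine obstacle here: the only substantive point is observing that, because $n$ is finite, the maximum $m_l=\max_i n_l^i$ preserves summability of each tail with the uniform bound $1/2^l$, after which the original argument applies word for word.
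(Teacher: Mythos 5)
Your proposal is correct and is essentially the argument the paper has in mind: the paper merely remarks that the proof of Lemma \ref{gexistence} ``can be easily extended,'' and you carry out that extension in the natural way, by taking $m_l=\max_i n_l^i$ (with strict monotonicity enforced) to get a single cutoff sequence whose tails are uniformly bounded by $2^{-l}$ for all $n$ sequences at once, after which the construction of $g$ goes through verbatim. The only tiny imprecision is the remark that the final bound is ``independent of $i$''---the head term $\sum_{l\le m_N}p_l^i$ does depend on $i$---but since $n$ is finite, taking the supremum, as you do in the next clause, still gives a finite $M$.
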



\subsection*{Acknowledgments}

C.C. thanks David Brydges for numerous comments and suggestions which greatly improved the presentation of the manuscript, and Tyler Helmuth and Richard Pymar for fruitful discussions during her work on the paper. C.C. also thanks Olivier Raimond for suggesting that she can use the order statistics method to extend the result in Corollary \ref{zd} (a) to general triangle-free graphs.

We warmly thank an anonymous referee for a very careful reading of the paper and for an absolutely outstanding refereeing job.

\end{document}